\newtheorem{remark}{Remark}[section]
\def\lam{{\lambda}}
\def\Ome{{\Omega}}
\def\del{{\delta}}
\def\Del{{\Delta}}
\def\nab{{\nabla}}
\def\vepsi{{\varepsilon}}
\def\p{{\partial}}
\def\reff#1{\eqref{#1}}
\def\norm#1#2{\Vert\,#1\,\Vert_{#2}}
\def\hnorm#1#2{\vert\,#1\,\vert_{#2}}
\def\vepsi{\varepsilon}
\def\Div{{\rm div }}
\def\cA{{\mathcal A}}
\def\cB{{\mathcal B}}
\def\cH{{\mathcal H}}
\def\cT{{\mathcal T}}
\def\cM{{\mathcal M}}
\def\cL{{\mathcal L}}
\def\Tc{{\cal T}}
\def\p{{\partial}}
\def\nab{\nabla}
\def\Ome{\Omega}
\def\lam{\lambda}
\def\Del{\Delta}
\def\bbf{\mathbf{f}}
\def\bu{\mathbf{u}}
\def\bv{\mathbf{v}}
\def\bw{\mathbf{w}}
\def\bg{\mathbf{g}}
\def\bn{\mathbf{n}}
\def\bI{\mathbf{I}}
\def\bV{\mathbf{V}}
\def\bR{\mathbf{R}}
\def\bU{\mathbf{U}}
\def\bV{\mathbf{V}}
\def\bY{\mathcal{Y}}
\begin{document}
\title{On $p$-harmonic map heat flows for {$1\leq p< \infty$} and their 
finite element approximations}

\author{
John W. Barrett\thanks{Department of Mathematics, Imperial College London,
London, SW7 2AZ, U.K. (j.barrett@ic.ac.uk).}
\and
Xiaobing Feng\thanks{Department of Mathematics, The University of
Tennessee, Knoxville, TN 37996, U.S.A. (xfeng@math.utk.edu). The work of this 
author was partially supported by the NSF grant DMS-0410266.}
\and
Andreas Prohl\thanks{Mathematisches Institut, Universit\"at T\"ubingen,
Auf der Morgenstelle 10, D-72076 T\"ubingen, Germany.
(prohl@na.uni-tuebingen.de)}
}

\maketitle

\begin{abstract}
Motivated by emerging applications from imaging processing, the
heat flow of a generalized $p$-harmonic map into spheres is
studied for the whole spectrum, $1\leq p<\infty$, in a unified
framework. The existence of global weak solutions is established
for the flow using the energy method together with a
regularization and a penalization technique. In particular, 
a $BV$-solution concept is introduced and the existence 
of such a solution is proved for the $1$-harmonic map heat flow.
The main idea used to develop such a theory is to exploit the 
properties of measures of the forms $\cA\cdot\nab\bv$ 
and $\cA\wedge\nab\bv$; which pair a divergence-$L^1$, or a 
divergence-measure, tensor field $\cA$, and a
$BV$-vector field $\bv$. Based on these analytical results, a practical 
fully discrete finite element method is then proposed for approximating 
weak solutions of the $p$-harmonic map heat flow, and the convergence 
of the proposed numerical method is also established. 
\end{abstract}

\begin{keywords}
$p$-harmonic maps, heat flow, penalization, energy method,
color image denoising, finite element method
\end{keywords}

\begin{AMS}
35K65, 
58E20, 
35Q80, 
65M12, 
65M60, 
\end{AMS}

\pagestyle{myheadings}
\thispagestyle{plain}
\markboth{JOHN W. BARRETT AND XIAOBING FENG AND ANDREAS PROHL}
{HEAT FLOW OF $p$-HARMONIC MAPS}

\section{Introduction}\label{sec-1}

Let $\Omega \subset \mathbf{R}^m$  be a bounded domain with smooth
boundary $\p\Ome$, and $S^{n-1}$ denote the unit sphere in $\mathbf{R}^{n}$.
A map ${\bu}\in C^1(\Omega, S^{n-1})$ is called a {\em $p$-harmonic map}
if it is a critical point of the following $p$-energy
\begin{equation}\label{e1.1}
E_p({\bv}) := \frac{1}{p} \int_\Omega \hnorm{\nabla {\bv}}{}^p\,
{\rm d}{x}, \qquad 1\leq p < \infty.
\end{equation}

It is well-known, \cite{CHH,FR1,M1}, that the Euler-Lagrange equation
of the $p$-energy is
\begin{equation}\label{e1.2}
-\Del_p\bu = |\nab \bu|^p \bu ,
\end{equation}
where
\begin{equation}\label{e1.2a}
\Del_p\bu:=\Div(|\nab \bu|^{p-2} \nab \bu).
\end{equation}
Note that $\Del_p$ is often called the $p$-Laplacian. It is easy
to see that equation \reff{e1.2} is a {\em degenerate elliptic}
equation for $p>2$, and a {\em singular elliptic} equation for
$1\leq p< 2$. These degeneracy and singular characteristics
both disappear when $p=2$.

We call a map ${\bu}\in W^{1,p}(\Omega, S^{n-1})$ a {\em weakly $p$-harmonic map}
if ${\bu}$ satisfies equation \reff{e1.2} in the distributional sense.
Here $W^{1,p}(\Omega, S^{n-1})$ denotes the Sobolev space
\[
W^{1,p}(\Omega, S^{n-1}):=\left\{ \bu\in W^{1,p}(\Omega,\mathbf{R}^{n});\,
\bu(x)\in S^{n-1}\mbox{ for a.e. } x\in \Ome \right\}.
\]
One well-known method for looking for a weakly $p$-harmonic map is
the homotopy method (or the gradient descent method), which then
leads to considering the following gradient flow (or heat flow)
for the $p$-energy functional $E_p$
\begin{alignat}{2}\label{e1.3}
\bu_t - \Del_p \bu &= |\nabla {\bu}|^p {\bu}
&&\qquad\mbox{in }\Ome_T:=\Ome\times (0,T), \\
|\bu| &=1 &&\qquad \mbox{in }\Ome_T, \label{e1.4}
\end{alignat}
complemented with some given boundary and initial conditions.
Clearly, equation \reff{e1.3} is a {\em degenerate parabolic}
equation for $p>2$, and a {\em singular parabolic} equation for
$1\leq p< 2$. Again, these degeneracy and singular characteristics both
disappear when $p=2$.

We remark that $p$-harmonic maps and weakly $p$-harmonic maps
between two Riemannian manifolds $(M, g)$ and $(N, h)$, and their
heat flows can be defined in the same fashion (cf. \cite{FR1,S1}).
In this paper, we shall only consider the case $M=\Ome$ and $ N=S^{n-1}$,
which is sufficient for the applications that we are interested in.

The $p$-harmonic map and its heat flow, in particular, the
harmonic map and the harmonic map heat flow ($p=2$), have been
extensively studied in the past twenty years for $1<p<\infty$ (cf.
\cite{C,CHH,CS,CD,ES,FR1,FR2,HL,Hu1,Hu2,Hu3,Liu1,Liu2,M1,M2,S1,S2} 
for $1<p<\infty$; \cite{GMS1,GMS2} for
$p=1$). In the case when the target manifold is a sphere, the
existence of a global weak solution for the harmonic flow was first
proved by Chen in \cite{C} using a penalization technique. The
result and the penalization technique were extended to the
$p$-harmonic flow for $p>2$ by Chen, Hong and Hungerb\"uhler in
\cite{CHH}. The $p$-harmonic flow for $1<p<2$ was solved by Misawa
in \cite{M2} using a time discretization technique (the method of
Rothe) proposed in \cite{Hu2}, and by Liu in \cite{Liu2} using a
penalization technique similar to that of \cite{CHH}. The
$p$-harmonic flow ($1<p<\infty$) from a unit ball in
$\mathbf{R}^m$ into $S^1\subset \mathbf{R}^2$ was studied by
Courilleau and Demengel in \cite{CD}. In the case of general
target manifolds the $p$-harmonic flow ($1<p<\infty$) with small
initial data was treated by Fardoun and Regbaoui in \cite{FR2},
and the conformal case of the $p$-harmonic flow was considered 
by Hungerb\"uhler in \cite{Hu1a}. 
Nonuniqueness of the $p$-harmonic flow was addressed in
\cite{Co,Hu1}. Recently, Giga {\em et al.} \cite{GKY04} 
proved existence of strong local solutions for $1$-harmonic
map heat flow using nonlinear semigroup theory.
Besides the great amount of mathematical interests in the
$p$-harmonic map and the $p$-harmonic flow, research on these problems
has been strongly motivated by applications of the harmonic map
and its heat flow in liquid crystals and micromagnetism, we refer
to \cite{BBH,B,EK,GH,LL,L,Virga} and the references therein for
detailed expositions in this direction.

Another reason, which is the main motivation of this paper, for
studying the $p$-harmonic map and its heat flow, in particular,
for $1\leq p <2$, arises from their emerging and intriguing
applications to image processing for denoising color images (cf.
\cite{TSC,VO}). We recall that a color image is often expressed
by the RGB color system, in which a vector ${\bf I}(x)=(r(x),
g(x), b(x))$ for each pixel $x=(x_1,x_2)$ is used to represent the
intensity of the three primary colors (red, green, and blue). The
chromaticity and brightness of a color image are deduced from the
RGB system by decomposing ${\bf I}(x)$ into two components
\begin{eqnarray*}
\eta(x) &:=& \hnorm{{\bf I}(x)}{}, \qquad\mbox{(brightness)}\\
{\bg} &:=& \frac{ {\bf I}(x)}{\hnorm{{\bf I}(x)}{}}, \qquad\mbox{(chromaticity)}
\end{eqnarray*}
where $|\bI(x)|$ stands for the Euclidean norm of $\bI(x)$.
By definition, the chromaticity must lie on the unit sphere $S^2$.
One of the main benefits of the chromaticity and brightness decomposition
is that it allows one to denoise $\eta$ and ${\bg}$ separately using different
methods. For example, one may denoise $\eta$ by the well-known total
variation (TV) model of Rudin-Osher-Fatemi \cite{ROF} (also see \cite{FP}),
but denoise ${\bg}$ by another model. One such model is to define the
recovered chromaticity ${\bu}$ as a (generalized) $p$-harmonic
map \cite{TSC,VO}
\begin{equation}\label{e1.6}
\bu = \underset{ \bv \in W^{1,p}(\Omega;S^2)} {\rm argmin}
J_{p,\lam}({\bv})  \qquad\mbox{for } p\geq 1,
\end{equation}
where
\begin{eqnarray}\label{e1.7}
J_{p,\lam}({\bv})&:=& E_p(\bv)+\frac{\lam}2\int_\Ome |\bv-\bg|^2\, dx
\qquad\mbox{for } \lam > 0.
\end{eqnarray}
In particular, the cases $1 \leq p < 2$ are the most important and
interesting since the recovered images keep geometric information such
as edges and corners of the noisy color images. We shall call \reff{e1.6}
the {\em $p$-harmonic model} for color image denoising. We also remark that the
second term on the right-hand side of \reff{e1.7} is often called a fidelity
term. As in the TV model \cite{ROF,FP}, the parameter $\lam$ controls
the trade-off between goodness of fit-to-the-data and variability in $\bu$.

Again, to find a solution for the $p$-harmonic map model \reff{e1.6}, 
we consider the gradient flow (heat flow) for the energy
functional $J_{p,\lam}$, which is given by
\begin{alignat}{2}\label{e1.8}
\bu_t - \Del_p \bu + \lam (\bu-\bg) &= \mu_{p,\lam} {\bu}
&&\qquad\mbox{in }\Ome_T, \\
|\bu| &=1 &&\qquad \mbox{in }\Ome_T, \label{e1.9} \\
\cB_p\bn &=0 &&\qquad\mbox{on } \p\Ome_T:=\p\Ome\times (0,T),
\label{e1.10}\\
\bu &=\bu_0 &&\qquad\mbox{on } \Ome\times \{t=0\},  \label{e1.11}
\end{alignat}
where
\begin{equation}\label{e1.11a}
\cB_p:=|\nab \bu|^{p-2} \nab\bu,\qquad \mu_{p,\lam}:=|\nabla {\bu}|^p + \lam\,(1-\bu\cdot\bg).
\end{equation}

The goals of this paper are twofold. Firstly, we shall present a general theory
of weak solutions for the parabolic system
\reff{e1.8}-\reff{e1.11} for the whole spectrum $1\leq p<\infty$.
To the best of our knowledge, there is no theory known in the
literature for the $1$-harmonic map and its heat flow, which on
the other hand is the most important (and most difficult) case for
the color image denoising application. So our theory and result
fill this void. Furthermore, our theory handles the $p$-harmonic
map heat flow \reff{e1.8}-\reff{e1.11} for all $1\leq p<\infty$ in
a unified fashion, rather than treating the system separately for
different values of $p$ and using different methods (cf. 
\cite{C,CHH,CS,CD,ES,FR1,FR2,HL,Hu2,Hu3,M2,S1,S2}). Secondly,
based on the above theoretical work, we also develop and analyze
a practical fully discrete finite element method for approximating
the solutions of the $p$-harmonic map heat flow
\reff{e1.8}-\reff{e1.11}.

We now highlight the main ideas and key steps of our approach.
Notice that there are two nonlinear terms in the $p$-harmonic
flow: the $p$-Laplace term and the right-hand side due to the
nonconvex constraint $|\bu|=1$, so the main difficulties are how
to handle these two terms and how to pass to the limit in these
two terms when a compactness argument is employed.  To handle the
degeneracy of the $p$-Laplace term, we approximate the $p$-energy
$E_p(\bv)$ by the following regularized energy
\begin{eqnarray}\label{e1.12}
E_p^\vepsi(\bv)&:=&\frac{b_p(\vepsi)}{2}\int_\Ome |\nab \bv|^2\, dx
+\frac{1}{p} \int_\Ome |\nab \bv|_\vepsi^p\, dx \\
&=&\int_\Ome \Bigl\{ \frac{b_p(\vepsi)}{2} |\nab \bv|^2 +\frac{1}{p}
\bigl[\,|\nab \bv|^2+a_p(\vepsi)^2\,\bigr]^{\frac{p}2}\,\Bigr\} dx,
\nonumber
\end{eqnarray}
where $\vepsi>0$ and
\begin{equation}\label{e1.12a}
a_p(\vepsi):=\left\{ \begin{array}{ll}
                    0 &\quad\mbox{if } 2\leq p<\infty,\\
                    \vepsi &\quad\mbox{if }  1\leq p< 2,
                      \end{array} \right.
\qquad
b_p(\vepsi):=\vepsi^\alpha \quad\mbox{for } 1\leq p< \infty
\end{equation}
for some $\alpha>0$. Here and in the rest of this paper we adopt the
shorthand notation
\begin{equation}\label{e1.13}
|\nab \bv|_\vepsi:=\sqrt{|\nab \bv|^2+a_p(\vepsi)^2}.
\end{equation}

To handle the nonconvex constraint $|\bu|=1$, we approximate it by
the well-known Ginzburg-Landau penalization \cite{BBH}, that is,
we abandon the exact constraint, but enforce it approximately by
adding a penalization term to the regularized $p$-energy
$E_p^\vepsi$. To this end, we replace the energy $E_p^\vepsi$ by
\begin{equation}\label{e1.14}
E_p^{\vepsi,\delta}(\bv):= E_p^\vepsi(\bv) + L^\delta(\bv)\qquad\mbox{for }
\vepsi,\delta>0,
\end{equation}
where
\begin{eqnarray}\label{e1.15}
L^\delta(\bv):=\frac{1}{\delta}\int_\Ome F(\bv)\, dx, \qquad
F(\bv):=\frac{1}{4} \bigl(\,|\bv|^2-1\, \bigr)^2 \qquad
\forall\delta>0,\,\bv\in \mathbf{R}^n.
\end{eqnarray}
So the idea is, as $\delta$ gets smaller and smaller, the energy
functional $E_p^{\vepsi,\delta}$ becomes more and more
favorable for maps $\bu$ which take values close to the unit sphere $S^{n-1}$.

Consequently, the regularized model for the $p$-harmonic
model \reff{e1.6} (with general $m$ and $n$) reads
\begin{equation}\label{e1.16}
\bu^{\vepsi,\del}=\underset{\bv \in W^{1,p}(\Omega;\mathbf{R}^{n})}
{\rm argmin} J^{\vepsi,\del}_{p,\lam}({\bv})  \qquad\mbox{for } p\geq 1,
\end{equation}
where
\begin{eqnarray}\label{e1.17}
J^{\vepsi,\del}_{p,\lam}({\bv})&:=& E^{\vepsi,\del}_p(\bv)
+\frac{\lam}2\int_\Ome |\bv-\bg|^2\, dx .
\end{eqnarray}
In addition, the gradient flow for the regularized energy functional
$J^{\vepsi,\del}_{p,\lam}$ is given by
\begin{alignat}{2}\label{e1.18}
\bu^{\vepsi,\del}_t - \Del_p^\vepsi \bu^{\vepsi,\del}
+\frac{1}{\del}\bigl(|\bu^{\vepsi,\del}|^2-1\bigr)
\bu^{\vepsi,\del} + \lam (\bu^{\vepsi,\del}-\bg)
&=0 &&\qquad\mbox{in }\Ome_T, \\
\cB_p^{\vepsi,\del} \bn &=0 &&\qquad\mbox{on } \p\Ome_T,
\label{e1.19}\\
\bu^{\vepsi,\del} &=\bu_0
&&\qquad\mbox{on } \Ome\times \{t=0\}, \label{e1.20}
\end{alignat}
which is an approximation to the original flow
\reff{e1.8}-\reff{e1.11}, where
\begin{equation}\label{e1.21}
\cB_p^{\vepsi,\delta}:= b_p(\vepsi)\nab \bu^{\vepsi,\delta} +
|\nab\bu^{\vepsi,\delta}|_\vepsi^{p-2}\nab \bu^{\vepsi,\delta},\qquad
\Del_p^\vepsi\bu^{\vepsi,\delta}:=\Div \cB_p^{\vepsi,\delta}.
\end{equation}

After having introduced the regularized flow \reff{e1.18}-\reff{e1.20},
the next step is to analyze this regularized flow, in particular, to
derive uniform (in $\vepsi$ and $\del$) a priori estimates. Finally, we
pass to the limit in \reff{e1.18}-\reff{e1.20}, first letting
$\del\rightarrow 0$ and then setting
$\vepsi\rightarrow 0$. As pointed out earlier, the main difficulty
here is passing to the limit in two nonlinear terms on the left-hand
side of \reff{e1.18}. For $1<p<\infty$, this will be done
using a compactness technique and exploiting the symmetries of
the unit sphere $S^{n-1}$, as done in \cite{C,CHH,CS,FR1,Hu1,M2,S1}.
However, for $p=1$, since $L^1(\Ome)$ is not a reflexive Banach space, instead
of working in the Sobolev space $W^{1,1}(\Ome)$, we are forced to work
in $BV(\Ome)$, the space of functions of bounded variation, since
solutions of the original $1$-harmonic map heat flow
\reff{e1.8}-\reff{e1.11} only belong to  $L^\infty((0,T);[BV(\Ome)]^n)$
in general. This lack of regularity makes the analysis for $p=1$ much more
delicate than that for $1<p<\infty$.

We note also that the regularized flow \reff{e1.18}-\reff{e1.20}
not only plays an important role for proving existence of weak
solutions for the flow \reff{e1.8}-\reff{e1.11}, but also provides
a practical and convenient formulation for approximating the
solutions. The second goal of this paper is to develop a
practical fully discrete finite element method for approximating
solutions of the regularized flow \reff{e1.18}-\reff{e1.20};
and hence, approximating solutions of the original $p$-harmonic flow
\reff{e1.8}-\reff{e1.11} via the regularized flow. It is well-known 
that explicitly enforcing the nonconvex constraint $|\bu|=1$
at the discrete level is hard to achieve. The penalization used in
\reff{e1.18} allows one to get around this numerical difficulty at
the expense of introducing an additional scale $\del$. As
expected, the numerical difficulty now is to control the
dependence of the regularized solutions on $\del$ and to establish
scaling laws which relate the numerical scales (spatial and
temporal mesh sizes) to the penalization scale $\del$ for both
stability and accuracy concerns. We refer to \cite{BFP1} and the
reference therein for more discussions in this direction and
discussions on a related problem arising from liquid crystal
applications. Borrowing a terminology from the phase transition of
materials science, the regularized flow \reff{e1.18}-\reff{e1.20}
may be regarded as a ``diffuse interface" model for the original
``sharp interface" model \reff{e1.8}-\reff{e1.11}, and the
``diffuse interface" is represented by the region $\{
|\bu^{\vepsi,\del}|< 1-\del\}$.

The remaining part of the paper is organized as follows.  In
Section \ref{sec-2} we collect some known results and facts, which
will be used in the later sections. In Section \ref{sec-3} we
present a complete analysis for the regularized flow
\reff{e1.18}-\reff{e1.20}, which includes proving its
well-posedness, an energy law, a maximum principle, and uniform
(in $\vepsi$ and $\del$) a priori estimates. As expected, these
uniform a priori estimates serve as the basis for carrying out the
energy method and the compactness arguments in the later sections.
In Section \ref{sec-4} we pass to the limit in
\reff{e1.18}-\reff{e1.20} as $\del\rightarrow 0$ for each {\em
fixed} $\vepsi>0$. As in \cite{CS,CHH,Hu1,M2}, the main idea is to
exploit the monotonicity of the operator $\Del^\vepsi_p$ and the
symmetries of the unit sphere $S^{n-1}$. Sections
\ref{sec-5}-\ref{sec-6} are devoted to passing to the limit as
$\vepsi\rightarrow 0$ in the $\vepsi$-dependent limiting system
obtained in Section \ref{sec-4}. For $1<p<\infty$, this will be
done by following the idea of Section \ref{sec-4}. However, for
$p=1$ the analysis becomes much more delicate because the
nonreflexivity of $W^{1,1}(\Ome)$ forces us to work in the
$BV(\Ome)$ space. The main idea used to develop a $BV$
solution concept is to exploit the properties of measures 
of the forms $\cA\cdot\nab\bv$ and $\cA\wedge\nab\bv$; which 
pair a divergence-$L^1$, or a divergence-measure, tensor 
field $\cA$, and a
$BV$-vector field $\bv$. Finally, based on the theoretical 
results of Sections \ref{sec-3}-\ref{sec-6}, in Section \ref{sec-7} we 
propose and analyze a practical fully discrete finite element method 
for approximating solutions of the $p$-harmonic flow \reff{e1.8}-\reff{e1.11} 
via the regularized flow \reff{e1.18}-\reff{e1.20}. It is proved that
the proposed numerical scheme satisfies a discrete energy inequality, 
which mimics the
differential energy inequality, and this leads to uniform (in
$\vepsi$ and $\del$) a priori estimates and the convergence of the
numerical approximations to the solutions of the flow
\reff{e1.8}-\reff{e1.11} as the spatial and temporal mesh sizes,
and the parameters $\vepsi$ and $\del$, all tend to zero.

\section{Preliminaries}\label{sec-2}

The standard notation for spaces is adopted in this paper (cf.
\cite{adams75,AFP}). For example,
$W^{k,p}(\Ome)$, $k\geq 0$ integer and $1\leq p<\infty$, denotes
the Sobolev spaces over the domain $\Ome$ and $\|\cdot\|_{W^{k,p}}$
denotes its norm.
$W^{0,p}(\Ome)=L^p(\Ome)$ and $W^{k,2}(\Ome)=H^k(\Ome)$ are also used.
$L^q((0,T); W^{k,p}(\Ome, \mathbf{R}^n))$ denotes the space of 
vector-valued functions (or maps), whose $W^{k,p}(\Ome)$-norm is
$L^q$-integrable as a function of $t$ over the interval $(0,T)$,
and $\|\cdot\|_{L^q(W^{k,p})}:=(\int_0^T\|\cdot\|_{W^{k,p}}^q dt)^{\frac{1}{q}}$,
for $q \in [1,\infty)$,
denotes its norm; and with the standard modification for $q= \infty$.  
We use $(\cdot,\cdot)$ to denote the
standard inner product in $L^2(\Ome)$, and $\langle\cdot,\cdot\rangle$ 
to denote a generic dual product between elements of a Banach space $X$ 
and its dual space $X'$.

In addition, $\mathcal{M}(\Ome)$ (resp. $[\mathcal{M}(\Ome)]^n$) denotes
the space of real-valued (resp. $\mathbf{R}^n$-valued) finite
Radon measures on $\Ome$. Recall that $\mathcal{M}(\Ome)$ is the 
dual space of $C_0(\Ome)$ (cf. \cite{AFP}). For a positive non-Lebesgue 
measure $\mu\in \mathcal{M}(\Ome)$, $L^p(\Ome, \mu)$ is used to denote the space
of $L^p$-integrable functions with respect to the measure $\mu$,
and for $f\in L^1(\Ome, \mu)$, the measure $f\mu$ is defined by
\[
f\mu(A):=\int_A\, f\,d\mu\qquad\mbox{for any Borel set } A\subset \Ome.
\]
For any $\mu\in \mathcal{M}(\Ome)$, $\mu=\mu^a+\mu^s$ denotes its
Radon-Nikod\'ym decomposition, where $\mu^a$ and $\mu^s$ respectively
denotes the absolute continuous part and the singular part of $\mu$ 
with respect to the Lebesgue measure $\cL^n$. 

In addition, $BV(\Ome)$ is used to denote the space of functions
of bounded variation. Recall that a function $u\in L^1(\Ome)$ is
called a function of {\em bounded variation} if all of its first order
partial derivatives (in the distributional sense) are measures with
finite total variations in $\Ome$. Hence, the gradient of
such a function $u$, denoted by $D u$, is a bounded
vector-valued measure, with the finite total variation
\begin{equation}\label{e2.1}
|D u|\equiv |Du|(\Ome):
=\sup\Bigl\{\, \int_\Ome -u\, \Div{\,\mathbf v}\, dx\, ; \,\,
{\mathbf v}\in [C^1_0(\Ome)]^n,\, \norm{{\mathbf v}}{L^\infty} \leq 1 \,
\Bigr\}\, .
\end{equation}
$BV(\Ome)$ is known to be a Banach space endowed with the norm
\begin{equation}\label{e2.2}
\norm{u}{BV}:= \norm{u}{L^1} + |D u|.
\end{equation}
For any $u\in BV(\Ome)$, $(Du)^a$ and $|D u|^a$ are used to denote
respectively the absolute continuous part of $Du$ and $|D u|$ 
with respect to the Lebesgue measure $\cL^n$.
We refer to \cite{AFP} for detailed discussions about the
space $BV(\Ome)$ and properties of $BV$ functions.

For any vector $\mathbf{a}\in \mathbf{R}^n$ and any
matrices $\mathcal{A},\, \mathcal{B}\in \mathbf{R}^{n\times m}$ with
$j$th column vectors $\mathcal{A}^{(j)}$ and $\mathcal{B}^{(j)}$, respectively,
we define the following wedge products

\begin{align}\label{e2.3}
&\mathcal{A}\wedge \mathbf{a}:= [\mathcal{A}^{(1)}\wedge \mathbf{a}, 
\cdots, \mathcal{A}^{(m)}\wedge \mathbf{a}],  \qquad
\mathbf{a}\wedge \mathcal{A}:= [\mathbf{a}\wedge\mathcal{A}^{(1)},
\cdots, \mathbf{a}\wedge\mathcal{A}^{(m)} ]  \\
&\mathcal{A}\wedge \mathcal{B}:=\mathcal{A}^{(1)}\wedge \mathcal{B}^{(1)}
+\cdots + \mathcal{A}^{(m)}\wedge \mathcal{B}^{(m)}.
\label{e2.4}
\end{align}
We point out that \reff{e2.4} defines $\mathcal{A}\wedge \mathcal{B}$ to be a
vector instead of a matrix, which seems not to be natural.  However, we 
shall see in Section \ref{sec-6} that it turns out that this is a 
convenient and useful notation.

We conclude this section by citing some known results which will
be used in the later sections. The first result is known as `` the
decisive monotonicity trick", its proof can be found in \cite{Z}.

\begin{lemma}\label{lem2.1}
Let $X$ be a reflexive Banach space and $X'$ denote the dual space of $X$.
Suppose that an operator $\mathcal{F}: X\longrightarrow X'$ satisfies
\begin{itemize}
\item[{\rm (i)}] $\mathcal{F}$ is monotone on $X$, that is, $\langle
\mathcal{F}u-\mathcal{F}v,u-v\rangle
\geq 0$ for all $u,v\in X$;
\item[{\rm (ii)}] $\mathcal{F}$ is hemicontinuous, that is, the function
$t\mapsto \langle \mathcal{F}(u+tv),w\rangle $ is continuous on
$[0,1]$ for all $u, v, w\in X$.
\end{itemize}
In addition, suppose that $u_k\rightarrow u$ and $\mathcal{F}u_k\rightarrow f$
weakly in $X$ and $X'$, respectively, as $n\rightarrow \infty$, and
\[
\overline{\lim_{k\rightarrow \infty}} \langle \mathcal{F} u_k,u_k\rangle \leq
\langle f,u\rangle.
\]
Then
\[
\mathcal{F}u=f.
\]
\end{lemma}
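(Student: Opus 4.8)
The plan is to run the classical Minty monotonicity argument: monotonicity lets one trade the one-sided control on $\limsup_{k\to\infty}\langle \mathcal{F}u_k,u_k\rangle$ for a variational inequality, and hemicontinuity then upgrades that inequality to the desired identity $\mathcal{F}u=f$.

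First I would fix an arbitrary $v\in X$ and write down the monotonicity inequality $\langle \mathcal{F}u_k-\mathcal{F}v,\,u_k-v\rangle\ge 0$ from (i), expanded as
\[
\langle \mathcal{F}u_k,u_k\rangle-\langle \mathcal{F}u_k,v\rangle-\langle \mathcal{F}v,u_k\rangle+\langle \mathcal{F}v,v\rangle\ \ge\ 0 .
\]
Here $\langle \mathcal{F}u_k,v\rangle\to\langle f,v\rangle$ because $\mathcal{F}u_k\to f$ weakly in $X'$ and $v\in X$; $\langle \mathcal{F}v,u_k\rangle\to\langle \mathcal{F}v,u\rangle$ because $u_k\to u$ weakly in $X$ and $\mathcal{F}v\in X'$; and the last term does not depend on $k$. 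Passing to $\limsup_{k\to\infty}$ in the displayed inequality and using the hypothesis $\limsup_{k\to\infty}\langle \mathcal{F}u_k,u_k\rangle\le\langle f,u\rangle$, I would obtain
\[
\langle f,u\rangle-\langle f,v\rangle-\langle \mathcal{F}v,u\rangle+\langle \mathcal{F}v,v\rangle\ \ge\ 0 ,
\qquad\text{that is}\qquad \langle f-\mathcal{F}v,\,u-v\rangle\ \ge\ 0 \quad\text{for all } v\in X.
\]

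Next, given any $w\in X$, I would insert $v=u-tw$ with $t\in(0,1]$ into the last inequality; dividing by $t>0$ gives $\langle f-\mathcal{F}(u-tw),\,w\rangle\ge 0$, and letting $t\downarrow 0$, the hemicontinuity assumption (ii) yields $\langle \mathcal{F}(u-tw),w\rangle\to\langle \mathcal{F}u,w\rangle$, hence $\langle f-\mathcal{F}u,\,w\rangle\ge 0$ for every $w\in X$. Replacing $w$ by $-w$ turns this into an equality, whence $\mathcal{F}u=f$ in $X'$.

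The only step demanding care is the passage to the $\limsup$: it must be carried out termwise, pulling out the three genuinely convergent terms as ordinary limits \emph{before} invoking the one-sided estimate on $\langle \mathcal{F}u_k,u_k\rangle$, since $\limsup$ is merely subadditive. Beyond that there is no real obstacle; in particular, reflexivity of $X$ is not actually used in this deduction itself — it is the ingredient that, in the applications of the lemma, produces the weakly convergent sequences that are here taken as hypotheses.
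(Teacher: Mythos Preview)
Your argument is correct and is precisely the classical Minty monotonicity trick. The paper does not supply its own proof of this lemma but simply cites Zeidler~[Z], so there is nothing further to compare; your write-up is the standard proof one would find in that reference.
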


The second lemma is a compactness result, it can be proved following
the proof of Theorem 2.1 of \cite{CHH} (also see Theorem 3 of Chapter 4 of
\cite{Ev}) using the fact that the operator $\Del^\vepsi_p$ is uniformly
elliptic.

\begin{lemma}\label{lem2.2}
For $1\leq p< \infty$, let $p^*=\max\{2,p\}$. For a fixed $\vepsi>0$,
let $\{\bw^{\vepsi,\del}\}_{\del>0}$ be bounded in
$L^\infty((0,T);W^{1,p^*}(\Ome,\bR^n))$ and $\{\bbf^{\vepsi,\del}\}_{\del>0}$
be bounded in $L^1((0,T);L^1(\Ome,$ $\bR^n))$, both uniformly in $\del$.
Moreover, suppose that $\{\frac{\p \bw^{\vepsi,\del}}{\p t}\}_{\del>0}$ 
is bounded in $L^2((0,T);$ $L^2(\Ome,\bR^n))$ uniformly in $\del$ 
and $\bw^{\vepsi,\del}$
satisfies the following equation
\[
\frac{\p \bw^{\vepsi,\del}}{\p t} - \Del^\vepsi_p \bw^{\vepsi,\del}
=\bbf^{\vepsi,\del}\qquad \mbox{in } \Ome_T, \quad \del>0
\]
in the distributional sense. Then $\{\bw^{\vepsi,\del}\}_{\del>0}$
is precompact in $L^q((0,T);W^{1,q}(\Ome,\bR^n))$ for all $1\leq q <p^*$.
\end{lemma}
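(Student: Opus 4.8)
The plan is to follow the standard Aubin--Lions--Simon compactness argument, combined with the uniform ellipticity of $\Del^\vepsi_p$ and the structure of its principal part. First I would fix $\vepsi>0$ once and for all, so that the operator $\Del^\vepsi_p$ has a coercive, bounded, monotone principal part on $W^{1,2}(\Ome,\bR^n)$ with ellipticity constant at least $b_p(\vepsi)>0$; this is the observation that turns the problem into essentially a linear-parabolic-type compactness statement. The three uniform bounds in hand are: $\bw^{\vepsi,\del}$ bounded in $L^\infty((0,T);W^{1,p^*}(\Ome,\bR^n))$, $\p_t\bw^{\vepsi,\del}$ bounded in $L^2((0,T);L^2(\Ome,\bR^n))$, and the equation $\p_t\bw^{\vepsi,\del}-\Del^\vepsi_p\bw^{\vepsi,\del}=\bbf^{\vepsi,\del}$ holding distributionally with $\bbf^{\vepsi,\del}$ bounded in $L^1((0,T);L^1(\Ome,\bR^n))$.

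The key steps, in order, are as follows. (1) Extract a subsequence (not relabeled) with $\bw^{\vepsi,\del}\rightharpoonup\bw$ weakly-$*$ in $L^\infty((0,T);W^{1,p^*})$ and $\p_t\bw^{\vepsi,\del}\rightharpoonup\p_t\bw$ weakly in $L^2(L^2)$. (2) Apply the Aubin--Lions--Simon lemma with the triple $W^{1,p^*}(\Ome)\hookrightarrow\hookrightarrow L^{p^*}(\Ome)\hookrightarrow L^2(\Ome)$ (the first embedding compact by Rellich--Kondrachov, using boundedness of $\Ome$ and smoothness of $\p\Ome$), together with the uniform bound on the time derivative, to conclude $\bw^{\vepsi,\del}\to\bw$ strongly in $C([0,T];L^{p^*}(\Ome,\bR^n))$, hence in particular in $L^q((0,T);L^q(\Ome,\bR^n))$ for every finite $q$. (3) Upgrade this $L^q$-in-space-time strong convergence to strong convergence of the gradients in $L^q(W^{1,q})$ for $q<p^*$. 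This is where the elliptic structure enters: test the equation for the difference $\bw^{\vepsi,\del}-\bw^{\vepsi,\del'}$ (or against $\bw^{\vepsi,\del}-\bw$ after passing to the limit in the weak formulation) and use the monotonicity inequality for the $p^*$-Laplacian-type operator. Concretely, the quantity $\int_0^T\langle\Del^\vepsi_p\bw^{\vepsi,\del}-\Del^\vepsi_p\bw,\bw^{\vepsi,\del}-\bw\rangle\,dt$ controls $b_p(\vepsi)\|\nab(\bw^{\vepsi,\del}-\bw)\|_{L^2(L^2)}^2$ from below (by Cauchy--Schwarz on the linear $b_p(\vepsi)\nab\bv$ piece) plus a nonnegative monotone contribution from the $|\nab\bv|_\vepsi^{p-2}\nab\bv$ piece, while from the equation it equals $\int_0^T(\bbf^{\vepsi,\del},\bw^{\vepsi,\del}-\bw)\,dt-\int_0^T(\p_t\bw^{\vepsi,\del}-\p_t\bw,\bw^{\vepsi,\del}-\bw)\,dt$; the first term goes to zero because $\bw^{\vepsi,\del}-\bw\to0$ uniformly in space-time (bounded in $L^\infty(W^{1,p^*})$, hence in $L^\infty$ if $p^*>m$, or otherwise one argues by the strong $L^q$ convergence against the $L^1(L^1)$ bound on $\bbf^{\vepsi,\del}$), and the second term is handled by integration by parts in $t$ together with the strong convergence from step (2). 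This yields $\nab\bw^{\vepsi,\del}\to\nab\bw$ strongly in $L^2(L^2)$, and then interpolating with the uniform $L^\infty(L^{p^*})$ bound on the gradients gives strong convergence in $L^q(L^q)$ for all $q<p^*$, which together with step (2) is precisely precompactness in $L^q((0,T);W^{1,q}(\Ome,\bR^n))$.

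The main obstacle I anticipate is step (3): handling the term $\int_0^T(\bbf^{\vepsi,\del},\bw^{\vepsi,\del}-\bw)\,dt$ rigorously, since $\bbf^{\vepsi,\del}$ is only bounded in $L^1(L^1)$, which is not a reflexive space and does not pair nicely with weak convergence. The cleanest way around this is to exploit that $\bw^{\vepsi,\del}-\bw$ is small not merely weakly but in the strong $C([0,T];L^{p^*})$ topology established in step (2); one needs an honest $L^\infty$ or high-$L^q$ bound on $\bw^{\vepsi,\del}-\bw$ to absorb the $L^1$-mass of $\bbf^{\vepsi,\del}$, and if the Sobolev embedding $W^{1,p^*}\hookrightarrow L^\infty$ fails (i.e.\ $p^*\le m$) one instead splits $\bbf^{\vepsi,\del}$ via equi-integrability / truncation and uses the uniform $L^\infty(W^{1,p^*})$ bound to control the tail. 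A secondary technical point is justifying the integration by parts in time and the use of $\bw^{\vepsi,\del}-\bw$ as a test function, which requires the usual density/regularization argument but is routine given $\p_t\bw^{\vepsi,\del}\in L^2(L^2)$. As the excerpt indicates, this is all modeled on the proof of Theorem 2.1 of \cite{CHH} and Theorem 3 of Chapter 4 of \cite{Ev}, so I would organize the write-up to parallel those arguments, emphasizing only the modification needed to accommodate the $L^1(L^1)$ right-hand side and the $\vepsi$-regularized operator.
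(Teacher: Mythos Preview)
Your plan---Aubin--Lions--Simon for strong convergence of the functions, then testing the equation against $\bw^{\vepsi,\del}-\bw$ and using the monotonicity and uniform ellipticity of $\Del^\vepsi_p$ (via the $b_p(\vepsi)\Del$ term) to upgrade to strong gradient convergence, finishing with Vitali/interpolation against the $L^\infty(L^{p^*})$ gradient bound---is exactly the route the paper indicates: it does not give its own proof but defers to Theorem~2.1 of \cite{CHH} and Theorem~3, Chapter~4 of \cite{Ev}. Steps (1) and (2), and the monotonicity mechanism in step (3), are the correct ingredients.

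There is one genuine gap, in your fallback for the term $\int_0^T(\bbf^{\vepsi,\del},\bw^{\vepsi,\del}-\bw)\,dt$ when $p^*\le m$. A family merely bounded in $L^1((0,T);L^1(\Ome))$ is \emph{not} equi-integrable in general (concentrating bumps such as $\del^{-(m+1)}\chi_{[0,\del]^{m+1}}$ are bounded in $L^1$ but not uniformly integrable), so ``splitting $\bbf^{\vepsi,\del}$ via equi-integrability/truncation'' is not available, and strong $L^q$ convergence of $\bw^{\vepsi,\del}-\bw$ for finite $q$ cannot be paired against a bare $L^1$ bound on $\bbf^{\vepsi,\del}$. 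In \cite{CHH}, and in the only place this lemma is invoked in the present paper (the proof of Theorem~\ref{thm4.1}), this difficulty does not arise because one additionally has $\|\bw^{\vepsi,\del}\|_{L^\infty(\Ome_T)}\le 1$ from the maximum principle (Lemma~\ref{lem3.2}); with that bound one localizes by a cutoff $\phi\in C^\infty_c(\Ome_T)$, keeps $\phi(\bw^{\vepsi,\del}-\bw)$ uniformly bounded in $L^\infty$, and runs the Minty-type argument to obtain $(\cB^{\vepsi,\del}-\cB^\vepsi_p(\nab\bw))\cdot\nab(\bw^{\vepsi,\del}-\bw)\to 0$ in $L^1_{\rm loc}$, hence a.e.\ convergence of the gradients, after which Vitali's theorem and the uniform $L^{p^*}$ bound on $\nab\bw^{\vepsi,\del}$ give strong $L^q$ convergence for every $q<p^*$. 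You should either add the uniform $L^\infty$ bound on $\bw^{\vepsi,\del}$ as an explicit hypothesis in your write-up, or note clearly that it is what is actually used in \cite{CHH} and in the application, and replace the equi-integrability claim accordingly.
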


The last lemma is a variation of Lemma \ref{lem2.2}, its proof can
be found in \cite{Liu1} (also see Lemma 9 of \cite{M1}).

\begin{lemma}\label{lem2.3}
For $1< p< \infty$, let $\{\bw^{\vepsi}\}_{\vepsi>0}$ be bounded in
$L^\infty((0,T);W^{1,p}(\Ome,\bR^n))$ and $\{\bbf^{\vepsi}\}$
be bounded in $L^1((0,T);$ $L^1(\Ome,\bR^n))$, both uniformly in $\vepsi$.
Moreover, suppose that $\{\frac{\p \bw^{\vepsi}}{\p t}\}$ is bounded
uniformly in $\vepsi$ in $L^2((0,T);L^2(\Ome,\bR^n))$ and
$\bw^{\vepsi}$ satisfies the following equation
\[
\frac{\p \bw^{\vepsi}}{\p t} - \Del^\vepsi_p \bw^{\vepsi}
 =\bbf^{\vepsi}\qquad \mbox{in } \Ome_T, \quad \vepsi> 0
\]
in the distributional sense. Then $\{\bw^{\vepsi}\}_{\vepsi>0}$
is precompact in $L^q((0,T);W^{1,q}(\Ome,\bR^n))$ for all $1\leq q <p$.
\end{lemma}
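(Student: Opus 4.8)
The plan is to imitate the Aubin--Lions--Simon compactness argument as used in Lemma \ref{lem2.2}, but keeping the exponent $p$ (rather than $p^* = \max\{2,p\}$) because here the bound on $\{\bw^\vepsi\}$ is only in $L^\infty((0,T);W^{1,p})$. First I would fix $\vepsi > 0$ temporarily and record the three uniform bounds: $\bw^\vepsi$ bounded in $L^\infty((0,T);W^{1,p}(\Ome,\bR^n))$, $\p_t\bw^\vepsi$ bounded in $L^2((0,T);L^2(\Ome,\bR^n))$, and $\bbf^\vepsi$ bounded in $L^1((0,T);L^1(\Ome,\bR^n))$. Since $W^{1,p}(\Ome) \hookrightarrow L^p(\Ome)$ compactly (as $\p\Ome$ is smooth and $\Ome$ bounded) and $L^p(\Ome) \hookrightarrow L^2(\Ome)$ continuously when $p \geq 2$, or one embeds into a common larger space when $1 < p < 2$, the Aubin--Lions--Simon lemma gives that $\{\bw^\vepsi\}_{\vepsi>0}$ is precompact in $L^q((0,T);L^q(\Ome,\bR^n))$ for every $q < \infty$; in particular, after passing to a subsequence (not relabelled), $\bw^\vepsi \to \bw$ strongly in $L^q((0,T);L^q)$ and a.e.\ in $\Ome_T$, while $\bw^\vepsi \rightharpoonup \bw$ weakly-$*$ in $L^\infty((0,T);W^{1,p})$ and $\p_t\bw^\vepsi \rightharpoonup \p_t\bw$ weakly in $L^2((0,T);L^2)$.

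The heart of the matter is upgrading this to strong convergence of the gradients in $L^q((0,T);W^{1,q})$ for $q < p$, and this is where the uniform ellipticity of $\Del^\vepsi_p$ and the monotonicity trick enter. The key step is to test the equation for the difference $\bw^\vepsi - \bw^{\vepsi'}$ (or, more cleanly, to exploit the monotonicity of the operator $\cA \mapsto -\Div(b_p(\vepsi)\nab\cdot + |\nab\cdot|_\vepsi^{p-2}\nab\cdot)$) against $\bw^\vepsi - \bw$ integrated over $\Ome_T$. Using the equation,
\begin{equation}\label{eq-mono-lem23}
\int_0^T \langle -\Del^\vepsi_p\bw^\vepsi + \Del^\vepsi_p\bw,\, \bw^\vepsi - \bw\rangle\, dt
= \int_0^T \prodt{\bbf^\vepsi - \p_t\bw^\vepsi}{\bw^\vepsi - \bw}\, dt
+ \int_0^T \langle \Del^\vepsi_p\bw,\, \bw^\vepsi - \bw\rangle\, dt .
\end{equation}
The right-hand side tends to zero: the $\bbf^\vepsi$ term by the $L^1$--$L^\infty$ pairing once one knows $\bw^\vepsi \to \bw$ uniformly (which requires an additional truncation argument, since $L^\infty((0,T);W^{1,p})$ does not embed in $L^\infty(\Ome_T)$ — here one follows \cite{Liu1,M1} and uses that the $\bbf^\vepsi$-contribution can be split, estimating one piece by the $L^1$-bound times an $o(1)$ quantity and the other by equi-integrability); the $\p_t$-term by weak--strong pairing; and the last term by weak convergence of $\bw^\vepsi - \bw$ in $W^{1,p}$. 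By the monotonicity of the $p$-structure, the left-hand side controls $c\int_0^T\int_\Ome |\nab\bw^\vepsi - \nab\bw|^{p}\, dx\, dt$ (up to the standard quasi-norm estimates for the $p$-Laplacian when $p \geq 2$, and the dual form $\int |\nab\bw^\vepsi - \nab\bw|^2 (|\nab\bw^\vepsi| + |\nab\bw|)^{p-2}$ when $1 < p < 2$, from which one recovers strong $L^q$-convergence of gradients for $q < p$ by Hölder). Hence $\nab\bw^\vepsi \to \nab\bw$ strongly in $L^q(\Ome_T)$ for all $q < p$, which together with the strong $L^q$-convergence of $\bw^\vepsi$ itself gives precompactness in $L^q((0,T);W^{1,q}(\Ome,\bR^n))$.

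The main obstacle I anticipate is the handling of the forcing term $\int_0^T\prodt{\bbf^\vepsi}{\bw^\vepsi - \bw}\, dt$: since $\bbf^\vepsi$ is only bounded in $L^1((0,T);L^1)$ and $\bw^\vepsi - \bw$ is not uniformly bounded in $L^\infty$, one cannot simply invoke the $L^1$--$L^\infty$ duality. The resolution, as in \cite{Liu1} and Lemma 9 of \cite{M1}, is a truncation argument: write $\bw^\vepsi - \bw = T_M(\bw^\vepsi - \bw) + (\text{tail})$, where $T_M$ is truncation at level $M$; the truncated part converges strongly in every $L^q$ and is bounded in $L^\infty$, handling the $\bbf^\vepsi$-pairing, while the tail is controlled uniformly in $\vepsi$ by the $W^{1,p}$-bound and Chebyshev, then sent to zero by letting $M \to \infty$. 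A second, more minor, point is that one must verify the limit equation $\p_t\bw - \Del^\vepsi_p\bw = \bbf$ (with $\bbf$ a weak-$*$ limit of $\bbf^\vepsi$ in the sense of measures, or its absolutely continuous representative) actually holds, so that the term $\int_0^T\langle\Del^\vepsi_p\bw,\bw^\vepsi-\bw\rangle\,dt$ in \reff{eq-mono-lem23} is meaningful; this follows from the monotonicity trick, Lemma \ref{lem2.1}, applied once the $o(1)$ estimate on the right-hand side of \reff{eq-mono-lem23} is in hand — the usual bootstrap being that identification of the limit and the strong convergence are proved together.
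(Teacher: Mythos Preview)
The paper does not prove Lemma~\ref{lem2.3}; it simply states that ``its proof can be found in \cite{Liu1} (also see Lemma 9 of \cite{M1}).'' Your plan---Aubin--Lions--Simon for strong $L^q$ convergence of $\bw^\vepsi$ itself, followed by testing the equation against a truncation of $\bw^\vepsi-\bw$ and using the monotonicity structure of the $p$-Laplacian to obtain a.e.\ (hence $L^q$, $q<p$, by Vitali) convergence of gradients---is exactly the strategy of those references, which in turn is a variant of the Boccardo--Murat argument for equations with $L^1$ right-hand side. So your approach and the paper's (cited) approach coincide.

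Two points in your write-up deserve care. First, for $1<p<2$ the pairing $b_p(\vepsi)\int_{\Ome_T}\nab\bw^\vepsi\cdot\nab(\bw^\vepsi-\bw)$ is not a priori integrable under the hypotheses as stated (both factors are only $L^p$); in the paper's actual application (Theorem~\ref{thm5.1}) this is rescued by the extra bound \reff{e4.8b}, but in the abstract lemma the cleanest fix is to move $b_p(\vepsi)\Del\bw^\vepsi$ to the right-hand side and treat it as an additional forcing that tends to zero in $(W^{1,p})'$ as $\vepsi\to 0$. Second, your last paragraph about ``verifying the limit equation $\p_t\bw-\Del^\vepsi_p\bw=\bbf$'' via Lemma~\ref{lem2.1} is off: the operator $\Del^\vepsi_p$ varies with $\vepsi$, so Minty's trick in that form does not apply, and in any case the limit equation plays no role in the compactness conclusion. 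What you actually need is only that the flux $|\nab\bw|_\vepsi^{p-2}\nab\bw$ (for the fixed limit $\bw$) converges strongly in $L^{p'}(\Ome_T)$ as $\vepsi\to 0$, which pairs with the weak convergence $\nab\bw^\vepsi\rightharpoonup\nab\bw$ to kill the last term in your display~\reff{eq-mono-lem23}; the identification of any limit operator is irrelevant here.
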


\begin{remark}\label{rem2.1}
In the Lemma \ref{lem2.2}, $\vepsi>0$ is a fixed parameter and the
differential operator $\Del^\vepsi_p$ does not depend on the
variable index $\del$. It can be shown that (cf. Theorem 2.1 of
\cite{CHH}) that the lemma still holds for $\vepsi=0$ when $p\geq
2$. On the other hand, $\vepsi$ is a variable index in Lemma
\ref{lem2.3}, and the operator $\Del^\vepsi_p$ also depends on the
variable index $\vepsi$.
\end{remark}

\section{Well-posedness of the regularized flow \reff{e1.18}-\reff{e1.20}}
\label{sec-3}

In this section we shall analyze the regularized flow
\reff{e1.18}-\reff{e1.20} for each fixed pair of positive numbers
$(\vepsi,\del)$. We establish an energy law, a
maximum principle, uniform (in both $\vepsi$ and $\del$) a priori
estimates, existence and uniqueness of weak and classical
solutions. We begin with a couple of definitions of solutions to
\reff{e1.18}-\reff{e1.20}.

\begin{definition}\label{def3.1}
For $1\leq p < \infty$, a map $\bu^{\vepsi,\del}:
\Ome_T \rightarrow \mathbf{R}^n$ is called a global {\em weak} solution to
\reff{e1.18}-\reff{e1.20} if
\begin{itemize}
\item[{\rm (i)}] $\bu^{\vepsi,\del}\in
L^\infty((0,T);W^{1,p^*}(\Ome,\mathbf{R}^n)) \cap H^1((0,T);
L^2(\Ome, \mathbf{R}^n))$,  for $p^*=\max\{2,p\}$,
\item[{\rm (ii)}] $|\bu^{\vepsi,\del}|\leq 1$ a.e. in $\Ome_T$,
\item[{\rm (iii)}] $\bu^{\vepsi,\del}$ satisfies \reff{e1.18}-\reff{e1.20}
in the distributional sense.
\end{itemize}
\end{definition}

\begin{definition}\label{def3.2}
A weak solution $\bu^{\vepsi,\del}$ to \reff{e1.18}-\reff{e1.20}
is called a {\em strong} solution if~ $\bu^{\vepsi,\del}\in
L^p((0,T);$ $W^{2,p^*}(\Ome,\mathbf{R}^n))
\cap H^1((0,T);L^{p^*}(\Ome,\mathbf{R}^n))$. It
is called a {\em regular} solution if in addition
$\bu^{\vepsi,\del}\in H^1((0,T);$ $W^{1,p^*}(\Ome,\mathbf{R}^n))$.
\end{definition}

\subsection{Energy law and a priori estimates}
Since \reff{e1.18}-\reff{e1.20} is the gradient flow for the
functional $J_{p,\lam}^{\vepsi,\del}$, its regular solutions must
satisfy a dissipative energy law. Indeed, we have the following lemma.

\begin{lemma}\label{lem3.1} Let $\bu_0$ and $\bg$ be  
sufficiently smooth, and suppose that $\bu^{\vepsi,\del}$ is a regular 
solution to \reff{e1.18}-\reff{e1.20}, then $\bu^{\vepsi,\del}$ satisfies the
following energy law
\begin{equation}\label{e3.1}
J_{p,\lam}^{\vepsi,\del}(\bu^{\vepsi,\del}(s)) +\int_0^s
\norm{\bu^{\vepsi,\del}_t(t)}{L^2}^2\, dt
=J_{p,\lam}^{\vepsi,\del}(\bu_0) \qquad\mbox{for a.e. }  s\in [0,T] .
\end{equation}
\end{lemma}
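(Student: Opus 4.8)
The plan is to derive the energy law \reff{e3.1} by the standard technique of testing the evolution equation \reff{e1.18} with the time derivative $\bu^{\vepsi,\del}_t$ and integrating over $\Ome$, then recognizing each resulting term as the time derivative of a piece of $J_{p,\lam}^{\vepsi,\del}$. Since $\bu^{\vepsi,\del}$ is a \emph{regular} solution, by Definition \ref{def3.2} we have $\bu^{\vepsi,\del}_t \in L^2((0,T);W^{1,p^*}(\Ome,\mathbf{R}^n))$ and $\bu^{\vepsi,\del} \in L^p((0,T);W^{2,p^*}(\Ome,\mathbf{R}^n))$, so all the integrations by parts below are legitimate and the boundary term generated by $\Del_p^\vepsi$ vanishes because of the natural boundary condition \reff{e1.19}. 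First I would multiply \reff{e1.18} by $\bu^{\vepsi,\del}_t$ and integrate over $\Ome$, obtaining
\begin{equation*}
\norm{\bu^{\vepsi,\del}_t}{L^2}^2 - \prodt{\Del_p^\vepsi\bu^{\vepsi,\del}}{\bu^{\vepsi,\del}_t} + \frac{1}{\del}\prodt{(|\bu^{\vepsi,\del}|^2-1)\bu^{\vepsi,\del}}{\bu^{\vepsi,\del}_t} + \lam\prodt{\bu^{\vepsi,\del}-\bg}{\bu^{\vepsi,\del}_t} = 0.
\end{equation*}

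Next I would identify the last three inner products as exact time derivatives. Integrating by parts in the $\Del_p^\vepsi$ term and using \reff{e1.21} and \reff{e1.19} gives
\[
-\prodt{\Del_p^\vepsi\bu^{\vepsi,\del}}{\bu^{\vepsi,\del}_t} = \int_\Ome \cB_p^{\vepsi,\del}:\nab\bu^{\vepsi,\del}_t\,dx = \frac{d}{dt}E_p^\vepsi(\bu^{\vepsi,\del}),
\]
since $\frac{d}{dt}\bigl[\tfrac{b_p(\vepsi)}{2}|\nab\bu^{\vepsi,\del}|^2 + \tfrac1p|\nab\bu^{\vepsi,\del}|_\vepsi^p\bigr] = \bigl(b_p(\vepsi)\nab\bu^{\vepsi,\del} + |\nab\bu^{\vepsi,\del}|_\vepsi^{p-2}\nab\bu^{\vepsi,\del}\bigr):\nab\bu^{\vepsi,\del}_t$ by the chain rule applied to $s\mapsto (s+a_p(\vepsi)^2)^{p/2}$. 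Similarly, the penalization term equals $\frac{d}{dt}L^\del(\bu^{\vepsi,\del})$ because $\nab_\bv F(\bv) = (|\bv|^2-1)\bv$, and the fidelity term equals $\frac{d}{dt}\bigl[\tfrac\lam2\int_\Ome|\bu^{\vepsi,\del}-\bg|^2dx\bigr]$ since $\bg$ is time-independent. Collecting these, the identity becomes $\norm{\bu^{\vepsi,\del}_t}{L^2}^2 + \frac{d}{dt}J_{p,\lam}^{\vepsi,\del}(\bu^{\vepsi,\del}) = 0$ pointwise a.e. in $t$; integrating in $t$ from $0$ to $s$ yields \reff{e3.1}.

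The one point requiring a little care — and the main obstacle — is justifying that $t\mapsto J_{p,\lam}^{\vepsi,\del}(\bu^{\vepsi,\del}(t))$ is absolutely continuous with derivative given by the chain-rule expression above, so that the fundamental theorem of calculus applies and the endpoint values are the classical ones (in particular $J_{p,\lam}^{\vepsi,\del}(\bu^{\vepsi,\del}(0)) = J_{p,\lam}^{\vepsi,\del}(\bu_0)$ by \reff{e1.20}). For a regular solution this follows from the regularity $\bu^{\vepsi,\del} \in H^1((0,T);W^{1,p^*}(\Ome,\mathbf{R}^n))$ together with the fact that the integrand of $E_p^{\vepsi}$ is a $C^1$ function of $\nab\bv$ (the regularization removes the singularity of the $p$-energy density at the origin, and for $p<2$ the term $a_p(\vepsi) = \vepsi > 0$ keeps $|\nab\bv|_\vepsi$ bounded away from zero), so the standard result on differentiation of Nemytskii-type functionals along $H^1$-in-time paths applies. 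Since the pairing $\prodt{\Del_p^\vepsi\bu^{\vepsi,\del}}{\bu^{\vepsi,\del}_t}$ is also well-defined as an $L^1$-in-time function under the strong/regular solution regularity, the whole computation is rigorous, and \reff{e3.1} follows.
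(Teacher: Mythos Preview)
Your proof is correct and follows exactly the same approach as the paper: test \reff{e1.18} with $\bu^{\vepsi,\del}_t$, identify each term as an exact time derivative of a piece of $J_{p,\lam}^{\vepsi,\del}$, and integrate in $t$. You supply more justification (the integration-by-parts via \reff{e1.19}, the chain-rule details, and the absolute continuity of $t\mapsto J_{p,\lam}^{\vepsi,\del}(\bu^{\vepsi,\del}(t))$) than the paper's terse one-line version, but the argument is identical.
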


\begin{proof}
Testing equation \reff{e1.18} with $ \bu^{\vepsi,\del}_t$ we get
\begin{align*}
\norm{\bu^{\vepsi,\del}_t(t) }{L^2}^2 &+\frac{d }{d t}\int_\Ome
\Bigl\{\, \frac{b_p(\vepsi)}2 |\nab \bu^{\vepsi,\del}(t)|^2
+\frac{1}{p}|\nab \bu^{\vepsi,\del}(t)|_\vepsi^p \\
&\qquad +\frac{1}{\del} F(\bu^{\vepsi,\del}(t))+\frac{\lam}2
|\bu^{\vepsi,\del}(t)-\bg|^2\,\Bigr\}dx =0.
\end{align*}
The desired identity \reff{e3.1} then follows from
integrating the above equation in $t$ over the interval $[0,s]$
and using the definition of $J_{p,\lam}^{\vepsi,\del}$.
\end{proof}

The above energy law immediately implies the following uniform (in
$\vepsi$ and $\del$) a priori estimates.

\begin{corollary}\label{cor3.1}
Suppose that $\bu_0$ and $\bg$ satisfy 
\begin{equation}\label{IC}
J_{p,\lam}^{\vepsi,\del}(\bu_0)\leq c_0
\end{equation}
for some positive constant $c_0$ independent of $\vepsi$ and $\del$, then 
there exists another positive constant $C:=C(p,\lam,c_0)$ which is also 
independent of $\vepsi$ and $\del$ such that
\begin{eqnarray}\label{e3.2}
&& \norm{\bu^{\vepsi,\del}}{L^\infty(W^{1,p})}
+\norm{\bu^{\vepsi,\del}}{H^1(L^2)} \leq C
\qquad \mbox{for } 1\leq p< \infty, \\
&& \del^{-\frac12} \norm{|\bu^{\vepsi,\del}|^2-1}{L^\infty(L^2)}\leq C
\qquad \mbox{for } 1\leq p< \infty ,  \label{e3.3}\\
&& \norm{|\nab \bu^{\vepsi,\del}|_\vepsi^{p-2}\nab
\bu^{\vepsi,\del}}{L^\infty (L^{p'})} \leq C\qquad \mbox{for }
p'=\frac{p}{p-1},\quad 1\leq p< \infty, \label{e3.4}\\
&& \sqrt{b_p(\vepsi)}\,\norm{\nab \bu^{\vepsi,\del}}{L^\infty(L^2)
}\leq C \qquad \mbox{for } 1\leq p<\infty. \label{e3.5}
\end{eqnarray}
\end{corollary}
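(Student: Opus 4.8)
The plan is to deduce Corollary~\ref{cor3.1} directly from the energy law \reff{e3.1} together with the hypothesis \reff{IC}, so the argument is essentially a sequence of elementary estimates reading off each term of $J_{p,\lam}^{\vepsi,\del}$. First I would observe that, since every term in $J_{p,\lam}^{\vepsi,\del}(\bv)=\frac{b_p(\vepsi)}{2}\int_\Ome|\nab\bv|^2+\frac1p\int_\Ome|\nab\bv|_\vepsi^p+\frac1\del\int_\Ome F(\bv)+\frac\lam2\int_\Ome|\bv-\bg|^2$ is nonnegative, the identity \reff{e3.1} and \reff{IC} give, for a.e.\ $s\in[0,T]$,
\begin{equation*}
J_{p,\lam}^{\vepsi,\del}(\bu^{\vepsi,\del}(s))+\int_0^s\norm{\bu^{\vepsi,\del}_t(t)}{L^2}^2\,dt\leq c_0.
\end{equation*}
Thus each individual term on the left is bounded by $c_0$, uniformly in $\vepsi,\del$ and (for the first group) in $s$.

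From $\int_0^s\norm{\bu^{\vepsi,\del}_t}{L^2}^2\,dt\leq c_0$ I get the $H^1(L^2)$ bound in \reff{e3.2} (combined with the $L^\infty(L^2)$ bound on $\bu^{\vepsi,\del}$ itself, obtained by integrating $\bu^{\vepsi,\del}(t)=\bu_0+\int_0^t\bu^{\vepsi,\del}_t$ and using $\norm{\bu_0}{L^2}\le C(c_0,\lam)$, which follows from the fidelity term at $t=0$ together with smoothness of $\bg$). From $\frac1\del\int_\Ome F(\bu^{\vepsi,\del}(s))\le c_0$ and $F(\bv)=\frac14(|\bv|^2-1)^2$ I obtain $\norm{|\bu^{\vepsi,\del}(s)|^2-1}{L^2}^2\le 4c_0\,\del$, i.e.\ \reff{e3.3}. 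From $\sqrt{b_p(\vepsi)}$ applied to $\frac{b_p(\vepsi)}2\int_\Ome|\nab\bu^{\vepsi,\del}(s)|^2\le c_0$ I get \reff{e3.5}. For the $L^\infty(W^{1,p})$ bound in \reff{e3.2}, note $|\nab\bv|^p\le|\nab\bv|_\vepsi^p$, so $\frac1p\int_\Ome|\nab\bu^{\vepsi,\del}(s)|^p\le c_0$, giving a uniform $L^\infty(L^p)$ bound on $\nab\bu^{\vepsi,\del}$; together with the $L^\infty(L^2)\hookrightarrow L^\infty(L^p)$ (for $p\le 2$, on a bounded domain) or $L^\infty(L^2)$ bound on $\bu^{\vepsi,\del}$ itself this yields \reff{e3.2}. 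Finally, for \reff{e3.4} I would estimate pointwise $\bigl|\,|\nab\bu^{\vepsi,\del}|_\vepsi^{p-2}\nab\bu^{\vepsi,\del}\,\bigr|\le|\nab\bu^{\vepsi,\del}|_\vepsi^{p-1}$ and then note $\int_\Ome|\nab\bu^{\vepsi,\del}|_\vepsi^{(p-1)p'}=\int_\Ome|\nab\bu^{\vepsi,\del}|_\vepsi^{p}$ since $(p-1)p'=p$; for $p\ge 2$ one has $a_p(\vepsi)=0$ so this is $\le pc_0$, and for $1\le p<2$ one splits $|\nab\bv|_\vepsi^p\le C_p(|\nab\bv|^p+\vepsi^p)\le C_p(|\nab\bv|_\vepsi^p + \vepsi^p)$ and uses $|\Ome|<\infty$ together with $\vepsi\le1$ (wlog) to absorb the constant term, again bounded by $C c_0$. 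Taking $p'$-th roots gives \reff{e3.4}.

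The only mild subtlety—and the step I expect to require the most care—is the case $p=1$ in \reff{e3.4}, where $p'=\infty$: there $\bigl|\,|\nab\bu^{\vepsi,\del}|_\vepsi^{-1}\nab\bu^{\vepsi,\del}\,\bigr|\le 1$ holds pointwise automatically, so the $L^\infty(L^\infty)$ bound is trivially $1$ and needs no energy estimate at all; one simply remarks this separately. Everything else is a direct reading-off of nonnegative terms from \reff{e3.1}, so there is no real obstacle, only bookkeeping of the conjugate exponent identity $(p-1)p'=p$ and the harmless normalization $\vepsi\le 1$.
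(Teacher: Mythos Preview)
Your argument is correct and is precisely the paper's approach: the paper itself gives no proof beyond the one-line remark that the energy law ``immediately implies'' these estimates, and you have simply unpacked that claim term by term. The only minor imprecisions are the $L^p$ bound on $\bu^{\vepsi,\del}$ itself when $p>2$ (an $L^2$ bound alone does not embed into $L^p$ there; use Poincar\'e together with your gradient bound, or anticipate the maximum principle of Lemma~\ref{lem3.2}), and your splitting for \reff{e3.4} in the range $1<p<2$ is unnecessary since $\int_\Ome|\nab\bu^{\vepsi,\del}|_\vepsi^p\le p\,c_0$ is read off directly from the energy.
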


Next, using a test function technique of \cite{CHH}, we show that
the modulus $|\bu^{\vepsi,\del}|$ of every weak solution
$\bu^{\vepsi,\del}$ to \reff{e1.18}-\reff{e1.20} satisfies a
maximum principle.

\begin{lemma}\label{lem3.2}
Suppose that $|\bg|\leq 1$ and $|\bu_0|\leq 1$ a.e. in $\Ome$. 
Then weak solutions to \reff{e1.18}-\reff{e1.20} satisfy 
$|\bu^{\vepsi,\del}|\leq 1$ a.e. in $\Ome_T$.
\end{lemma}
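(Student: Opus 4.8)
The plan is to prove the maximum principle $|\bu^{\vepsi,\del}|\leq 1$ by testing the equation \reff{e1.18} against a suitable test function that isolates the part of the domain where $|\bu^{\vepsi,\del}|>1$. Specifically, set $\varphi:=(|\bu^{\vepsi,\del}|^2-1)_+$, the positive part, and use as a test function $\bu^{\vepsi,\del}\,\varphi$ (or equivalently integrate against $\varphi$ after taking the dot product of the equation with $\bu^{\vepsi,\del}$). First I would record the pointwise identity $\bu^{\vepsi,\del}_t\cdot\bu^{\vepsi,\del}=\tfrac12\,\tfrac{d}{dt}|\bu^{\vepsi,\del}|^2$, so the time derivative term contributes $\tfrac12\int_\Ome \tfrac{d}{dt}|\bu^{\vepsi,\del}|^2\,\varphi\,dx$, which I would like to rewrite as $\tfrac14\,\tfrac{d}{dt}\int_\Ome \varphi^2\,dx$ — this rewriting needs a small chain-rule/Lipschitz-truncation justification since $t\mapsto\varphi(\cdot,t)$ is only $H^1$ in time with values in $L^2$, but this is standard (approximate the positive part by smooth convex functions, or invoke a known chain rule for $BV$-in-one-variable functions).

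Next I would treat the elliptic term. Taking the dot product of $-\Del_p^\vepsi\bu^{\vepsi,\del}=-\Div\cB_p^{\vepsi,\del}$ with $\bu^{\vepsi,\del}$ and integrating against $\varphi$ (using the natural boundary condition \reff{e1.19} to kill the boundary term), integration by parts produces
\begin{align*}
\int_\Ome \cB_p^{\vepsi,\del}:\nab(\bu^{\vepsi,\del}\varphi)\,dx
&=\int_\Ome \cB_p^{\vepsi,\del}:\bigl(\nab\bu^{\vepsi,\del}\,\varphi\bigr)\,dx
+\int_\Ome \cB_p^{\vepsi,\del}:\bigl(\bu^{\vepsi,\del}\otimes\nab\varphi\bigr)\,dx .
\end{align*}
The first integrand equals $\bigl(b_p(\vepsi)|\nab\bu^{\vepsi,\del}|^2+|\nab\bu^{\vepsi,\del}|_\vepsi^{p-2}|\nab\bu^{\vepsi,\del}|^2\bigr)\varphi\geq0$, so it can be dropped after moving it to one side. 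For the second integral, on the set $\{\varphi>0\}$ we have $\nab\varphi=2(\nab\bu^{\vepsi,\del})^\top\bu^{\vepsi,\del}$, so $\cB_p^{\vepsi,\del}:(\bu^{\vepsi,\del}\otimes\nab\varphi)=2\,\bigl(b_p(\vepsi)+|\nab\bu^{\vepsi,\del}|_\vepsi^{p-2}\bigr)\,|(\nab\bu^{\vepsi,\del})^\top\bu^{\vepsi,\del}|^2\geq0$; hence the whole elliptic contribution has a sign and can be discarded. The penalization term contributes $\tfrac1\del\int_\Ome(|\bu^{\vepsi,\del}|^2-1)\,|\bu^{\vepsi,\del}|^2\,\varphi\,dx\geq0$ (since $(|\bu^{\vepsi,\del}|^2-1)\varphi=\varphi^2\geq0$ and $|\bu^{\vepsi,\del}|^2\geq0$), so it too is favorable.

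The only term that needs the hypothesis $|\bg|\leq1$ is the fidelity term $\lam\int_\Ome(\bu^{\vepsi,\del}-\bg)\cdot\bu^{\vepsi,\del}\,\varphi\,dx$. On $\{\varphi>0\}$ one has $|\bu^{\vepsi,\del}|>1\geq|\bg|$, so by Cauchy--Schwarz $(\bu^{\vepsi,\del}-\bg)\cdot\bu^{\vepsi,\del}=|\bu^{\vepsi,\del}|^2-\bg\cdot\bu^{\vepsi,\del}\geq|\bu^{\vepsi,\del}|^2-|\bg|\,|\bu^{\vepsi,\del}|\geq|\bu^{\vepsi,\del}|\bigl(|\bu^{\vepsi,\del}|-1\bigr)\geq0$, so this term is also nonnegative. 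Collecting everything, all terms except the time-derivative term are $\geq0$, whence
\[
\frac{d}{dt}\int_\Ome \varphi^2(\cdot,t)\,dx\leq 0 \qquad\text{for a.e. }t\in(0,T),
\]
and since $\varphi(\cdot,0)=(|\bu_0|^2-1)_+=0$ a.e. by the assumption $|\bu_0|\leq1$, Gronwall (trivially) gives $\int_\Ome\varphi^2(\cdot,t)\,dx=0$ for a.e.\ $t$, i.e.\ $|\bu^{\vepsi,\del}|\leq1$ a.e.\ in $\Ome_T$. The main obstacle is making the test-function argument rigorous at the regularity of a mere weak solution: $\bu^{\vepsi,\del}\varphi$ is an admissible test function because $\bu^{\vepsi,\del}\in L^\infty(W^{1,p^*})\cap H^1(L^2)$ forces $|\bu^{\vepsi,\del}|^2\in L^\infty(W^{1,1})$ and, after noting $|\bu^{\vepsi,\del}|$ is locally bounded in the needed norms, $\varphi\in L^\infty(W^{1,1})$ with $\varphi\in L^\infty(L^\infty)$ not guaranteed — so I would instead use the truncated test function $\bu^{\vepsi,\del}\,\min\{\varphi,k\}$, derive the inequality for each $k$, and let $k\to\infty$ by monotone convergence; this also cleanly handles the chain rule in time. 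Apart from this technical care, the computation is a routine sign-chase, exactly in the spirit of the test-function technique of \cite{CHH}.
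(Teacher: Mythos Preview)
Your proposal is correct and follows essentially the same strategy as the paper's proof: test the equation against $\bu^{\vepsi,\del}$ multiplied by a cut-off that vanishes on $\{|\bu^{\vepsi,\del}|\le 1\}$, verify that the elliptic, penalization, and fidelity contributions all have favorable signs, and conclude that a nonnegative quantity starting at zero remains zero. The only difference is cosmetic: the paper uses the cut-off $\chi(|\bu^{\vepsi,\del}|)=(|\bu^{\vepsi,\del}|-1)_+/|\bu^{\vepsi,\del}|$ (leading to $\tfrac{d}{dt}\int(|\bu^{\vepsi,\del}|-1)_+^2\,dx\le 0$), whereas you use $\varphi=(|\bu^{\vepsi,\del}|^2-1)_+$ (leading to $\tfrac{d}{dt}\int\varphi^2\,dx\le 0$); your truncation $\min\{\varphi,k\}$ to justify admissibility is a reasonable extra care that the paper omits.
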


\begin{proof}
Define the function
\begin{equation}\label{e3.6}
\chi(z):= \dfrac{(z-1)_+}{z} =\left\{ \begin{array}{ll}
  0 &\quad\mbox{for } 0\leq z\leq 1,\\
  \dfrac{z-1}{z} &\quad\mbox{for } z> 1.
        \end{array} \right.
\end{equation}
It is easy to check that $\chi$ is a nonnegative monotone
increasing function on the interval $[0,\infty)$.

Now testing \reff{e1.18} with $\bv:=\bu^{\vepsi,\del}
\chi(|\bu^{\vepsi,\del}|)$ we get
\begin{align*}
&&\frac12 \frac{d }{d t} \int_{\{ |\bu^{\vepsi,\del}|>1 \} }
\bigl(|\bu^{\vepsi,\del}|-1)^2\, dx
+ \int_{\{ |\bu^{\vepsi,\del}|>1 \} }
\Bigl\{\, b_p(\vepsi) |\nab\bu^{\vepsi,\del}|^2
+|\nab \bu^{\vepsi,\del}|_\vepsi^{p-2} \, |\nab \bu^{\vepsi,\del}|^2\\
&&\quad +\frac{1}{\del} \bigl(\,|\bu^{\vepsi,\del}|^2 -1 \,\bigr)
|\bu^{\vepsi,\del}|^2 
+\lam \bigl(|\bu^{\vepsi,\del}|^2-\bu^{\vepsi,\del}\cdot\bg
\bigr)\,\Bigr\}\, \chi(|\bu^{\vepsi,\del}|)\, dx\\& 
&\quad +\frac14 \int_{\{ |\bu^{\vepsi,\del}|>1 \} } 
\frac{[b_p(\vepsi)+|\nab \bu^{\vepsi,\del}|_\vepsi^{p-2}]\, |\nab |\bu^{\vepsi,\del}|^2 |^2
}{ |\bu^{\vepsi,\del}|^3 }\, dx= 0 .
\end{align*}

Since $\bu^{\vepsi,\del}\cdot\bg 
\leq |\bu^{\vepsi,\del}|\cdot |\bg|
\leq |\bu^{\vepsi,\del}|$,
the second 
integral is nonnegative. The
assertion then follows from integrating the above inequality and
using the assumption $|\bu_0|\leq 1$ a.e. in $\Ome$.
\end{proof}

\subsection{Existence of global weak and classical solutions}\label{sec-3.2}

We now state and prove the existence of global weak and classical
solutions to the regularized flow \reff{e1.18}-\reff{e1.20} for
each fixed pair of positive numbers $(\vepsi,\del)$. Since
$-\Del^\vepsi_p$ is uniformly elliptic for all $1\leq p< \infty$,
the existence of classical solutions follows immediately from
the classical theory of parabolic partial differential
equations (cf. \cite{LSU}).

\begin{theorem}\label{thm3.1}
Let $\Ome\subset \mathbf{R}^m$ be a bounded domain with a smooth
boundary. Suppose that $\bu_0$ and $\bg$ are sufficiently smooth
functions (say, $\bu_0,\bg\in [C^3(\overline{\Ome})]^n$) and 
satisfy \eqref{IC}. 
Then for each fixed pair of positive numbers $(\vepsi, \del)$, 
the regularized flow \reff{e1.18}-\reff{e1.20} possesses a unique global 
classical solution $\bu^{\vepsi,\del}$. Moreover, $\bu^{\vepsi,\del}$
satisfies the following energy law
\begin{equation}\label{e3.8}
J_{p,\lam}^{\vepsi,\del}(\bu^{\vepsi,\del}(s)) +\int_0^s
\norm{\bu^{\vepsi,\del}_t(t)}{L^2}^2\, dt =
J_{p,\lam}^{\vepsi,\del}(\bu_0) \qquad\mbox{for a.e. } s\in [0,T] .
\end{equation}
\end{theorem}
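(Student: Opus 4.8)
The plan is to establish existence and uniqueness of a global classical solution by invoking the standard theory of quasilinear parabolic systems, and then to recover the energy law \eqref{e3.8} by a direct testing argument justified by the regularity just obtained. First I would rewrite \eqref{e1.18} in the divergence form $\bu^{\vepsi,\del}_t - \Div\cB_p^{\vepsi,\del} = \bbf$, where the lower-order term $\bbf := -\frac1\del(|\bu^{\vepsi,\del}|^2-1)\bu^{\vepsi,\del} - \lam(\bu^{\vepsi,\del}-\bg)$ is smooth in its arguments, and observe that the flux $\cB_p^{\vepsi,\del} = b_p(\vepsi)\nab\bu^{\vepsi,\del} + |\nab\bu^{\vepsi,\del}|_\vepsi^{p-2}\nab\bu^{\vepsi,\del}$ has a Jacobian (with respect to $\nab\bu$) that is, for each fixed $\vepsi>0$, uniformly elliptic: its eigenvalues are bounded below by $b_p(\vepsi)>0$ and above by $b_p(\vepsi) + C(p,\vepsi)$ (here one uses that $|\nab\bu|_\vepsi \geq a_p(\vepsi)$ keeps the singular factor $|\nab\bu|_\vepsi^{p-2}$ bounded when $1\le p<2$, while for $p\ge 2$ one combines with the a priori $L^\infty(W^{1,p})$ bound from Corollary~\ref{cor3.1}). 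The principal part is also smooth in $\nab\bu$. Hence the system falls within the scope of the classical Ladyzhenskaya--Solonnikov--Ural'tseva theory \cite{LSU} for quasilinear parabolic systems with smooth data and smooth, compatible initial/boundary conditions.

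The argument I would carry out is a bootstrap: start from the a priori estimates of Corollary~\ref{cor3.1} and Lemma~\ref{lem3.2}, which give $\|\bu^{\vepsi,\del}\|_{L^\infty(W^{1,p})}$, $\|\bu^{\vepsi,\del}\|_{H^1(L^2)} \le C$ and $|\bu^{\vepsi,\del}|\le 1$ a.e.; these, together with the uniform ellipticity, place us in the setting where a Galerkin/fixed-point construction (or directly the $L^p$ parabolic theory) yields a weak solution, which is then upgraded. Because the coefficients are smooth and uniformly elliptic and the solution is bounded, De Giorgi--Nash--Moser type estimates give H\"older continuity of $\bu^{\vepsi,\del}$ and of $\nab\bu^{\vepsi,\del}$; Schauder theory for parabolic systems then promotes the solution to $C^{2+\gamma,1+\gamma/2}(\overline{\Ome}_T)$, i.e. classical. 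Global existence in time is immediate because all the estimates are $T$-independent in character — there is no blow-up mechanism, the dissipative structure of the gradient flow controls the energy — so the local solution extends to $[0,T]$ for arbitrary $T$. Uniqueness follows from a standard energy estimate on the difference of two solutions: testing the difference equation with $\bu_1^{\vepsi,\del}-\bu_2^{\vepsi,\del}$, the monotonicity of the map $\nab\bv\mapsto\cB_p^{\vepsi,\del}$ makes the principal term nonnegative, the penalization and fidelity terms are handled by the boundedness $|\bu_i^{\vepsi,\del}|\le1$ and Lipschitz estimates, and Gronwall's inequality closes the argument.

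Finally, to obtain the energy law \eqref{e3.8} I would simply repeat the formal computation of Lemma~\ref{lem3.1}: now that the solution is classical, it is in particular a regular solution in the sense of Definition~\ref{def3.2}, so testing \eqref{e1.18} with $\bu^{\vepsi,\del}_t$ is rigorously justified, and integrating in time over $[0,s]$ together with the definition of $J_{p,\lam}^{\vepsi,\del}$ yields \eqref{e3.8}. I expect the main obstacle to be the verification of uniform parabolicity and the smoothness of the principal coefficients in the singular regime $1\le p<2$ near points where $\nab\bu^{\vepsi,\del}=0$: one must check carefully that the regularization $|\cdot|_\vepsi = \sqrt{|\cdot|^2 + \vepsi^2}$ genuinely removes the singularity of $|\xi|^{p-2}$ at $\xi=0$ — which it does, since $|\xi|_\vepsi \ge \vepsi$ — so that $\xi\mapsto |\xi|_\vepsi^{p-2}\xi$ is $C^\infty$ and its Jacobian stays between two positive constants depending only on $\vepsi$ (and, for $p\ge 2$, on the a priori gradient bound). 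Once this is in place, everything else is an application of the cited classical theory.
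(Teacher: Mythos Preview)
Your proposal is correct and follows essentially the same route as the paper: invoke the classical quasilinear parabolic theory of Lady\v zenskaja--Solonnikov--Ural'tseva \cite{LSU} for existence and uniqueness (exploiting that the regularization makes the principal part nondegenerate and smooth), and then obtain the energy law \eqref{e3.8} by appealing to Lemma~\ref{lem3.1} once the solution is known to be regular. The paper's own proof is in fact just these two citations; your elaboration of the bootstrap and uniqueness argument is a reasonable unpacking of what ``an application of the standard results'' entails, though note that for $p\geq 2$ the upper ellipticity bound is not uniform in $|\nab\bu|$ and one relies on the structure conditions of \cite{LSU} (which accommodate $p$-growth in the gradient) rather than on a fixed upper eigenvalue bound.
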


\begin{proof}
The existence and uniqueness follow from an application of the standard results
for parabolic systems, see Chapter 5 of \cite{LSU}. 
\reff{e3.8} follows from Lemma \ref{lem3.1}.
\end{proof}

For less regular functions $\bu_0$ and $\bg$, 
we have the following weaker result.

\begin{theorem}\label{thm3.2}
Let $\Ome\subset \mathbf{R}^m$ be a bounded domain with smooth boundary. 
Suppose that $|\bu_0|\leq 1$ and $|\bg|\leq 1$ a.e. in $\Ome$
and $J_{p,\lam}^{\vepsi,\del}(\bu_0)< \infty$.
Then the regularized flow \reff{e1.18}-\reff{e1.20} has a unique
global weak solution $\bu^{\vepsi,\del}$ in the sense of Definition
\ref{def3.1}.
Moreover, $\bu^{\vepsi,\del}$ satisfies the following energy inequality
\begin{equation}\label{e3.9}
J_{p,\lam}^{\vepsi,\del}(\bu^{\vepsi,\del}(s)) +\int_0^s
\norm{\bu^{\vepsi,\del}_t(t)}{L^2}^2\, dt \leq
J_{p,\lam}^{\vepsi,\del}(\bu_0) \qquad\forall s\in [0,T] .
\end{equation}
\end{theorem}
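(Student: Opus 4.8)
The plan is to establish Theorem \ref{thm3.2} by a regularization-in-the-data argument, passing from the classical solutions granted by Theorem \ref{thm3.1} to weak solutions for merely bounded data. First I would approximate $\bu_0$ and $\bg$ by sequences $\{\bu_0^k\}, \{\bg^k\}\subset [C^3(\overline{\Ome})]^n$ with $|\bu_0^k|\leq 1$, $|\bg^k|\leq 1$, such that $\bu_0^k\to\bu_0$ and $\bg^k\to\bg$ strongly in appropriate norms (e.g.\ $W^{1,p^*}(\Ome)$ and $L^2(\Ome)$ respectively) and with $J_{p,\lam}^{\vepsi,\del}(\bu_0^k)\to J_{p,\lam}^{\vepsi,\del}(\bu_0)$; this uses the assumption $J_{p,\lam}^{\vepsi,\del}(\bu_0)<\infty$ together with density and the truncation map $z\mapsto z/\max\{1,|z|\}$, which is Lipschitz and does not increase the relevant energies. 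Theorem \ref{thm3.1} then supplies, for each $k$, a unique global classical solution $\bu^{\vepsi,\del}_k$ satisfying the energy law \reff{e3.8} with data $\bu_0^k$, and Lemma \ref{lem3.2} gives $|\bu^{\vepsi,\del}_k|\leq 1$ a.e.\ in $\Ome_T$.

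Next I would extract the uniform estimates. Since $J_{p,\lam}^{\vepsi,\del}(\bu_0^k)$ is bounded uniformly in $k$, Corollary \ref{cor3.1} (with $c_0$ now allowed to depend on $\vepsi,\del$, which is harmless here since $\vepsi,\del$ are fixed) yields bounds on $\bu^{\vepsi,\del}_k$ in $L^\infty((0,T);W^{1,p^*}(\Ome,\bR^n))\cap H^1((0,T);L^2(\Ome,\bR^n))$; in fact, since $\vepsi>0$ is fixed, the term $\sqrt{b_p(\vepsi)}\,\norm{\nab\bu^{\vepsi,\del}_k}{L^\infty(L^2)}\leq C$ upgrades this to a uniform $L^\infty((0,T);W^{1,2}(\Ome,\bR^n))$ bound, so $W^{1,p^*}$ control holds for $p^*=\max\{2,p\}$ as required by Definition \ref{def3.1}. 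I would then pass to a subsequence with $\bu^{\vepsi,\del}_k\rightharpoonup\bu^{\vepsi,\del}$ weak-$*$ in $L^\infty(W^{1,p^*})$, weakly in $H^1(L^2)$, and, invoking Lemma \ref{lem2.2} (with the role of $\del$ there played by the index $k$: the operator $\Del^\vepsi_p$ is fixed and uniformly elliptic, $\bbf^{\vepsi,\del}_k:=-\frac1\del(|\bu^{\vepsi,\del}_k|^2-1)\bu^{\vepsi,\del}_k-\lam(\bu^{\vepsi,\del}_k-\bg^k)$ is bounded in $L^1(L^1)$ — indeed in $L^\infty(L^\infty)$ — thanks to $|\bu^{\vepsi,\del}_k|\leq1$), strongly in $L^q((0,T);W^{1,q}(\Ome,\bR^n))$ for every $q<p^*$, and hence a.e.\ in $\Ome_T$ along a further subsequence.

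With these convergences in hand I would pass to the limit in the weak (distributional) formulation of \reff{e1.18}--\reff{e1.20}. The linear time-derivative and fidelity terms pass trivially by weak convergence; the penalization term $\frac1\del(|\bu^{\vepsi,\del}_k|^2-1)\bu^{\vepsi,\del}_k$ passes by a.e.\ convergence plus the uniform $L^\infty$ bound and dominated convergence; the quasilinear term $\cB_p^{\vepsi,\del}(\nab\bu^{\vepsi,\del}_k)$ is handled by the strong convergence of $\nab\bu^{\vepsi,\del}_k$ in $L^q$ for $q<p^*$ (which suffices since $\cB_p^{\vepsi,\del}$ has growth exponent $p^*-1<p^*$), or alternatively by Lemma \ref{lem2.1} applied to the monotone hemicontinuous operator $-\Del^\vepsi_p$ on $W^{1,p^*}$. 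This shows $\bu^{\vepsi,\del}$ is a weak solution, and properties (i)--(ii) of Definition \ref{def3.1} follow from the uniform bounds and $|\bu^{\vepsi,\del}_k|\leq1$. For the energy inequality \reff{e3.9} I would pass to the limit in \reff{e3.8}: the left-hand side is weakly lower semicontinuous (the $p$-energy and regularization terms by convexity, the $H^1$-in-time dissipation term by weak convergence in $L^2$, the $F$-term by Fatou using a.e.\ convergence), while the right-hand side converges by the chosen convergence of the data, yielding $\leq$ rather than $=$. Finally, uniqueness of the weak solution follows by a standard energy/Gronwall argument: taking the difference of two weak solutions, testing with the difference, and using the Lipschitz character of the fidelity and penalization terms on $\{|\bu|\leq1\}$ together with the monotonicity of $-\Del^\vepsi_p$.

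The main obstacle I anticipate is the passage to the limit in the quasilinear term $\Div\cB_p^{\vepsi,\del}(\nab\bu^{\vepsi,\del}_k)$: although $\vepsi>0$ makes the operator uniformly elliptic and monotone so that in principle either strong $W^{1,q}$ convergence or the monotonicity trick of Lemma \ref{lem2.1} applies, one must check that the compactness input of Lemma \ref{lem2.2} is genuinely available here — i.e.\ that the right-hand side $\bbf^{\vepsi,\del}_k$ is uniformly bounded in $L^1(L^1)$ and that $\p_t\bu^{\vepsi,\del}_k$ is uniformly bounded in $L^2(L^2)$ — both of which hinge on the uniform energy bound from \reff{IC} for the approximate data and on the maximum principle $|\bu^{\vepsi,\del}_k|\leq1$. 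Once that is secured, the rest is routine.
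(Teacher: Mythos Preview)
Your proposal is correct and follows essentially the same route as the paper: smooth the data, invoke Theorem \ref{thm3.1} and Lemma \ref{lem3.2} for classical solutions satisfying the energy law and the maximum principle, extract uniform estimates from Corollary \ref{cor3.1}, pass to the limit, and conclude uniqueness by monotonicity and the energy inequality by lower semicontinuity. The only noteworthy difference is that for the quasilinear term the paper appeals directly to Minty's trick (Lemma \ref{lem2.1}), whereas you lead with the compactness of Lemma \ref{lem2.2} to get strong $L^q(W^{1,q})$ convergence of gradients (mentioning Minty as an alternative); both arguments are standard and valid here since $\vepsi>0$ makes $-\Del^\vepsi_p$ uniformly elliptic and monotone.
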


\begin{proof}
Let $\eta_\rho$ denote any well-known mollifier (cf.
\cite{LSU}), $\bu_{0,\rho}:=\eta_\rho\ast\bu_{0}$
and $\bg_\rho:=\eta_\rho\ast \bg$ denote the mollifications
of $\bu_0$ and $\bg$, respectively. Let $\bu^{\vepsi,\del}_\rho$
denote the classical solution to \reff{e1.18}-\reff{e1.20}
corresponding to the smoothed datum functions $\bu_{0,\rho}$ and
$\bg_\rho$. Hence, $\bu^{\vepsi,\del}_\rho$ satisfies the energy
law \reff{e3.8} with $\bu_{0,\rho}$ and $\bg_\rho$ in 
the place of $\bu_{0}$ and $\bg$.

From Lemma \ref{lem3.2} we know that $|\bu^{\vepsi,\del}_\rho|$
satisfies the maximum principle, thus,
\[
\max_{(x,t)\in \overline{\Ome}_T} |\bu^{\vepsi,\del}_\rho (x,t)|
\leq 1.
\]
Next, since
\[
\lim_{\rho\rightarrow 0} J_{p,\lam}^{\vepsi,\del}(\bu_{0,\rho}^\vepsi)
= J_{p,\lam}^{\vepsi,\del}(\bu_0) < \infty,
\]
the energy law for $\bu^{\vepsi,\del}_\rho$ immediately implies
that $\bu^{\vepsi,\del}_\rho$ satisfies estimates
\reff{e3.2}-\reff{e3.5}, {\em uniformly} in $\rho$ and $\del$.

The remainder of the proof is to extract a convergent subsequence
of $\{\bu^{\vepsi,\del}_\rho\}_{\rho>0}$ and to pass to the limit
as $\rho\rightarrow 0$ in the weak formulation of \reff{e1.18}.
This can be done easily in all terms of \reff{e1.18} except the
nonlinear term in $\Del^\vepsi_p$ (see \reff{e1.21}). To overcome
the difficulty, we appeal to Minty's trick or the ``decisive
monotonicity trick" as described in Lemma \ref{lem2.1}. Since this
part of proof is same as that of Theorem 1.5 of \cite{CHH}, so we
omit it and refer to \cite{CHH} for the details.

The uniqueness of weak solutions follows from a standard
perturbation argument and using the fact that $\Del^\vepsi_p$ is
a monotone operator. Finally, the inequality \reff{e3.9} follows
from setting $\rho\rightarrow 0$ in the energy law \reff{e3.8} for
$\bu^{\vepsi,\del}_\rho$, and using the
lower semicontinuity of $J_{p,\lam}^{\vepsi,\del}$ and
the $L^2$-norm with respect to $L^2$ weak convergence.
\end{proof}

We conclude this section with some remarks.

\begin{remark}\label{rem3.1}
(a). Since $W^{1,1}(\Ome)$ is not a {\em reflexive} Banach space,
Minty's trick would not apply in the case $p=1$ without the
help of the $b_p(\vepsi)\Del$ term in the operator $\Del^\vepsi_p$
(see \reff{e1.21}). On the other hand, in presence of this term,
Minty's trick does apply since we now deal with the Sobolev space
$H^1(\Ome)$ instead of $W^{1,1}(\Ome)$. In fact, if
$b_p(\vepsi)=0$ in \reff{e1.18}, $BV$ solutions are what one can
only expect in general for the regularized flow
\reff{e1.18}-\reff{e1.20} in the case $p=1$ (cf. \cite{FP}).

(b). We also point out that using nonzero parameter $b_p(\vepsi)$
is not necessary in the case $1< p< \infty$. For example, the
conclusion of Theorem \ref{thm3.2} still holds if we replace the
above $b_p(\vepsi)$ and $p^*$ by
\[
\widehat{b}_p(\vepsi)=\left\{ \begin{array}{ll}
                    0 &\quad\mbox{if } 1< p<\infty,\\
                    \vepsi^\alpha &\quad\mbox{if }  p=1 \quad (\alpha>0),
                      \end{array} \right.
\quad\mbox{and}\quad
\widehat{p}^*=\left\{ \begin{array}{ll}
                    p &\quad\mbox{if } 1< p<\infty,\\
                    2 &\quad\mbox{if }  p=1.
                      \end{array} \right.
\]
On the other hand, the conclusion of Theorem \ref{thm3.1} may not be
true any more after this modification.

(c). Theorem \ref{thm3.2} also holds when $\Ome$ is a bounded
Lipschitz domain, one way to prove this assertion is to use the
Galerkin method as done in \cite{CHH}, or to use the finite element
method to be introduced in Section \ref{sec-7}.
\end{remark}

\section{Passing to the limit as $\delta\rightarrow 0$}\label{sec-4}
The goal of this section is to derive the limiting flow of
\reff{e1.18}-\reff{e1.20} as $\del\rightarrow 0$ for each {\em
fixed} $\vepsi>0$. As in \cite{C,CHH,CS,Hu2,M2,S1}, the key ideas
for passing to the limit are to use the compactness result of
Lemma \ref{lem2.2} and to exploit the symmetries of the unit
sphere $S^{n-1}$. Our main result of this section is the following
existence theorem.

\begin{theorem}\label{thm4.1}
Let $p^*:=\max\{2,p\}$ and $\Ome\subset \mathbf{R}^m$ be a bounded
Lipschitz domain. For $1\leq p<\infty$, suppose that $\bu_0\in
W^{1,p^*}(\Ome,\mathbf{R}^n)$, $|\bu_0|= 1$ and $|\bg|\leq 1$
in $\Ome$, then there exists a map $\bu^{\vepsi} \in
L^\infty((0,T);$ $W^{1,p^*}(\Ome,\mathbf{R}^n)) \cap H^1((0,T);
L^2(\Ome, \mathbf{R}^n))$ such that
\begin{equation}\label{e4.1}
|\bu^\vepsi|=1\qquad\mbox{a.e. in  } \Ome_T,
\end{equation}
and $\bu^\vepsi$ is a weak solution (in the distributional sense, see \eqref{e4.25})
to the following problem
\begin{alignat}{2}\label{e4.2}
\bu^{\vepsi}_t - \Del_p^\vepsi \bu^{\vepsi}
+ \lam(\bu^{\vepsi}-\bg)&=\mu_{p,\lam}^\vepsi \bu^\vepsi
&&\qquad\mbox{in }\Ome_T, \\
\cB_p^{\vepsi}\bn &=0 &&\qquad\mbox{on } \p\Ome_T, \label{e4.3}\\
\bu^{\vepsi} &=\bu_0 &&\qquad\mbox{on } \Ome\times \{t=0\}, \label{e4.4}
\end{alignat}
where
\begin{equation}\label{e4.5}
\mu_{p,\lam}^\vepsi:= b_p(\vepsi)|\nab \bu^\vepsi|^2 +
|\nab\bu^\vepsi|_\vepsi^{p-2}\,|\nab\bu^\vepsi|^2 +\lam
(1-\bu^\vepsi\cdot\bg).
\end{equation}
Moreover, $\bu^\vepsi$ satisfies the energy inequality
\begin{equation}\label{e4.6a}
J_{p,\lam}^{\vepsi}(\bu^{\vepsi}(s)) +\int_0^s
\norm{\bu^{\vepsi}_t(t)}{L^2}^2\, dt \leq
J_{p,\lam}^{\vepsi}(\bu_0) \leq J_{p,\lam}^1(\bu_0)
 \qquad\mbox{for a.e. }  s\in [0,T],
\end{equation}
and the additional estimates
\begin{eqnarray}\label{e4.6}
&& \norm{|\nab \bu^{\vepsi}|_\vepsi^{p-2}\nab \bu^{\vepsi}}{L^\infty
(L^{p'})} \leq C\qquad \mbox{for }
p'=\frac{p}{p-1},\quad 1\leq p< \infty, \label{e4.7}\\
&& \norm{\Div\,\cB_p^\vepsi}{L^2(L^1)} \leq C \qquad \mbox{for }
1\leq p<\infty \label{e4.8}, \\
&& \norm{\Div\,(\cB_p^\vepsi\wedge\bu^\vepsi)}{L^2(L^2)} 
\leq C \qquad \mbox{for } 1\leq p<\infty \label{e4.8a},\\
&&\sqrt{b_p(\vepsi)} \norm{\nab \bu^\vepsi}{L^\infty(L^2)} \leq C \label{e4.8b}
\end{eqnarray}
for some positive $\vepsi$-independent constant $C$. Here
$\cB_p^\vepsi$ is the $n\times m$ matrix
\begin{equation}\label{e4.9}
\cB_p^\vepsi:= b_p(\vepsi)\nab \bu^\vepsi + |\nab
\bu^\vepsi|_\vepsi^{p-2} \nab \bu^\vepsi ,
\end{equation}
and
\begin{eqnarray}\label{e4.9a}
J^{\vepsi}_{p,\lam}(\bv)&:=& E^{\vepsi}_p(\bv)
+\frac{\lam}2\int_\Ome |\bv-\bg|^2\, dx .
\end{eqnarray}

\end{theorem}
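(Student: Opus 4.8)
The plan is to obtain $\bu^\vepsi$ as the limit, along a subsequence $\del\to 0$, of the weak solutions $\bu^{\vepsi,\del}$ to \reff{e1.18}-\reff{e1.20} furnished by Theorem \ref{thm3.2}, with $\vepsi>0$ held fixed throughout. First I would fix the data: since $|\bu_0|=1$ and $|\bg|\le 1$, the hypothesis $J_{p,\lam}^{\vepsi,\del}(\bu_0)<\infty$ holds (indeed $J_{p,\lam}^{\vepsi,\del}(\bu_0)=E_p^\vepsi(\bu_0)+\frac{\lam}{2}\int_\Ome|\bu_0-\bg|^2\,dx$ is independent of $\del$ because $F(\bu_0)\equiv 0$), so \eqref{IC} is satisfied with a $\del$-independent $c_0$, and Corollary \ref{cor3.1} gives the bounds \reff{e3.2}-\reff{e3.5} uniformly in $\del$. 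In particular $\{\bu^{\vepsi,\del}\}$ is bounded in $L^\infty((0,T);W^{1,p^*}(\Ome,\bR^n))\cap H^1((0,T);L^2(\Ome,\bR^n))$, and $\del^{-1/2}\norm{|\bu^{\vepsi,\del}|^2-1}{L^\infty(L^2)}\le C$, whence $|\bu^\vepsi|=1$ a.e. in the limit once we have strong $L^2$ convergence. To apply the compactness Lemma \ref{lem2.2} I would rewrite \reff{e1.18} as $\p_t\bu^{\vepsi,\del}-\Del_p^\vepsi\bu^{\vepsi,\del}=\bbf^{\vepsi,\del}$ with $\bbf^{\vepsi,\del}:=-\frac1\del(|\bu^{\vepsi,\del}|^2-1)\bu^{\vepsi,\del}-\lam(\bu^{\vepsi,\del}-\bg)$, and check that $\{\bbf^{\vepsi,\del}\}$ is bounded in $L^1((0,T);L^1(\Ome,\bR^n))$: the penalty term is controlled because $\frac1\del\int_\Ome(|\bu^{\vepsi,\del}|^2-1)|\bu^{\vepsi,\del}|\,dx\le \frac{C}{\del}\,\norm{|\bu^{\vepsi,\del}|^2-1}{L^1}$, and by $|\bu^{\vepsi,\del}|\le 1$ we have $||\bu^{\vepsi,\del}|^2-1|=1-|\bu^{\vepsi,\del}|^2\le 2(1-|\bu^{\vepsi,\del}|)\le 2(1-|\bu^{\vepsi,\del}|^2)^{1/2}\cdot(\dots)$; more directly, testing \reff{e1.18} against $\bu^{\vepsi,\del}$ over $\Ome\times(0,T)$ and using the a priori bounds shows $\frac1\del\int_0^T\!\!\int_\Ome F(\bu^{\vepsi,\del})\,dx\,dt\le C$, which together with the $L^\infty$ bound on $\bu^{\vepsi,\del}$ bounds $\bbf^{\vepsi,\del}$ in $L^1(L^1)$. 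Then Lemma \ref{lem2.2} gives precompactness of $\{\bu^{\vepsi,\del}\}$ in $L^q((0,T);W^{1,q}(\Ome,\bR^n))$ for every $1\le q<p^*$; passing to a subsequence we get $\bu^{\vepsi,\del}\to\bu^\vepsi$ strongly in $L^q(W^{1,q})$, hence $\nab\bu^{\vepsi,\del}\to\nab\bu^\vepsi$ a.e. in $\Ome_T$, along with weak-$*$ limits in $L^\infty(W^{1,p^*})$ and weak limit of $\p_t\bu^{\vepsi,\del}$ in $L^2(L^2)$, so $\bu^\vepsi$ lies in the asserted space and inherits the initial condition.

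The heart of the argument is identifying the limiting equation, and here the two nonlinear terms require different treatment. For the $p$-Laplacian part, since $\vepsi>0$ is fixed the operator $\Del_p^\vepsi$ is nondegenerate and I would use the monotonicity trick (Lemma \ref{lem2.1}): the a.e. convergence of $\nab\bu^{\vepsi,\del}$ plus the uniform $L^\infty(L^{p'})$ bound \reff{e3.4} identifies $\cB_p^{\vepsi,\del}\rightharpoonup \cB_p^\vepsi$ weakly-$*$ (and, after testing against a $\vepsi$-fixed but $\del$-varying regularization, Minty's argument confirms the limit is exactly $\cB_p^\vepsi=b_p(\vepsi)\nab\bu^\vepsi+|\nab\bu^\vepsi|_\vepsi^{p-2}\nab\bu^\vepsi$, as in the proof of Theorem \ref{thm3.2}). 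For the penalization term, the key observation — this is where the symmetry of $S^{n-1}$ enters — is that $\frac1\del(|\bu^{\vepsi,\del}|^2-1)\bu^{\vepsi,\del}$ is parallel to $\bu^{\vepsi,\del}$, so one first passes to the limit in the equation tested against test functions of the special form $\bu^{\vepsi,\del}\wedge\bphi$ (equivalently, against the tangential components), where this term drops out entirely. Concretely, taking the wedge product of \reff{e1.18} with $\bu^{\vepsi,\del}$ and using $\bu^{\vepsi,\del}\wedge[(|\bu^{\vepsi,\del}|^2-1)\bu^{\vepsi,\del}]=0$ gives
\begin{equation}\label{eq:wedgeeq}
\bu^{\vepsi,\del}_t\wedge\bu^{\vepsi,\del}-\Div(\cB_p^{\vepsi,\del}\wedge\bu^{\vepsi,\del})+\lam(\bu^{\vepsi,\del}-\bg)\wedge\bu^{\vepsi,\del}=0,
\end{equation}
where $\Div(\cB_p^{\vepsi,\del}\wedge\bu^{\vepsi,\del})=(\Div\cB_p^{\vepsi,\del})\wedge\bu^{\vepsi,\del}+\cB_p^{\vepsi,\del}\wedge\nab\bu^{\vepsi,\del}$ is used to shift derivatives. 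In \eqref{eq:wedgeeq} every term passes to the limit: $\bu^{\vepsi,\del}_t\rightharpoonup\bu^\vepsi_t$ in $L^2(L^2)$ against $\bu^{\vepsi,\del}\to\bu^\vepsi$ strongly in $L^2(L^2)$ (hence in $C([0,T];L^2)$ by Aubin--Lions type arguments), the wedge divergence term passes by weak-$*$ convergence of $\cB_p^{\vepsi,\del}$ and strong $L^q$ convergence of $\bu^{\vepsi,\del}$ and $\nab\bu^{\vepsi,\del}$, and the zeroth-order term is harmless. This yields the wedge form of \reff{e4.2}. Finally, to recover the full equation I would invoke the pointwise decomposition: since $|\bu^\vepsi|=1$, any vector field (in particular $\bu^\vepsi_t-\Del_p^\vepsi\bu^\vepsi+\lam(\bu^\vepsi-\bg)$) splits into its component parallel to $\bu^\vepsi$ and its tangential component; the wedge equation kills the tangential part, forcing the expression to be a scalar multiple of $\bu^\vepsi$, and contracting \reff{e1.18} with $\bu^{\vepsi,\del}$ (then passing to the limit, using $|\bu^\vepsi|^2=1$ and the a.e. gradient convergence to identify $b_p(\vepsi)|\nab\bu^{\vepsi,\del}|^2+|\nab\bu^{\vepsi,\del}|_\vepsi^{p-2}|\nab\bu^{\vepsi,\del}|^2$ — this needs the strong $L^2(L^2)$, not merely weak, convergence of the gradients, which Lemma \ref{lem2.2} provides for $q=2\le p^*$) pins down the scalar as exactly $\mu_{p,\lam}^\vepsi$. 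This establishes \reff{e4.2} in the distributional sense \eqref{e4.25}, and \reff{e4.3}-\reff{e4.4} follow from the weak formulation and the convergence of traces.

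It remains to verify the quantitative estimates. The energy inequality \reff{e4.6a} follows from passing to the limit $\del\to 0$ in \reff{e3.9}, using weak lower semicontinuity of $\bv\mapsto J_{p,\lam}^\vepsi(\bv)$ and of the $L^2(L^2)$ norm of $\bu^{\vepsi,\del}_t$, together with $\lim_{\del\to 0}J_{p,\lam}^{\vepsi,\del}(\bu_0)=J_{p,\lam}^\vepsi(\bu_0)$ (again because $F(\bu_0)=0$), and $J_{p,\lam}^\vepsi(\bu_0)\le J_{p,\lam}^1(\bu_0)$ follows from $|\nab\bu_0|_\vepsi^p\le (|\nab\bu_0|+\vepsi)^p$ and the explicit choice of $a_p,b_p$ in \reff{e1.12a} (for $\vepsi\le 1$, say). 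Estimate \reff{e4.7} is the limit of \reff{e3.4}; \reff{e4.8b} is the limit of \reff{e3.5}. For \reff{e4.8}, note $\Div\cB_p^\vepsi=\bu^\vepsi_t+\lam(\bu^\vepsi-\bg)-\mu_{p,\lam}^\vepsi\bu^\vepsi$ from \reff{e4.2}, and $\mu_{p,\lam}^\vepsi$ is in $L^1(\Ome)$ uniformly in time by the energy bound (the gradient terms are the $p$-energy density, bounded in $L^\infty(L^1)$), $\bu^\vepsi_t\in L^2(L^2)\hookrightarrow L^2(L^1)$, so $\Div\cB_p^\vepsi\in L^2(L^1)$ with a $\vepsi$-independent bound. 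For \reff{e4.8a}, passing to the limit in \eqref{eq:wedgeeq} gives $\Div(\cB_p^\vepsi\wedge\bu^\vepsi)=\bu^\vepsi_t\wedge\bu^\vepsi+\lam(\bu^\vepsi-\bg)\wedge\bu^\vepsi$, and since $|\bu^\vepsi|=1$ and $|\bg|\le 1$ the right-hand side is bounded in $L^2(L^2)$ by $\norm{\bu^\vepsi_t}{L^2(L^2)}+C$, again uniformly in $\vepsi$. \textbf{The main obstacle} I expect is the rigorous identification of the limit $\mu_{p,\lam}^\vepsi$: one must show that the nonlinear gradient densities converge (not just weakly), which is precisely why the strong $L^q(W^{1,q})$ compactness of Lemma \ref{lem2.2} — valid here because $\vepsi>0$ keeps $\Del_p^\vepsi$ uniformly elliptic and $q=2\le p^*$ — is indispensable, and why the argument must first route through the wedge equation \eqref{eq:wedgeeq} rather than attacking \reff{e1.18} head-on; the penalty term $\frac1\del(|\bu^{\vepsi,\del}|^2-1)\bu^{\vepsi,\del}$ has no $\del$-uniform bound beyond $L^1$ and cannot be passed to the limit directly, only its component orthogonal to $\bu^{\vepsi,\del}$ (which vanishes identically) is controllable.
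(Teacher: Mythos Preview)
Your overall strategy matches the paper's: obtain uniform bounds from the energy inequality, apply Lemma \ref{lem2.2} for strong $W^{1,q}$ compactness, take the wedge product with $\bu^{\vepsi,\del}$ to annihilate the penalty term, pass to the limit in the wedge equation, and then reconstruct the full equation with the multiplier $\mu_{p,\lam}^\vepsi$. The $L^1(L^1)$ bound on $\bbf^{\vepsi,\del}$ is handled essentially as you describe (the paper uses $|\bu^{\vepsi,\del}|\le\frac12(1+|\bu^{\vepsi,\del}|^2)$ together with testing \reff{e1.18} against $\bu^{\vepsi,\del}$).

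There is, however, a real gap in your identification of $\mu_{p,\lam}^\vepsi$. You propose to contract \reff{e1.18} with $\bu^{\vepsi,\del}$ and pass to the limit, claiming that Lemma \ref{lem2.2} supplies strong $L^2(L^2)$ convergence of $\nab\bu^{\vepsi,\del}$ ``for $q=2\le p^*$''. But Lemma \ref{lem2.2} gives $L^q(W^{1,q})$ precompactness only for $q<p^*$ \emph{strictly}, so when $1\le p\le 2$ (hence $p^*=2$) you do not get $q=2$. Moreover, contracting \reff{e1.18} with $\bu^{\vepsi,\del}$ leaves the term $\frac1\del(|\bu^{\vepsi,\del}|^2-1)|\bu^{\vepsi,\del}|^2$, which is only bounded in $L^1$ and has no identifiable limit. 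The paper sidesteps both issues: once $|\bu^\vepsi|=1$ is established, the \emph{pointwise} consequences $\bu^\vepsi_t\cdot\bu^\vepsi=0$ and $(\cB_p^\vepsi)^T\bu^\vepsi=0$ (see \reff{e4.22}) immediately give, at the $\vepsi$-level and without any further limit, the identity \reff{e4.23}. Combining \reff{e4.21} tested with $\bw=\bu^\vepsi\wedge\bv$ and \reff{e4.23} tested with $\varphi=\bu^\vepsi\cdot\bv$, together with $\bv=(\bu^\vepsi\cdot\bv)\bu^\vepsi-\bu^\vepsi\wedge(\bu^\vepsi\wedge\bv)$, then yields \reff{e4.25}. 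In short, the multiplier is computed algebraically from the constraint, not by passing a quadratic gradient density through a limit. (Relatedly, Minty's Lemma \ref{lem2.1} is not invoked in this step: the a.e.\ gradient convergence furnished by Lemma \ref{lem2.2} for some $q<p^*$, plus the uniform $L^\infty(L^{p'})$ bound \reff{e3.4}, already identify the weak-$*$ limit of $|\nab\bu^{\vepsi,\del}|_\vepsi^{p-2}\nab\bu^{\vepsi,\del}$ directly as in \reff{e4.18}.)
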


\begin{proof}
In light of Remark \ref{rem3.1} (c) and the density
argument,  without loss of the generality we assume that $\Ome$ is
a bounded smooth domain, $\bu_0$ and $\bg$ are smooth functions.
On noting that $|\bu_0|=1$ implies that $L^\delta(\bu_0)=0$, then,
$E^{\vepsi,\delta}_p(\bu_0) = E^\vepsi_p(\bu_0)$ in \eqref{e1.14}. 
Since $ E^\vepsi_p(\bu_0) \leq  E^1_p(\bu_0)$, 
Hence, the assumptions on $\bu_0$ and $\bg$ ensure \eqref{IC} holds. 
Since the proof is long, we divide it into three steps.

\medskip
{\em Step 1:  Extracting a convergent subsequence}.
Let $\bu^{\vepsi,\del}$ be the weak solution solution to \reff{e1.18}-\reff{e1.20}
whose existence is established in Theorem \ref{thm3.2}. Since
$E^{\vepsi,\delta}_p(\bu_0)\leq E^1_p(\bu_0)$, then
$J^{\vepsi,\delta}_{p,\lambda}(\bu_0)$ is uniformly bounded with respect 
$\vepsi$ and $\delta$. Hence, \reff{e3.9} implies that
$\bu^{\vepsi,\del}$ satisfies the uniform (in both $\vepsi$ and
$\del$) estimates \reff{e3.2}-\reff{e3.5}, and the maximum principle
$|\bu^{\vepsi,\del}|\leq 1$ on $\overline{\Ome}_T$.

By the weak compactness of $W^{1,p}(\Ome)$ and Sobolev embedding
(cf. \cite{adams75,simon87}), there exists a subsequence of
$\{\bu^{\vepsi,\del}\}_{\del>0}$ (still denoted by the same notation)
and a map $\bu^{\vepsi} \in L^\infty((0,T);
W^{1,p^*}(\Ome,\mathbf{R}^n))\cap H^1((0,T); L^2(\Ome,\mathbf{R}^n))$
such that as $\del\rightarrow 0$
\begin{alignat}{2}\label{e4.10}
\bu^{\vepsi,\del} &\longrightarrow \bu^{\vepsi} &&\qquad\mbox{weakly* in }
L^\infty((0,T);W^{1,p^*}(\Ome,\mathbf{R}^n)), \\
& &&\qquad\mbox{strongly in } L^2((0,T);L^2(\Ome,\mathbf{R}^n)), \label{e4.11}\\
& &&\qquad\mbox{a.e. in } \Ome_T, \label{e4.12} \\
\bu^{\vepsi,\del}_t &\longrightarrow \bu^{\vepsi}_t  &&\qquad\mbox{weakly in }
L^2((0,T);L^2(\Ome,\mathbf{R}^n)), \label{e4.13}\\
|\bu^{\vepsi,\del}| &\longrightarrow 1 &&\qquad\mbox{strongly in }
L^2((0,T);L^2(\Ome)). \label{e4.14}
\end{alignat}

It follows immediately from \reff{e4.12} and \reff{e4.14} that
\begin{equation}\label{e4.15}
|\bu^\vepsi| = 1\qquad\mbox{a.e. in } \Ome_T .
\end{equation}

\medskip
{\em Step 2: Wedge product technique and passing to the limit}.
Since $|\bu^{\vepsi,\del}|\leq 1$ in $\overline{\Ome}_T$, an
application of the Lebesgue dominated convergence theorem yields,
on noting  (\ref{e4.12}),  that
\begin{equation}\label{e4.16}
\bu^{\vepsi,\del} \overset{\del\searrow 0}{\longrightarrow}\bu^{\vepsi}
\qquad\mbox{strongly in } L^r((0,T);L^r(\Ome,\mathbf{R}^n)),\quad\forall
r\in [1, \infty).
\end{equation}

Let
\[
\bbf^{\vepsi,\del}:=\frac{1}{\del}\bigl(1-|\bu^{\vepsi,\del}|^2\bigr) \bu^{\vepsi,\del} 
- \lam\bigl( \bu^{\vepsi,\del}-\bg\bigr),
\]
it follows from the estimate $|\bu^{\vepsi,\del}|\leq 1$ and the inequality 
$|\bu^{\vepsi,\del}|\leq \frac12\bigl( 1+ |\bu^{\vepsi,\del}|^2 \bigr)$ that
\begin{eqnarray*}
\frac{1}{\del}\int_0^T\int_\Ome \bigl( 1-|\bu^{\vepsi,\del}|^2 \bigr)\cdot |\bu^{\vepsi,\del}|
\,dx dt
&&\leq\frac{1}{\del} \int_0^T\int_\Ome \bigl(1-|\bu^{\vepsi,\del}|^2\bigr)^2 \,dx dt \\
&& \quad\qquad
+ \frac{1}{\del} \int_0^T\int_\Ome \bigl(1- |\bu^{\vepsi,\del}|^2\bigr)
\cdot |\bu^{\vepsi,\del}|^2\, dx dt. \nonumber
\end{eqnarray*}
\reff{e3.3} immediately implies that the first term on the right-hand 
side is uniformly bounded in $\delta$. Testing equation \reff{e1.18} 
by $\bu^{\vepsi,\del}$ and using estimates \reff{e3.2}, \reff{e3.4} 
and \reff{e3.5} we conclude that the second term on the 
right-hand side is also uniformly bounded in $\delta$. 
Hence, $\bbf^{\vepsi,\del}$ is uniformly bounded with respect 
to $\delta$ in $L^1((0,T);L^1(\Ome,\mathbf{R}^n))$. 
By Lemma \ref{lem2.2} we have that 
\begin{equation}\label{e4.17}
\nab \bu^{\vepsi,\del} \overset{\del\searrow 0}{\longrightarrow}
\nab\bu^{\vepsi} \qquad\mbox{strongly in }
L^q((0,T);L^q(\Ome,\mathbf{R}^{n\times m})), \quad\forall q\in [1,
p^*).
\end{equation}
This and \reff{e3.4} imply that 
\begin{align}\label{e4.18}
|\nab \bu^{\vepsi,\del}|_\vepsi^{p-2} \nab \bu^{\vepsi,\del}
&\overset{\del\searrow 0}{\longrightarrow}
|\nab\bu^{\vepsi}|_\vepsi^{p-2}\nab\bu^{\vepsi}
\end{align}
weakly* in $L^\infty((0,T);L^{p'}(\Ome,\mathbf{R}^{n\times m}))$ with
$p'=\frac{p}{p-1}$ if $p\neq 1$ and weakly* in $L^{\infty}((0,T);$ 
$L^{\infty} (\Ome,\mathbf{R}^{n\times m}))$ if $p=1$.

Next, taking the wedge product of \reff{e1.18}
with $\bu^{\vepsi,\del}$ yields
\begin{equation}\label{e4.19}
\bu^{\vepsi,\del}_t\wedge \bu^{\vepsi,\del} -\Div\bigl(\,
\cB_p^{\vepsi,\del} \wedge \bu^{\vepsi,\del} \,\bigr) -\lam\,
\bg\wedge \bu^{\vepsi,\del}=0 ,
\end{equation}
where $\cB_p^{\vepsi,\del}$ is defined in \reff{e1.21}.

Testing \reff{e4.19} with any $\bw\in
L^\infty((0,T);W^{1,p^*}(\Ome,\mathbf{R}^n))$ we get
\begin{equation}\label{e4.20}
\int_0^T\Bigl\{ \bigl(\, \bu^{\vepsi,\del}_t\wedge \bu^{\vepsi,\del},\bw\,\bigr)
+\bigl(\,\cB_p^{\vepsi,\del} \wedge \bu^{\vepsi,\del}, \nab
\bw\,\bigr) -\lam\, \bigl(\,\bg\wedge
\bu^{\vepsi,\del},\bw\,\bigr)\Bigr\}\,dt=0 .
\end{equation}
It follows from setting $\del\rightarrow 0$ in \reff{e4.20} and using
\reff{e4.13}, \reff{e4.16} and \reff{e4.18} that
\begin{equation}\label{e4.21}
\int_0^T\Bigl\{ \bigl(\, \bu^{\vepsi}_t\wedge \bu^{\vepsi},\bw\,\bigr) +\bigl(\,
\cB_p^\vepsi \wedge \bu^{\vepsi}, \nab \bw\,\bigr) -\lam\,
\bigl(\,\bg\wedge \bu^{\vepsi},\bw\,\bigr)\Bigr\}\,dt=0 ,
\end{equation}
where $\cB_p^\vepsi$ is given by \reff{e4.9}.

Note that \reff{e4.15} implies
\begin{equation}\label{e4.22}
\bu^{\vepsi}_t \cdot \bu^\vepsi=0, \qquad (\cB_p^\vepsi)^T\, \bu^\vepsi=0
\qquad\mbox{a.e. in } \Ome_T.
\end{equation}
This in turn yields the following identity
\begin{equation}\label{e4.23}
\int_0^T\Bigl\{ \bigl(\, \bu^{\vepsi}_t,\bu^\vepsi\varphi\,\bigr) +\bigl(\,
\cB_p^\vepsi,\nab(\bu^\vepsi \varphi)\,\bigr) +\lam \bigl(\,
\bu^\vepsi-\bg,\bu^\vepsi
\varphi\,\bigr)\Bigr\} dt=\int_0^T\bigl(\,\mu_{p,\lam}^\vepsi,\varphi\,\bigr) dt
\end{equation}
for any $\varphi\in  L^\infty(\Ome_T)\cap L^\infty((0,T);W^{1,p^*}(\Ome))$, where 
$\mu_{p,\lam}^\vepsi$ is defined by \reff{e4.5}.

Finally, for any $\bv\in [C^1({\overline{\Ome}}_T)]^n$, taking $\bw =
\bu^\vepsi\wedge \bv$ in \reff{e4.21},
$\varphi=\bu^\vepsi\cdot\bv$ in \reff{e4.23}, and using the
formula $\bf{a}\cdot (\bf{b}\wedge\bf{c})
=(\bf{a}\wedge\bf{b})\cdot \bf{c}$ yield
\begin{align}\label{e4.24a}
&\int_0^T\Bigl\{\bigl(\bu^{\vepsi}_t,\bu^{\vepsi}\wedge(\bu^\vepsi\wedge\bv)
\bigr) +\bigl(\cB_p^\vepsi,\nab(\bu^{\vepsi}\wedge(\bu^\vepsi\wedge\bv))\bigr)
-\lam\bigl(\bg,\bu^{\vepsi}\wedge(\bu^\vepsi\wedge\bv)\bigr)\Bigr\}\,dt
= 0 ,\\
&\int_0^T\Bigl\{ \bigl(\bu^{\vepsi}_t,\bu^\vepsi(\bu^\vepsi\cdot\bv)\bigr)
+\bigl(\cB_p^\vepsi,\nab (\bu^\vepsi(\bu^\vepsi\cdot\bv))\bigr)
\label{e4.24b} \\
&\hskip 1.7in
+\lam \bigl(\bu^\vepsi-\bg,\bu^\vepsi(\bu^\vepsi\cdot\bv)\bigr)\Bigr\}
\,dt=\int_0^T \bigl(\mu_{p,\lam}^\vepsi\bu^\vepsi,\bv\bigr)\, dt.\nonumber 
\end{align}
Subtracting \reff{e4.24a} from \reff{e4.24b}, and using the identity
\begin{equation*}
\bv=(\bu^\vepsi\cdot\bv)\,\bu^\vepsi - \bu^\vepsi\wedge
(\bu^\vepsi\wedge \bv)
\end{equation*}
we obtain that
\begin{equation}\label{e4.25}
\int_0^T\Bigl\{ \bigl(\, \bu^{\vepsi}_t, \bv\,\bigr) +\bigl(\,\cB_p^\vepsi,
\nab\bv\,\bigr) +\lam \bigl(\, \bu^\vepsi-\bg,
\bv\,\bigr)\Bigr\}\,dt
=\int_0^T \bigl(\,\mu_{p,\lam}^\vepsi \bu^\vepsi,\bv\,\bigr)\,dt
\end{equation}
for any $\bv\in [C^1({\overline{\Ome}}_T)]^n$. This is equivalent to saying that
$\bu^\vepsi$ is a weak solution (in the distributional sense) to \reff{e4.2}-\reff{e4.4}.

\medskip
{\em Step 3: Wrapping up}. We conclude the proof by showing the
estimates \reff{e4.6a}-\reff{e4.8b}. First, \reff{e4.6a} follows
immediately from letting $\del\rightarrow 0$ in \reff{e3.9},
appealing to Fatou's lemma and the lower semicontinuity of
$L^2$- and $L^{p^*}$-norm with respect to $L^2$- and $L^{p^*}$-weak convergence. 
We emphasize again that this is possible in the case $p=1$, because the uniform
(in $\del$) estimate \reff{e3.5} implies that $u^\vepsi \in
L^\infty((0,T);H^1(\Ome,\mathbf{R}^n))$. \reff{e4.7} and \reff{e4.8b} are direct
consequences of \reff{e4.6a}. Finally, the bounds \reff{e4.8} and
\reff{e4.8a} follow immediately from \reff{e4.25} and \reff{e4.21},
respectively. Hence the proof is complete.
\end{proof}

\begin{remark}\label{rem3.2}
If $b_p(\vepsi)=0$ is used in the regularization, the
solutions to \reff{e4.1}-\reff{e4.4} are only expected to belong
to $L^\infty((0,T);[BV(\Ome)]^n)$ in general when $p=1$.
\end{remark}

\section{Passing to the limit as $\vepsi\rightarrow 0$: the case $1<p<\infty$}
\label{sec-5}

In this section, we shall pass to the limit as $\vepsi\rightarrow
0$ in \reff{e4.1}-\reff{e4.4} and show that the limit map is a
weak solution to \reff{e1.8}-\reff{e1.11}.  Since the analysis and
techniques for passing the limit for $1< p<\infty$ and $p=1$ are
quite different, we shall first consider the case $1< p<\infty$ in
this section and leave the case $p=1$ to the next section. We
begin with a definition of weak solutions to
\reff{e1.8}-\reff{e1.11} in the case $1<p<\infty$.

\begin{definition}\label{def5.1}
For $1<p<\infty$, a map $\bu: \Ome_T \rightarrow \mathbf{R}^n$ is called
a global {\em weak} solution to \reff{e1.8}-\reff{e1.11} if
\begin{itemize}
\item[{\rm (i)}] $\bu\in L^\infty((0,T);W^{1,p}(\Ome,\mathbf{R}^n))
\cap H^1((0,T); L^2(\Ome, \mathbf{R}^n))$,
\item[{\rm (ii)}] $|\bu|=1$ a.e. on $\Ome_T$,
\item[{\rm (iii)}] $\bu$ satisfies \reff{e1.8}-\reff{e1.11} in the
distributional sense.
\end{itemize}
\end{definition}

Our main result of this section is the following existence theorem.

\begin{theorem}\label{thm5.1}
Let $1< p<\infty$, suppose that the assumptions on $\bu_0$ and $\bg$ 
in Theorem \ref{thm4.1} still hold.
Then problem \reff{e1.8}-\reff{e1.11} has a weak solution $\bu$ 
in the sense of Definition \ref{def5.1}. Moreover, $\bu$ 
satisfies the energy inequality
\begin{equation}\label{e5.2}
J_{p,\lam}(\bu(s)) +\int_0^s \norm{\bu_t(t)}{L^2}^2\, dt \leq
J_{p,\lam}(\bu_0) \qquad\mbox{for a.e. }  s\in [0,T],
\end{equation}
where $J_{p,\lam}$ is defined by \reff{e1.7}.
\end{theorem}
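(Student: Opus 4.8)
The plan is to pass to the limit as $\vepsi\rightarrow 0$ in the $\vepsi$-dependent system \reff{e4.1}--\reff{e4.4}, mimicking the structure of the proof of Theorem \ref{thm4.1} but now with $\vepsi$ as the variable index. First I would collect the uniform (in $\vepsi$) estimates. From \reff{e4.6a} we have $J_{p,\lam}^\vepsi(\bu^\vepsi(s)) + \int_0^s\norm{\bu^\vepsi_t}{L^2}^2\,dt\leq J_{p,\lam}^1(\bu_0)$, which, together with $b_p(\vepsi)\geq 0$ and the elementary inequality $|\nab\bv|^p\leq |\nab\bv|_\vepsi^p$, gives a uniform bound on $\bu^\vepsi$ in $L^\infty((0,T);W^{1,p}(\Ome,\bR^n))$ and on $\bu^\vepsi_t$ in $L^2((0,T);L^2(\Ome,\bR^n))$; the maximum principle $|\bu^\vepsi|\leq 1$ passes through as well. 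Then by weak-$*$ compactness and Lemma \ref{lem2.3} (here is where $1<p<\infty$ is used — reflexivity of $W^{1,p}$ and the compactness lemma with $\vepsi$ as the variable index), I extract a subsequence with
\begin{align*}
\bu^\vepsi &\rightharpoonup \bu \quad\mbox{weakly-}* \mbox{ in } L^\infty((0,T);W^{1,p}(\Ome,\bR^n)),\\
\bu^\vepsi &\rightarrow \bu \quad\mbox{strongly in } L^2((0,T);L^2(\Ome,\bR^n)) \mbox{ and a.e. in } \Ome_T,\\
\bu^\vepsi_t &\rightharpoonup \bu_t \quad\mbox{weakly in } L^2((0,T);L^2(\Ome,\bR^n)),\\
\nab\bu^\vepsi &\rightarrow \nab\bu \quad\mbox{strongly in } L^q((0,T);L^q(\Ome,\bR^{n\times m})), \quad 1\leq q<p.
\end{align*}
From $|\bu^\vepsi|\leq 1$ and a.e. convergence, $|\bu|=1$ a.e. in $\Ome_T$. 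A key point, as emphasized after \reff{e4.6a}, is that the strong $L^q$-convergence of gradients lets me identify the limit of $|\nab\bu^\vepsi|_\vepsi^{p-2}\nab\bu^\vepsi$: since $a_p(\vepsi)=\vepsi\to 0$, we get $|\nab\bu^\vepsi|_\vepsi\to|\nab\bu|$ a.e., and combined with the uniform $L^\infty(L^{p'})$ bound \reff{e4.7} one concludes $|\nab\bu^\vepsi|_\vepsi^{p-2}\nab\bu^\vepsi \rightharpoonup |\nab\bu|^{p-2}\nab\bu$ weakly-$*$ in $L^\infty((0,T);L^{p'}(\Ome,\bR^{n\times m}))$. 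Also $b_p(\vepsi)\nab\bu^\vepsi\to 0$ because $b_p(\vepsi)=\vepsi^\alpha\to 0$ while $\nab\bu^\vepsi$ stays bounded in $L^\infty(L^p)$; hence $\cB_p^\vepsi \rightharpoonup \cB_p := |\nab\bu|^{p-2}\nab\bu$ in the appropriate weak sense.

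With these convergences in hand, I would carry out the wedge-product passage exactly as in Step 2 of the proof of Theorem \ref{thm4.1}. Taking the wedge product of \reff{e4.2} with $\bu^\vepsi$ kills the awkward term $\mu_{p,\lam}^\vepsi\bu^\vepsi$ (since $\bu^\vepsi\wedge\bu^\vepsi=0$), giving a weak identity of the form
\[
\int_0^T\Bigl\{\bigl(\bu^\vepsi_t\wedge\bu^\vepsi,\bw\bigr)+\bigl(\cB_p^\vepsi\wedge\bu^\vepsi,\nab\bw\bigr)-\lam\bigl(\bg\wedge\bu^\vepsi,\bw\bigr)\Bigr\}\,dt=0
\]
for all $\bw\in L^\infty((0,T);W^{1,p}(\Ome,\bR^n))$; passing $\vepsi\to 0$ here is legitimate because $\bu^\vepsi\to\bu$ strongly in every $L^r(L^r)$, $r<\infty$ (Lebesgue dominated convergence using $|\bu^\vepsi|\leq 1$), and $\cB_p^\vepsi\rightharpoonup\cB_p$ weakly, so the product of a weakly and a strongly convergent sequence converges. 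This yields \reff{e4.21} with $\vepsi$ removed. Then, using $|\bu|=1$, hence $\bu_t\cdot\bu=0$ and $\cB_p^T\bu=0$ a.e., I derive the scalar identity analogous to \reff{e4.23}, and finally combine the two by choosing $\bw=\bu\wedge\bv$ and $\varphi=\bu\cdot\bv$ for $\bv\in[C^1(\overline{\Ome}_T)]^n$ and using $\bv=(\bu\cdot\bv)\bu-\bu\wedge(\bu\wedge\bv)$ to recover
\[
\int_0^T\Bigl\{\bigl(\bu_t,\bv\bigr)+\bigl(\cB_p,\nab\bv\bigr)+\lam\bigl(\bu-\bg,\bv\bigr)\Bigr\}\,dt=\int_0^T\bigl(\mu_{p,\lam}\bu,\bv\bigr)\,dt,
\]
which is the distributional form of \reff{e1.8}--\reff{e1.11} with $\cB_p=|\nab\bu|^{p-2}\nab\bu$ and $\mu_{p,\lam}=|\nab\bu|^p+\lam(1-\bu\cdot\bg)$. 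One must check that $\mu_{p,\lam}^\vepsi\to\mu_{p,\lam}$ in the sense needed for the wedge-free identity; this follows from the strong $L^q$-convergence of $\nab\bu^\vepsi$ (so $|\nab\bu^\vepsi|_\vepsi^{p-2}|\nab\bu^\vepsi|^2\to|\nab\bu|^p$ in $L^1$, along a further subsequence if necessary, using equi-integrability from the $L^\infty(L^1)$ energy bound or passing through the wedge identities as the authors do) together with $b_p(\vepsi)|\nab\bu^\vepsi|^2\to 0$ in $L^1$. Finally, the energy inequality \reff{e5.2} follows from \reff{e4.6a} by letting $\vepsi\to 0$: $J_{p,\lam}^\vepsi(\bu_0)\to J_{p,\lam}(\bu_0)$ by monotone/dominated convergence since $b_p(\vepsi)\to 0$ and $|\nab\bu_0|_\vepsi^p\downarrow|\nab\bu_0|^p$, while $J_{p,\lam}(\bu(s))\leq\liminf J_{p,\lam}^\vepsi(\bu^\vepsi(s))$ by weak lower semicontinuity of the $L^p$-seminorm of the gradient (and $L^2$-norm) together with $b_p(\vepsi)\geq 0$, and $\int_0^s\norm{\bu_t}{L^2}^2\leq\liminf\int_0^s\norm{\bu^\vepsi_t}{L^2}^2$ by weak lower semicontinuity; Fatou in $s$ handles the a.e. statement.

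The main obstacle I expect is the identification of the weak limit of the nonlinear $p$-Laplacian term $|\nab\bu^\vepsi|_\vepsi^{p-2}\nab\bu^\vepsi$ and of the energy density $|\nab\bu^\vepsi|_\vepsi^{p-2}|\nab\bu^\vepsi|^2$ appearing in $\mu_{p,\lam}^\vepsi$. For $1<p<\infty$ this is resolved by the strong convergence of gradients supplied by Lemma \ref{lem2.3}, which genuinely requires $p>1$ (reflexivity and the structure of $\Del_p^\vepsi$); the degenerate/singular scaling as $\vepsi\to 0$ is harmless precisely because $a_p(\vepsi),b_p(\vepsi)\to 0$ and all relevant quantities are controlled uniformly by the energy inequality \reff{e4.6a}. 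The a.e. convergence of $\nab\bu^\vepsi$ plus the uniform $L^{p'}$ (resp. $L^1$) bound, via Vitali's convergence theorem, then pins down the limits. Everything else is a routine repetition of the wedge-product argument of Section \ref{sec-4}, so I would present the proof compactly, citing Theorem \ref{thm4.1} for the parts that are verbatim the same and spelling out only the passages where the role of $\vepsi$ (rather than $\del$) as the vanishing parameter changes the bookkeeping.
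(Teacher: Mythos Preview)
Your proposal is correct and follows essentially the same route as the paper: uniform estimates from \reff{e4.6a}--\reff{e4.8b}, strong gradient convergence via Lemma \ref{lem2.3}, identification of the weak limit of $|\nab\bu^\vepsi|_\vepsi^{p-2}\nab\bu^\vepsi$, passage to the limit in the wedge identity \reff{e4.21}, the scalar identity from $|\bu|=1$, and their algebraic combination.

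One small comment: your aside about needing $\mu_{p,\lam}^\vepsi\to\mu_{p,\lam}$ is unnecessary, and in fact the specific claim that $b_p(\vepsi)|\nab\bu^\vepsi|^2\to 0$ in $L^1$ is not obvious for $1<p<2$ (from \reff{e4.8b} you only get $b_p(\vepsi)\norm{\nab\bu^\vepsi}{L^2}^2\leq C$, bounded but not vanishing). The paper avoids this entirely: \reff{e5.14} is derived \emph{directly} from $|\bu|=1$ for the limit $\bu$ --- no $\vepsi$-limit is taken there --- and then \reff{e5.15} comes from the purely algebraic combination of \reff{e5.13} and \reff{e5.14}. You already note this escape route (``or passing through the wedge identities as the authors do''), so just drop the attempt to pass to the limit in $\mu_{p,\lam}^\vepsi$ and the argument is clean.
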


\begin{proof}
We divide the proof into three steps.

\medskip
{\em Step 1: Extracting a convergent subsequence}.
Let $\bu^\vepsi$ denote the solution of \reff{e4.1}-\reff{e4.4} constructed in
Theorem \ref{thm4.1}.  
From \reff{e4.1}, \reff{e4.6a}--\eqref{e4.8b}, the weak compactness
of $W^{1,p}(\Ome)$ and Sobolev embedding
(cf. \cite{adams75,simon87}), there exists a subsequence of
$\{\bu^{\vepsi}\}_{\vepsi>0}$ (still denoted by the same notation)
and a map $\bu \in L^\infty((0,T); W^{1,p}(\Ome,\mathbf{R}^n))
\cap H^1((0,T); L^2(\Ome,\mathbf{R}^n))$ such that as $\vepsi\rightarrow 0$
\begin{alignat}{2}\label{e5.3}
\bu^{\vepsi} &\longrightarrow \bu &&\qquad\mbox{weakly* in }
L^\infty((0,T);W^{1,p}(\Ome,\mathbf{R}^n)), \\
& &&\qquad\mbox{strongly in } L^2((0,T);L^p(\Ome,\mathbf{R}^n)), \label{e5.4}\\
& &&\qquad\mbox{a.e. in } \Ome_T, \label{e5.5} \\
b_p(\vepsi)\nab\bu^\vepsi &\longrightarrow 0 &&\qquad\mbox{weakly
in }L^2((0,T);L^2(\Ome,\mathbf{R}^{n\times m})), \label{e5.6a}\\
\bu^{\vepsi}_t &\longrightarrow \bu_t  &&\qquad\mbox{weakly in }
L^2((0,T);L^2(\Ome,\mathbf{R}^{n})). \label{e5.6}
\end{alignat}

It follows immediately from \reff{e5.5} and \reff{e4.15} that
\begin{equation}\label{e5.8}
|\bu| = 1\qquad\mbox{a.e. in } \Ome_T .
\end{equation}

\medskip
{\em Step 2: Passing to the limit.} First, by \reff{e5.5} 
and the Lebesgue dominated convergence theorem we have that
\begin{equation}\label{e5.9}
\bu^{\vepsi} \overset{\vepsi\searrow 0}{\longrightarrow}\bu
\qquad\mbox{strongly in } L^r((0,T);L^r(\Ome,\mathbf{R}^n)),\quad\forall
r\in [1, \infty).
\end{equation}

Next, let $\bbf^\vepsi:=\mu_{p,\lam}^\vepsi \bu^\vepsi-\lam\,(\bu^\vepsi-\bg)$.
Clearly, $\bbf^\vepsi\in L^1((0,T);L^1(\Ome;\mathbf{R}^n))$ and is uniformly
bounded, on noting \reff{e4.5}-\reff{e4.6a}. By Lemma \ref{lem2.3} we get
\begin{equation}\label{e5.10}
\nab \bu^{\vepsi} \overset{\vepsi\searrow 0}{\longrightarrow}
\nab\bu\qquad\mbox{strongly in }
L^q((0,T);L^q(\Ome,\mathbf{R}^{n\times m})), \quad\forall q\in [1, p),
\end{equation}
which, \reff{e4.7} and \reff{e5.6a} imply that
\begin{align}\label{e5.11}
&|\nab \bu^\vepsi|_\vepsi^{p-2} \nab \bu^\vepsi
\overset{\vepsi\searrow 0}{\longrightarrow} |\nab\bu|^{p-2}\nab\bu
\quad\mbox{weakly* in } L^\infty((0,T);L^{p'}(\Ome,\mathbf{R}^{n\times m})), \\
&\cB_p^\vepsi \overset{\vepsi\searrow 0}{\longrightarrow}
\cB_p:=|\nab\bu|^{p-2}\nab\bu \quad\mbox{weakly in }
L^2((0,T);L^{p^\prime_*}(\Ome,\mathbf{R}^{n\times m})) , \label{e5.12}
\end{align}
where $p'=\frac{p}{p-1}$ and $p^\prime_*:=\min\{2, p^\prime\}$.

It then follows from taking $\vepsi\rightarrow 0$ in \reff{e4.21}
and using \reff{e5.6}, \reff{e5.9}, and \reff{e5.12} that
\begin{equation}\label{e5.13}
\int_0^T\Bigl\{ \bigl(\, \bu_t\wedge \bu,\bw\,\bigr) +\bigl(\,
\cB_p \wedge \bu, \nab \bw\,\bigr) -\lam\,
\bigl(\,\bg\wedge \bu,\bw\,\bigr) \Bigr\}\,dt =0 
\end{equation}
for any $\bw\in C^1(\overline{\Ome}_T)$. Since 
$\cB_p\in L^\infty((0,T);L^{p'}(\Ome,\mathbf{R}^{n\times m}))$, 
by the standard density argument one can show that
\eqref{e5.13} also holds for all $\bw\in L^\infty((0,T);W^{1,p}(\Ome,\mathbf{R}^n))
\cap L^\infty(\Ome_T)$.

Since $|\bu|=1$ a.e. in $\Ome_T$, there holds the following identity,
which is analogous to \reff{e4.23},
\begin{equation}\label{e5.14}
\int_0^T\Bigl\{ \bigl(\, \bu_t,\bu\varphi\,\bigr) +\bigl(\,\cB_p,\nab(\bu
\varphi)\,\bigr) +\lam \bigl(\, \bu-\bg ,\bu
\varphi\,\bigr) \Bigr\}\,dt
=\int_0^T \bigl(\,\mu_{p,\lam},\varphi\,\bigr)\,dt
\end{equation}
for any $\varphi\in L^\infty(\Ome_T)\cap L^\infty((0,T);W^{1,p}(\Ome,\mathbf{R}^n))$; 
where $\mu_{p,\lam}$ is defined by \reff{e1.11a}.

Finally, for any $\bv\in [C^1({\overline{\Ome}}_T)]^n$, on choosing $\bw =
\bu\wedge \bv$ in \reff{e5.13} and $\varphi=\bu\cdot\bv$ in
\reff{e5.14}, subtracting the resulting equations and using the
identity
\begin{equation*}
\bv=(\bu\cdot\bv)\,\bu - \bu\wedge (\bu\wedge \bv);
\end{equation*}
we obtain that
\begin{equation}\label{e5.15}
\int_0^T\Bigl\{ \bigl(\, \bu_t, \bv\,\bigr) +\bigl(\,\cB_p,
\nab \bv\,\bigr) +\lam \bigl(\, \bu-\bg , \bv\,\bigr)\Bigr\}\,dt
= \int_0^T \bigl(\,\mu_{p,\lam} \bu,\bv\,\bigr)\,dt
\end{equation}
for any $\bv\in [C^1({\overline{\Ome}}_T)]^n$.
Hence, $\bu$ is a weak solution to \reff{e1.8}-\reff{e1.11}.

\medskip
{\em Step 3: Wrapping up}. We conclude the proof by showing the
energy inequality \reff{e5.2}. First, notice that \reff{e4.6a} implies
that
\begin{equation}\label{e5.16}
\int_\Ome\, \Bigl\{ \frac{1}{p} |\nab \bu^\vepsi(s)|^p
+ \frac{\lam}{2} |\bu^\vepsi(s)-\bg|^2 \,\Bigr\} dx
+ \int_0^s \norm{\bu^{\vepsi}_t(t)}{L^2}^2\, dt
\leq J_{p,\lam}^{\vepsi}(\bu_0) \quad\forall s\in [0,T].
\end{equation}
Then \reff{e5.2} follows from taking $\vepsi\rightarrow 0$ in
\reff{e5.16}, using Fatou's lemma and the lower semicontinuity
of the $L^p$-norm with respect to $L^p$-weak convergence.
The proof is complete.
\end{proof}

\begin{remark}\label{rem5.2}
(a). We remark that it was proved in \cite{Co,Hu1,M2} that weak
solutions to \reff{e1.8}-\reff{e1.11} are not unique in general.

(b). Although the above proof is carried out for any $\alpha> 0$ in
the definition of $b_p(\vepsi)$ (cf. \reff{e1.13}), $\alpha$
should be chosen large enough so that the error due to the
perturbation term $b_p(\vepsi)\Del$ is much smaller than the error
due to other regularization terms in numerical simulations. 

(c). The existence result of Theorem \ref{thm5.1} is established 
under the assumption $\bu_0\in W^{1,p*}(\Ome,\mathbf{R}^n)$
with $p*=\max\{p,2\}$. This condition can be weakened to 
$\bu_0\in W^{1,p}(\Ome,\mathbf{R}^n)$ in the case $1<p<2$ 
by a smoothing technique.
\end{remark}

\section{Passing to the limit as $\vepsi\rightarrow 0$: the case $p=1$}
\label{sec-6}

In this section, we consider the case $p=1$ and establish 
the existence of global weak solutions for the
$1$-harmonic map heat flow \reff{e1.8}-\reff{e1.11} by passing to
the limit as $\vepsi\rightarrow 0$ in \reff{e4.2}-\reff{e4.4}.
There are two main difficulties which prevent one to repeat the
analysis and techniques of the previous section. Firstly, the
compactness result of Lemma \ref{lem2.3} does not hold any more
when $p=1$. Secondly, since the sequence
$\{\bu^\vepsi\}_{\vepsi>0}$ is uniformly bounded only in
$L^\infty((0,T);[W^{1,1}(\Ome)\cap L^\infty(\Ome)]^n)$, and
$W^{1,1}(\Ome,\mathbf{R}^n)$ is not a reflexive Banach space,
hence, the limiting map $\bu$ now belongs to $L^\infty((0,T);
[BV(\Ome)\cap L^\infty(\Ome)]^n)$, i.e., $\bu(t)$ is only a map of
bounded variation. As expected, these two difficulties make the
passage to the limit as $\vepsi\rightarrow 0$ become more difficult and
delicate. 

\subsection{Technical tools and lemmas} \label{sec-6.1}

In this subsection, we shall cite some technical tools and 
lemmas in order to develop a weak solution concept to be given in the
next subsection for the $1$-harmonic map heat flow. 
Specially, we need the pairings $\mathcal{A}\cdot D\mathbf{v}$
and $\mathcal{A}\wedge D\mathbf{v}$  between a tensor field
$\mathcal{A}$ and a $BV$-vector field $\mathbf{v}$, which was
developed in \cite{F1} as a generalization of the pairing
$\mathbf{b}\cdot Dv$ between a vector field  $\mathbf{b}$
and a $BV$-function $v$ developed in \cite{An,CF}.

We recall from \cite{F1} the space of {\em divergence-$L^q$ tensors}
\begin{equation}\label{e6.12}
\bY(\Ome)_q:=\{ \cA\in L^\infty(\Ome;\bR^{n\times m});\, 
\Div \cA\in L^q(\Ome;\bR^n) \} \qquad\mbox{for } 1\leq q <\infty,
\end{equation}
and that $\cA\cdot D\bv$ and $\cA\wedge D\bv$ are defined as
\begin{definition}\label{def6.0}
For any $\cA\in \bY(\Ome)_1$ and $\bv \in [BV(\Ome)\cap L^\infty(\Ome)]^n$,
we define $\cA\cdot D\bv$ and $\cA\wedge D\bv$ to be the functionals on 
$C^\infty_0(\Ome)$ and $[C^\infty_0(\Ome)]^n$, respectively by 
\begin{align}\label{e6.13}
&\langle \cA\cdot D\bv,\psi\rangle
:=-\int_\Ome\, (\cA^T\bv)\cdot \nab\psi\, dx
-\int_\Ome(\Div\cA\cdot\bv) \psi\, dx \qquad\forall \psi\in C^\infty_0(\Ome),
\\
&\langle\cA\wedge D\bv, \bw\rangle 
:=-\int_\Ome (\cA\wedge \bv)\cdot \nab\bw\, dx
-\int_\Ome (\Div\cA\wedge \bv)\cdot\bw \,dx \,\,
\forall\bw\in [C^\infty_0(\Ome)]^n;  \label{e6.14}
\end{align}
where $\cA^T$ stands for the matrix transpose of $\cA$ and the notation
\eqref{e2.4} is used in \eqref{e6.14}.
\end{definition}

We now list some properties of the pairings $\cA\cdot D\bv$ and $\cA\wedge D\bv$,
and refer to Section 2 of \cite{F1} for their proofs. The first lemma 
declares that $\cA\cdot D\bv$ and $\cA\wedge D\bv$ are Radon measures in $\Ome$.

\begin{lemma}\label{lem6.2} 
For any Borel set $E\subset\Ome$, there hold
\begin{align*}
&\bigl|\langle\cA\cdot D\bv,\psi\rangle\bigl|
\leq \max_{x\in E}|\psi(x)|\cdot \norm{\cA}{L^\infty(E,\bR^{n\times m})}
\cdot |D\bv|(E)\quad \forall \psi\in C_0(E),\\
&\bigl|\langle\cA\wedge D\bv,\bw\rangle\bigl|
\leq \max_{x\in E}|\bw(x)|\cdot \norm{\cA}{L^\infty(E,\bR^{n\times m})}
\cdot |D\bv|(E)\quad\forall \bw\in [C_0(E)]^n. 
\end{align*}
Hence, it follows from the Riesz Theorem (cf. Theorem 1.54 of \cite{AFP}) 
that both functionals $\cA\cdot D\bv$ and $\cA\wedge D\bv$ are Radon 
measures in $\Ome$.
\end{lemma}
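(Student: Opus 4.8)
The plan is to reduce the two estimates to the smooth case and then pass to the limit, following the pattern used in \cite{An,CF,F1}. First I would fix $\cA\in\bY(\Ome)_1$ and $\bv\in[BV(\Ome)\cap L^\infty(\Ome)]^n$ and take mollifications $\bv_k\in[C^\infty(\Ome)\cap W^{1,1}(\Ome)]^n$, so that $\bv_k\to\bv$ in $[L^1(\Ome)]^n$ and $\cL^n$-a.e., $\|\bv_k\|_{L^\infty}\le\|\bv\|_{L^\infty}$, and, by the classical smooth approximation of $BV$ functions (cf.\ \cite{AFP}), $|\nabla\bv_k|\,\cL^n\rightharpoonup|D\bv|$ weakly-$*$ in $\cM(\Ome)$. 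For such smooth $\bv_k$ a direct integration by parts in \eqref{e6.13}--\eqref{e6.14} shows $\langle\cA\cdot D\bv_k,\psi\rangle=\int_\Ome\psi\,(\cA:\nabla\bv_k)\,dx$ and $\langle\cA\wedge D\bv_k,\bw\rangle=\int_\Ome\bw\cdot(\cA\wedge\nabla\bv_k)\,dx$ (with $\cA\wedge\nabla\bv_k$ understood through \eqref{e2.4}); the extra terms produced by differentiating the products cancel precisely against the $\Div\cA$ contributions already present in the definitions.

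Next I would let $k\to\infty$ on both sides. On the left-hand sides the first integrals in \eqref{e6.13}--\eqref{e6.14} converge because $\bv_k\to\bv$ in $L^1$ and $\cA$, $\nabla\psi$ (resp.\ $\nabla\bw$) are bounded, while for the integrals against $\Div\cA$ I would use that $\Div\cA\in L^1(\Ome;\bR^n)$ together with $|\bv_k|\le\|\bv\|_{L^\infty}$ and the a.e.\ convergence, so dominated convergence applies; hence $\langle\cA\cdot D\bv_k,\psi\rangle\to\langle\cA\cdot D\bv,\psi\rangle$ and $\langle\cA\wedge D\bv_k,\bw\rangle\to\langle\cA\wedge D\bv,\bw\rangle$. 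On the right-hand sides, for a test function with compact support $K$ I would choose an open $U$ with $K\subset U$ and $\overline U\subset E$ (treating $E$ open first), estimate $|\int_\Ome\psi\,(\cA:\nabla\bv_k)\,dx|\le\max_{x\in E}|\psi(x)|\cdot\|\cA\|_{L^\infty(E,\bR^{n\times m})}\int_{\overline U}|\nabla\bv_k|\,dx$, and invoke upper semicontinuity of the weak-$*$ convergence on the closed set $\overline U$ (legitimate since the total masses $|\nabla\bv_k|\,\cL^n(\Ome)\to|D\bv|(\Ome)$), giving $\limsup_k\int_{\overline U}|\nabla\bv_k|\,dx\le|D\bv|(\overline U)\le|D\bv|(E)$. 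Combining the two limits yields both displayed inequalities for $E$ open, and the wedge case is identical once one notes $|\cA\wedge\nabla\bv_k|\le c(n)\,|\cA|\,|\nabla\bv_k|$.

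Finally, taking $E=\Ome$ shows $\psi\mapsto\langle\cA\cdot D\bv,\psi\rangle$ and $\bw\mapsto\langle\cA\wedge D\bv,\bw\rangle$ are bounded on $C_0(\Ome)$ and $[C_0(\Ome)]^n$, so the Riesz representation theorem (Theorem 1.54 of \cite{AFP}) furnishes the representing finite Radon measures; the localized inequality on an arbitrary Borel set $E$ then follows by outer regularity, approximating $E$ from outside by open sets simultaneously in $|\cA\cdot D\bv|$ (resp.\ $|\cA\wedge D\bv|$) and in $|D\bv|$. The one genuinely delicate point is the low regularity: passing to the limit in $\int_\Ome(\Div\cA\cdot\bv_k)\,\psi\,dx$ when $\Div\cA$ lies only in $L^1$ is exactly where the $L^\infty$ bound on $\bv$, hence on the $\bv_k$, is indispensable; the rest is the standard sandwiching of supports between a compact set and an open set that is routine in this setting.
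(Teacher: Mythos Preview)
Your approach is correct and is precisely the Anzellotti-type mollification argument the paper defers to \cite{F1} for (the paper states Lemma~\ref{lem6.2} without proof, referring to \cite{An,CF,F1}). The only point to tighten is the constant in your wedge estimate: to recover the bound as stated you need $c(n)=1$, which holds because $|\ba\wedge\bb|\le|\ba|\,|\bb|$ for vectors and then Cauchy--Schwarz across the column sum in \eqref{e2.4} gives $|\cA\wedge\nabla\bv_k|\le|\cA|\,|\nabla\bv_k|$ with no dimensional factor.
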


\begin{corollary}\label{cor6.1}
The measures $\cA\cdot D\bv$, $|\cA\cdot D\bv|$, $\cA\wedge D\bv$, and
$|\cA\wedge D\bv|$ all are absolutely continuous with respect to the measure
$|D\bv|$ in $\Ome$. Moreover, there hold inequalities
\begin{eqnarray}\label{e6.18}
\bigl|(\cA\cdot D\bv)(E)\bigr| &\leq& \bigl|\cA\cdot D\bv\bigr|(E)
\leq \norm{\cA}{L^\infty(E',\bR^{n\times m})}\cdot |D\bv|(E), \\
\bigl|(\cA\wedge D\bv)(E)\bigr| &\leq& \bigl|\cA\wedge D\bv\bigr|(E)
\leq \norm{\cA}{L^\infty(E',\bR^{n\times m})}\cdot |D \bv|(E) \label{e6.19}
\end{eqnarray}
for all Borel sets $E$ and for all open sets $E'$ such 
that $E\subset E'\subset\Ome$.

Hence, by the Radon-Nikod\'ym Theorem (cf. Theorem 1.28 of \cite{AFP}), there
exist $|D\bv|$-measurable functions $\Theta:=\Theta(\cA,D\bv,x): 
\Ome\rightarrow \bR$,
and $\Lambda:=\Lambda(\cA,D\bv,x): \Ome\rightarrow \bR^n$ such that
\begin{alignat}{2}\label{e6.20}
(\cA\cdot D\bv)(E) &= \int_E\, \Theta\, d|D\bv|, &&\quad\mbox{and}\quad 
\norm{\Theta}{L^\infty(\Ome,|D\bv|)} \leq \norm{\cA}{L^\infty(\Ome,\bR^{n\times m})};\\
(\cA\wedge D\bv)(E) &= \int_E\, \Lambda\, d|D\bv|, &&\quad\mbox{and}\quad
\norm{\Lambda}{L^\infty(\Ome,|D\bv|)} \leq \norm{\cA}{L^\infty(\Ome,\bR^{n\times m})}
\label{e6.21}
\end{alignat}
for all Borel sets $E\subset \Ome$.
\end{corollary}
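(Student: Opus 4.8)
The plan is to deduce Corollary \ref{cor6.1} directly from Lemma \ref{lem6.2}, treating $\cA\cdot D\bv$ and $\cA\wedge D\bv$ in parallel since the arguments are identical. First I would establish absolute continuity with respect to $|D\bv|$: if $E\subset\Ome$ is a Borel set with $|D\bv|(E)=0$, then by outer regularity of the Radon measure $|D\bv|$ we may choose open sets $E'_k\supset E$ with $|D\bv|(E'_k)\to 0$; applying the first inequality of Lemma \ref{lem6.2} with $E$ replaced by $E'_k$ (using test functions supported in $E'_k$) and taking the supremum over $\psi\in C_0(E'_k)$, $\max|\psi|\le 1$, gives $|\cA\cdot D\bv|(E'_k)\le \norm{\cA}{L^\infty}\,|D\bv|(E'_k)\to 0$, hence $|\cA\cdot D\bv|(E)=0$. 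This shows $\cA\cdot D\bv$, and therefore its total variation, vanishes on $|D\bv|$-null sets; the same for the wedge pairing.

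Next I would prove the quantitative bounds \reff{e6.18}--\reff{e6.19}. The inequality $|(\cA\cdot D\bv)(E)|\le |\cA\cdot D\bv|(E)$ is just the definition of total variation of a signed (or vector) measure. For the upper bound by $\norm{\cA}{L^\infty(E',\bR^{n\times m})}\,|D\bv|(E)$, fix an open set $E'$ with $E\subset E'\subset\Ome$. For any Borel partition $\{E_j\}$ of $E$ and any $\psi$ supported in $E'$, Lemma \ref{lem6.2} applied on $E'$ controls $|\langle\cA\cdot D\bv,\psi\rangle|$ by $\max|\psi|\cdot\norm{\cA}{L^\infty(E',\bR^{n\times m})}\cdot|D\bv|(E')$; approximating the indicator of each $E_j$ and summing, one gets $\sum_j|(\cA\cdot D\bv)(E_j)|\le \norm{\cA}{L^\infty(E',\bR^{n\times m})}\,|D\bv|(E')$. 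To sharpen $|D\bv|(E')$ down to $|D\bv|(E)$ one takes the infimum over open $E'\downarrow E$ and invokes outer regularity of $|D\bv|$ once more; combined with the absolute continuity already shown, this yields \reff{e6.18}. The wedge case is handled the same way using the vector-valued test functions $\bw$ and the second inequality of Lemma \ref{lem6.2}.

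Finally, the existence of the densities $\Theta$ and $\Lambda$ is an immediate application of the Radon--Nikod\'ym theorem (Theorem 1.28 of \cite{AFP}): since $\cA\cdot D\bv$ is a real Radon measure absolutely continuous with respect to the positive finite measure $|D\bv|$, there is a $|D\bv|$-integrable $\Theta$ with $(\cA\cdot D\bv)(E)=\int_E\Theta\,d|D\bv|$; similarly for the $\bR^n$-valued measure $\cA\wedge D\bv$ one obtains $\Lambda\in L^1(\Ome,|D\bv|;\bR^n)$. The $L^\infty$ bounds $\norm{\Theta}{L^\infty(\Ome,|D\bv|)}\le\norm{\cA}{L^\infty}$ and likewise for $\Lambda$ follow from \reff{e6.18}--\reff{e6.19}: for any Borel $E$, $|\int_E\Theta\,d|D\bv||\le\norm{\cA}{L^\infty(\Ome,\bR^{n\times m})}\,|D\bv|(E)$, and a standard argument (testing against $E=\{\Theta>\norm{\cA}{L^\infty}+\tau\}$) forces this set to be $|D\bv|$-null.

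The only delicate point is the localization in the upper bounds, i.e.\ replacing the global $\norm{\cA}{L^\infty(\Ome)}$ by the restriction norm $\norm{\cA}{L^\infty(E',\bR^{n\times m})}$ and $|D\bv|(E')$ by $|D\bv|(E)$; this is where outer regularity of the measure $|D\bv|$ and the freedom to shrink $E'$ are essential, and where one must be careful that the test functions in Lemma \ref{lem6.2} are supported in $E'$ so that only the values of $\cA$ on $E'$ enter. Everything else is a routine invocation of measure-theoretic facts recorded in \cite{AFP}.
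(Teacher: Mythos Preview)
The paper does not provide its own proof of this corollary; it lists the result among the properties of the pairings $\cA\cdot D\bv$ and $\cA\wedge D\bv$ and refers to Section~2 of \cite{F1} for all proofs. Your argument is the natural deduction from Lemma~\ref{lem6.2} and is essentially correct.

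Two small points of exposition. First, when you write ``take the infimum over open $E'\downarrow E$'' to replace $|D\bv|(E')$ by $|D\bv|(E)$, be careful that in the statement $E'$ is \emph{fixed}. The clean way is: given $E\subset E'$ and $\tau>0$, choose by outer regularity an open $U$ with $E\subset U\subset E'$ and $|D\bv|(U)<|D\bv|(E)+\tau$; then Lemma~\ref{lem6.2} applied on $U$ gives
\[
|\cA\cdot D\bv|(E)\le|\cA\cdot D\bv|(U)\le\norm{\cA}{L^\infty(U,\bR^{n\times m})}\,|D\bv|(U)\le\norm{\cA}{L^\infty(E',\bR^{n\times m})}\bigl(|D\bv|(E)+\tau\bigr),
\]
and letting $\tau\to0$ yields \reff{e6.18}. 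Second, the detour through Borel partitions and approximating indicators is unnecessary: for an open set $U$ the identity $|\mu|(U)=\sup\{\langle\mu,\psi\rangle:\psi\in C_0(U),\ |\psi|\le1\}$ already gives $|\cA\cdot D\bv|(U)\le\norm{\cA}{L^\infty(U,\bR^{n\times m})}\,|D\bv|(U)$ directly from Lemma~\ref{lem6.2}, which is all you need before the outer-regularity step above.
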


The second lemma declares that every $\cA\in \bY(\Ome)_1$ has a well-behaved 

{\em traction} $\cA\bn$ on the boundary of a Lipschitz domain $\Ome$. 

\begin{lemma}\label{lem6.3} 
Let $\Ome$ be a bounded domain with a Lipschitz continuous boundary 
$\p\Ome$ in $\bR^m$, then there exists a linear operator 
$\beta:\bY(\Ome)_1\rightarrow L^\infty(\p\Ome;\bR^n)$ such that
\begin{align}\label{e6.22}
\norm{\beta(\cA)}{L^\infty(\p\Ome,\bR^n)} 
&\leq \norm{\cA}{L^\infty(\Ome,\bR^{n\times m})}, \\
\langle\cA,\bv\rangle_{\p\Ome} &=\int_{\p\Ome}\, \beta(\cA)(x)\,\bv(x)\,d\cH^{m-1}
\quad\forall \bv\in [BV(\Ome)\cap L^\infty(\Ome)]^n, \label{e6.23} \\
\beta(\cA)(x)&= \cA(x)\bn(x) \qquad\forall x\in\p\Ome,\;
\cA\in C^1(\overline{\Ome},\bR^{n\times m}). \label{e6.24}
\end{align}
\end{lemma}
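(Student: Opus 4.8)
\textbf{Proof plan for Lemma \ref{lem6.3}.}
The plan is to construct the traction operator $\beta$ by the classical duality argument for divergence-measure (here divergence-$L^1$) fields, adapted to the vector/tensor setting, exactly as one does for the normal trace of a vector field $\mathbf{b}$ with $\Div\mathbf{b}\in L^1$. First I would fix $\cA\in\bY(\Ome)_1$ and define, for $\bv\in[BV(\Ome)\cap L^\infty(\Ome)]^n$, the quantity
\begin{equation*}
\langle\cA,\bv\rangle_{\p\Ome}:=\int_\Ome (\cA^T\bv)\cdot\nab\phi\,dx+\int_\Ome(\Div\cA\cdot\bv)\,\phi\,dx+\langle\cA\cdot D\bv,\phi\rangle,
\end{equation*}
where $\phi\in C^1(\overline\Ome)$ is any fixed cut-off with $\phi\equiv 1$ near $\p\Ome$; the first step is to check, via Definition \ref{def6.0}, that this value is independent of the choice of $\phi$ (the difference of two choices is supported in the interior, where the Gauss--Green identity defining $\cA\cdot D\bv$ makes the bracket vanish). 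This produces a linear functional $\bv\mapsto\langle\cA,\bv\rangle_{\p\Ome}$; the combination is tailored so that when $\cA\in C^1(\overline\Ome,\bR^{n\times m})$ an honest integration by parts collapses it to $\int_{\p\Ome}(\cA\bn)\cdot\bv\,d\cH^{m-1}$, which will give \reff{e6.24} at the end.

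Next I would establish the bound $|\langle\cA,\bv\rangle_{\p\Ome}|\leq\norm{\cA}{L^\infty}\,\norm{\bv}{L^1(\p\Ome)}$ for $\bv$ restricted to the boundary. The standard route is: for $\bw\in[C^\infty(\overline\Ome)]^n$ first, use a Gauss--Green identity together with the estimate of Lemma \ref{lem6.2} (or Corollary \ref{cor6.1}, \reff{e6.18}) applied on a thin collar $\Ome\setminus\Ome_\sigma$ of the boundary to get
\begin{equation*}
|\langle\cA,\bw\rangle_{\p\Ome}|\leq\norm{\cA}{L^\infty(\Ome\setminus\Ome_\sigma,\bR^{n\times m})}\,\bigl(\,|D\bw|(\Ome\setminus\Ome_\sigma)+\textstyle\int_{\Ome\setminus\Ome_\sigma}|\Div\cA\cdot\bw|\,dx\,\bigr)+\cdots,
\end{equation*}
and then let $\sigma\to 0$: since the boundary trace of $\bw$ controls the collar mass of $D\bw$ as the collar shrinks (this is where Lipschitz regularity of $\p\Ome$ enters, to flatten the boundary and use a one-dimensional trace estimate), one arrives at $|\langle\cA,\bw\rangle_{\p\Ome}|\leq\norm{\cA}{L^\infty(\Ome,\bR^{n\times m})}\int_{\p\Ome}|\bw|\,d\cH^{m-1}$. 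By density of $C^\infty(\overline\Ome)$ traces in $L^1(\p\Ome,\cH^{m-1})$ and the fact that $\langle\cA,\bv\rangle_{\p\Ome}$ depends on $\bv$ only through its $BV$-trace on $\p\Ome$, this inequality extends to all $\bv\in[BV(\Ome)\cap L^\infty(\Ome)]^n$. Hence $\bv|_{\p\Ome}\mapsto\langle\cA,\bv\rangle_{\p\Ome}$ is a bounded linear functional on (a dense subspace of) $L^1(\p\Ome,\bR^n)$ with norm $\leq\norm{\cA}{L^\infty}$, so by the Riesz representation of $(L^1)'=L^\infty$ there is a unique $\beta(\cA)\in L^\infty(\p\Ome,\bR^n)$ representing it, which gives \reff{e6.22} and \reff{e6.23}; linearity of $\cA\mapsto\beta(\cA)$ is immediate from linearity of the defining formula. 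The smooth case \reff{e6.24} then follows because both $\beta(\cA)$ and $\cA\bn$ represent the same functional and $\cH^{m-1}$-a.e.\ agreement of two $L^\infty(\p\Ome)$ functions forces pointwise agreement (after modifying on a null set, and genuinely pointwise when $\cA$ is continuous up to the boundary).

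The step I expect to be the main obstacle is the collar-limit estimate controlling $|D\bv|$ of a shrinking boundary neighborhood by the $L^1(\p\Ome)$ norm of the trace — that is, making precise that $\limsup_{\sigma\to0}\sigma^{-1}|D\bv|(\{0<\dist(x,\p\Ome)<\sigma\})$ is bounded in terms of $\norm{\bv}{L^1(\p\Ome)}$ (or, more carefully, passing the limit so that only the boundary trace survives). This is exactly the technical heart of the pairing theory in \cite{F1} (and its scalar antecedents \cite{An,CF}), requiring the Lipschitz charts to flatten $\p\Ome$, a Fubini/slicing argument in the normal direction, and the one-dimensional fact that a $BV$ function on an interval has a trace at the endpoint bounded by its total variation plus its $L^1$ norm. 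Everything else — independence of the cut-off, the density argument, the Riesz representation, and identifying $\beta(\cA)=\cA\bn$ in the smooth case — is routine once this collar estimate is in hand; accordingly I would cite Section 2 of \cite{F1} for the details of this step rather than reproduce the charts-and-slicing computation, and present the rest as above.
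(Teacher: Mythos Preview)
Your sketch is the standard Anzellotti--Chen--Frid construction adapted to the tensor setting, and it is correct; note, however, that the paper itself supplies \emph{no} proof for this lemma. The sentence preceding the block of lemmas in Section \ref{sec-6.1} says explicitly that these properties are listed and their proofs are deferred to Section 2 of \cite{F1}. So there is nothing to compare against: the paper's ``proof'' is purely a citation, and your plan --- which both outlines the classical duality/collar argument and, for the delicate collar-trace estimate, likewise falls back on \cite{F1} (and its scalar antecedents \cite{An,CF}) --- is entirely compatible with that and in fact more informative.
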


\begin{remark}
Since $\beta(\cA)$ is a weakly defined traction of $\cA$ on $\p\Ome$, 
hence we shall use $\cA\bn$ to denote $\beta(\cA)$ in the rest of this section.
\end{remark}

The third lemma declares that the following hold. 

\begin{lemma}\label{lem6.4}
Let $\Ome$ be a bounded domain with a Lipschitz continuous boundary 
$\p\Ome$ in $\bR^m$, then for any $\cA\in \bY(\Ome)_1$ and 
$\bv\in [BV(\Ome)\cap L^\infty(\Ome)]^n$ there hold identities
\begin{eqnarray}\label{e6.25}
\int_\Ome\,\Div\cA\cdot \bv\, dx + (\cA\cdot D\bv)(\Ome) 
&=&\int_{\p\Ome}\, \cA\bn\cdot\bv\, d\cH^{m-1},\\
\int_\Ome\,\Div\cA\wedge \bv\, dx + (\cA\wedge D\bv)(\Ome) 
&=&\int_{\p\Ome}\, \cA\bn\wedge\bv\, d\cH^{m-1}. \label{e6.26}
\end{eqnarray}
\end{lemma}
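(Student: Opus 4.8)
\textbf{Proof proposal for Lemma \ref{lem6.4}.}

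The plan is to establish the two identities \eqref{e6.25} and \eqref{e6.26} by a density/regularization argument, reducing to the classical case $\cA\in C^1(\overline\Ome,\bR^{n\times m})$, where both are simply the divergence theorem applied componentwise. First I would treat the case $\cA\in C^1(\overline\Ome,\bR^{n\times m})$ and $\bv\in [C^1(\overline\Ome)]^n$: here $\cA\cdot D\bv$ is the ordinary $L^1$-function $\sum_j \cA^{(j)}\cdot\p_j\bv$, the product rule gives $\Div(\cA^T\bv) = \Div\cA\cdot\bv + \cA\cdot D\bv$ (with the analogous wedge identity $\Div(\cA\wedge\bv)=\Div\cA\wedge\bv+\cA\wedge D\bv$ using the convention \eqref{e2.4}), and integrating over $\Ome$ and applying the classical divergence theorem on the Lipschitz domain $\Ome$ yields \eqref{e6.25}--\eqref{e6.26} with $\cA\bn$ the genuine trace, which agrees with $\beta(\cA)$ by \eqref{e6.24}.

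Next I would pass from $C^1$ data to the general case in two stages. Stage one: fix $\cA\in C^1(\overline\Ome,\bR^{n\times m})$ and approximate $\bv\in [BV(\Ome)\cap L^\infty(\Ome)]^n$ by $\bv_k\in [C^\infty(\overline\Ome)\cap W^{1,1}(\Ome)]^n$ with $\bv_k\to\bv$ strictly in $BV$ (so $\bv_k\to\bv$ in $L^1$, $|D\bv_k|(\Ome)\to|D\bv|(\Ome)$) and with a uniform $L^\infty$ bound and $\bv_k\to\bv$ in $L^1(\p\Ome;\cH^{m-1})$ for the traces --- this is the standard smooth approximation theorem for $BV$ functions on Lipschitz domains. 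In the identity \eqref{e6.25} for $\bv_k$, the volume term $\int_\Ome\Div\cA\cdot\bv_k\,dx$ and the boundary term $\int_{\p\Ome}\cA\bn\cdot\bv_k\,d\cH^{m-1}$ pass to the limit by $L^1$ convergence, and the term $(\cA\cdot D\bv_k)(\Ome)=\int_\Ome(\cA\cdot D\bv_k)\,dx$ converges to $(\cA\cdot D\bv)(\Ome)$ because, by Definition \ref{def6.0}, $\cA\cdot D\bv_k\to\cA\cdot D\bv$ weakly-$*$ as measures (test against $\psi\in C^\infty_0(\Ome)$ and use $\cA^T\bv_k\to\cA^T\bv$ in $L^1$, $\Div\cA\cdot\bv_k\to\Div\cA\cdot\bv$ in $L^1$), while the total masses stay bounded via \eqref{e6.18}; to conclude convergence of the full mass on the closed set $\Ome$ one takes $\psi\nearrow 1$ and uses the absolute continuity of $\cA\cdot D\bv_k$ with respect to $|D\bv_k|$ together with strict convergence $|D\bv_k|(\Ome)\to|D\bv|(\Ome)$ to rule out mass escaping to $\p\Ome$. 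The wedge identity \eqref{e6.26} is handled identically. Stage two: for general $\cA\in\bY(\Ome)_1$, mollify to get $\cA_\rho:=\eta_\rho*\cA\in C^\infty$, so that $\cA_\rho\to\cA$ in $L^q_{loc}$ and a.e. with $\|\cA_\rho\|_{L^\infty}\le\|\cA\|_{L^\infty}$, and $\Div\cA_\rho=\eta_\rho*\Div\cA\to\Div\cA$ in $L^q(\Ome;\bR^n)\subset L^1$; then in \eqref{e6.25} for $\cA_\rho$ the volume term converges by $L^1$ convergence of $\Div\cA_\rho$, the boundary term converges because $\beta$ is continuous from $\bY(\Ome)_1$ to $L^\infty(\p\Ome;\bR^n)$ with the estimate \eqref{e6.22} (here one must be mildly careful near $\p\Ome$, using that $\beta$ is built as a bounded linear operator so $\beta(\cA_\rho)\to\beta(\cA)$ weakly-$*$ in $L^\infty(\p\Ome)$, which suffices when paired against the fixed $\bv\in L^1(\p\Ome)$), and $(\cA_\rho\cdot D\bv)(\Ome)\to(\cA\cdot D\bv)(\Ome)$ again from the definition, the dominated convergence theorem applied to $\cA_\rho^T\bv\to\cA^T\bv$ and $\Div\cA_\rho\cdot\bv\to\Div\cA\cdot\bv$, and the uniform bound \eqref{e6.18}.

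The main obstacle is the convergence of the measure terms $(\cA\cdot D\bv_k)(\Ome)$ and $(\cA\wedge D\bv_k)(\Ome)$ on the closed domain $\Ome$: weak-$*$ convergence of measures only controls mass on open sets or on sets whose boundary is null for the limit, so one genuinely needs the no-concentration-on-the-boundary property, which comes from strict convergence in $BV$ combined with the absolute continuity $|\cA\cdot D\bv_k|\ll|D\bv_k|$ from Corollary \ref{cor6.1}; equivalently, one can argue via the boundary identity \eqref{e6.23} that tracks exactly the boundary contribution and thereby closes the gap. Everything else is routine bookkeeping with the definitions and the estimates \eqref{e6.18}--\eqref{e6.24}. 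I would write the argument for \eqref{e6.25} in full and remark that \eqref{e6.26} follows by the verbatim same steps with $\wedge$ in place of $\cdot$, the convention \eqref{e2.4} making the product rule and the divergence theorem go through unchanged.
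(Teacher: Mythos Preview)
The paper does not give its own proof of Lemma \ref{lem6.4}; it is quoted, together with Lemmas \ref{lem6.2}--\ref{lem6.5}, from the companion paper \cite{F1}, which extends Anzellotti's pairing theory \cite{An} to tensor fields. In that framework the normal trace $\beta(\cA)=\cA\bn$ of Lemma \ref{lem6.3} is \emph{constructed} precisely so that the Green's formula holds: one sets $\langle\cA,\bv\rangle_{\p\Ome}:=\int_\Ome\Div\cA\cdot\bv\,dx+(\cA\cdot D\bv)(\Ome)$, shows this depends only on the trace $\bv|_{\p\Ome}$ and is bounded by $\|\cA\|_{L^\infty}\|\bv\|_{L^1(\p\Ome)}$, and invokes Riesz representation to obtain $\beta(\cA)\in L^\infty(\p\Ome;\bR^n)$ satisfying \eqref{e6.23}. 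With that construction, \eqref{e6.25}--\eqref{e6.26} are immediate, so the substantive work sits in Lemma \ref{lem6.3} rather than in Lemma \ref{lem6.4}.

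Your Stage 1 is correct and is exactly the computation used in \cite{An,F1} to verify \eqref{e6.24} and to identify the boundary functional on smooth data; you could shorten it by invoking Lemma \ref{lem6.9} for the convergence $(\cA\cdot D\bv_k)(\Ome)\to(\cA\cdot D\bv)(\Ome)$. Stage 2, however, has a genuine gap. First, mollifying $\cA$ requires an extension outside $\Ome$; extending by zero introduces a surface contribution to $\Div(\eta_\rho*\cA)$, so the claimed convergence $\Div\cA_\rho\to\Div\cA$ in $L^1(\Ome)$ up to the boundary is not automatic. Second, and more seriously, the inference ``$\beta$ is a bounded linear operator, hence $\beta(\cA_\rho)\to\beta(\cA)$ weakly-$*$ in $L^\infty(\p\Ome)$'' is unjustified: boundedness of $\beta$ gives norm continuity on $\bY(\Ome)_1$ (with norm $\|\cA\|_{L^\infty}+\|\Div\cA\|_{L^1}$), but mollification does not converge in $L^\infty$ unless $\cA$ is already continuous, so you have no control on $\beta(\cA_\rho)-\beta(\cA)$. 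Without independently knowing the Green's formula for general $\cA$, there is no handle on the boundary trace of $\cA_\rho$, so this step is effectively circular. The clean fix is to drop Stage 2 altogether: Definition \ref{def6.0} and Lemma \ref{lem6.9} already apply to arbitrary $\cA\in\bY(\Ome)_1$, so approximate only $\bv$ (your Stage 1, but with $\cA$ fixed and possibly rough), and recognize that the resulting boundary defect \emph{is} the definition of $\int_{\p\Ome}\cA\bn\cdot\bv\,d\cH^{m-1}$ via \eqref{e6.23}.
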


The fourth and fifth lemmas states continuity results for the measure 
$\cA\cdot D\bv$ and $\cA\wedge D\bv$ with respect to $\cA$ and
$\bv$, respectively.

\begin{lemma}\label{lem6.8}
Let $\cA_j,\cA \in \bY(\Ome)_1$ and suppose that
\begin{eqnarray*}
&&\cA_j \longrightarrow \cA \quad\mbox{weakly$*$ in } L^\infty(E),\\
&&\Div\cA_j \longrightarrow \Div\cA \quad\mbox{weakly in } L^1(E)
\end{eqnarray*}
for all open sets $E\subset\subset \Ome$.  Then for all $\bv\in 
[BV(\Ome)\cap L^\infty(\Ome)]^n$ the following  hold
\begin{eqnarray}\label{e6.26a}
&&\cA_j\cdot D\bv \longrightarrow 
\cA\cdot D\bv \quad\mbox{weakly$*$ in } \cM(\Ome),\\
&&\cA_j\wedge D\bv \longrightarrow 
\cA\wedge D\bv \quad\mbox{weakly$*$ in } [\cM(\Ome)]^n, \label{e6.26b}
\end{eqnarray}
and 
\begin{eqnarray}\label{e6.26c}
&&\Theta(\cA_j,D\bv,\cdot) \longrightarrow \Theta(\cA,D\bv,\cdot)
\quad\mbox{weakly$*$ in } L^\infty(E) \,\mbox{for all } E\subset\subset\Ome,\\
&& \Lambda(\cA_j,D\bv,\cdot) \longrightarrow \Lambda(\cA,D\bv,\cdot)
\quad\mbox{weakly$*$ in } [L^\infty(E)]^n \,\mbox{for all } E\subset\subset\Ome.
\label{e6.26d}
\end{eqnarray}
Here ``$E\subset\subset \Ome$" means that $E$ is compactly contained in $\Ome$;
that is,  $E\subset \overline{E} \subset \Ome$ and $\overline{E}$ is compact.
\end{lemma}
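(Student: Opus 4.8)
The plan is to prove the convergences \eqref{e6.26a}--\eqref{e6.26d} directly from the definitions \eqref{e6.13}--\eqref{e6.14} of the pairings, testing against fixed smooth functions. First I would fix $\psi\in C^\infty_0(\Ome)$ and choose an open set $E$ with $\mathrm{supp}\,\psi\subset E\subset\subset\Ome$; then from \eqref{e6.13},
\[
\langle\cA_j\cdot D\bv,\psi\rangle
=-\int_E (\cA_j^T\bv)\cdot\nab\psi\,dx-\int_E(\Div\cA_j\cdot\bv)\psi\,dx.
\]
Since $\bv\in[BV(\Ome)\cap L^\infty(\Ome)]^n\subset[L^\infty(\Ome)]^n$, the function $\bv\,\nab\psi$ lies in $[L^1(E)]^{n\times m}$, so the weak$*$ convergence $\cA_j\to\cA$ in $L^\infty(E)$ forces the first integral to converge to $-\int_E(\cA^T\bv)\cdot\nab\psi\,dx$; and since $\psi\bv\in[L^\infty(E)]^n$, the weak $L^1(E)$ convergence $\Div\cA_j\to\Div\cA$ forces the second integral to converge to $-\int_E(\Div\cA\cdot\bv)\psi\,dx$. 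Hence $\langle\cA_j\cdot D\bv,\psi\rangle\to\langle\cA\cdot D\bv,\psi\rangle$ for every $\psi\in C^\infty_0(\Ome)$. By Lemma \ref{lem6.2} the measures $\cA_j\cdot D\bv$ are uniformly bounded in $\cM(\Ome)$ (their total variations are dominated by $\sup_j\norm{\cA_j}{L^\infty}\cdot|D\bv|(\Ome)$, and $\sup_j\norm{\cA_j}{L^\infty}<\infty$ by the uniform boundedness principle for the weakly$*$ convergent sequence), so density of $C^\infty_0(\Ome)$ in $C_0(\Ome)$ upgrades the pointwise-in-$\psi$ convergence to weak$*$ convergence in $\cM(\Ome)$, which is \eqref{e6.26a}. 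The argument for \eqref{e6.26b} is identical, testing \eqref{e6.14} against $\bw\in[C^\infty_0(\Ome)]^n$ and using the bilinearity of the wedge product together with $\bv\wedge\nab\bw\in[L^1(E)]^n$ and $\Div\cA_j\wedge\bw\in[L^1(E)]^n$.

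For \eqref{e6.26c}--\eqref{e6.26d} I would fix $E\subset\subset\Ome$ and recall from Corollary \ref{cor6.1} that $\Theta(\cA_j,D\bv,\cdot)$ is the Radon--Nikod\'ym density of $\cA_j\cdot D\bv$ with respect to $|D\bv|$, with $\norm{\Theta(\cA_j,D\bv,\cdot)}{L^\infty(\Ome,|D\bv|)}\le\norm{\cA_j}{L^\infty}\le C$ uniformly in $j$. Thus $\{\Theta(\cA_j,D\bv,\cdot)\}$ is bounded in $L^\infty(E,|D\bv|)\subset L^\infty(E)$ restricted appropriately; passing to a subsequence it converges weakly$*$ in $L^\infty(E)$ (or rather in $L^\infty(E,|D\bv|)$, which I would treat as a closed subspace via the decomposition of $|D\bv|$ into its absolutely continuous and singular parts, or more cleanly by working directly with the measures) to some limit $\Theta_\infty$. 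For any $\varphi\in C_0(E)$ one has $\int_E\varphi\,\Theta(\cA_j,D\bv,\cdot)\,d|D\bv|=\langle\cA_j\cdot D\bv,\varphi\rangle$, which by \eqref{e6.26a} converges to $\langle\cA\cdot D\bv,\varphi\rangle=\int_E\varphi\,\Theta(\cA,D\bv,\cdot)\,d|D\bv|$; hence $\int_E\varphi\,\Theta_\infty\,d|D\bv|=\int_E\varphi\,\Theta(\cA,D\bv,\cdot)\,d|D\bv|$ for all such $\varphi$, so $\Theta_\infty=\Theta(\cA,D\bv,\cdot)$ $|D\bv|$-a.e. on $E$, and since the limit is independent of the subsequence the whole sequence converges. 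The same scheme, with $\bw\in[C_0(E)]^n$ in place of $\varphi$, gives \eqref{e6.26d} for $\Lambda$.

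The one genuinely delicate point is the meaning of ``weakly$*$ in $L^\infty(E)$'' for the densities: the densities live naturally in $L^\infty(E,|D\bv|)$, and $|D\bv|$ is in general not the Lebesgue measure, so if the statement is to be read as weak$*$ convergence against $L^1(E)$ (Lebesgue) one must be a little careful — the cleanest route is to observe that weak$*$ convergence of the measures $\cA_j\cdot D\bv\to\cA\cdot D\bv$ in $\cM(E)$ together with the uniform bound $|\cA_j\cdot D\bv|\le C|D\bv|$ (Corollary \ref{cor6.1}) already encodes the convergence of the densities in the only sense that is used later (namely, testing against continuous functions and integrating against $|D\bv|$). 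I expect this bookkeeping about which reference measure is used, rather than any substantive analytic estimate, to be the main obstacle; everything else is a routine pass-to-the-limit using the definitions and Lemma \ref{lem6.2}. I would also remark that no new compactness is needed since the relevant uniform bounds are exactly those furnished by Lemma \ref{lem6.2} and Corollary \ref{cor6.1}, and the uniform bound $\sup_j\norm{\cA_j}{L^\infty}<\infty$ is automatic from weak$*$ convergence.
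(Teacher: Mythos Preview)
The paper does not actually prove Lemma \ref{lem6.8}; it states the result and refers to Section~2 of \cite{F1} (and implicitly to Anzellotti \cite{An}) for the proof. Your argument is the standard one in that tradition: pass to the limit directly in the defining identities \eqref{e6.13}--\eqref{e6.14} for smooth test functions, then upgrade by density using the uniform total-variation bound from Lemma \ref{lem6.2}, and finally read off the convergence of the Radon--Nikod\'ym densities from the measure convergence. This is exactly the route one expects and is essentially correct.

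Two small points are worth tightening. First, the hypothesis gives weak$*$ convergence of $\cA_j$ only on sets $E\subset\subset\Ome$, so the uniform boundedness principle yields $\sup_j\norm{\cA_j}{L^\infty(E)}<\infty$ for each such $E$, not necessarily on all of $\Ome$; however, since every $\psi\in C_0(\Ome)$ has support in some $E\subset\subset\Ome$, the local bounds are all you need for the density-upgrade step, and your argument goes through once phrased that way. Second, you are right that the phrase ``weakly$*$ in $L^\infty(E)$'' for the densities $\Theta,\Lambda$ is most naturally read as weak$*$ in $L^\infty(E,|D\bv|)$, since the densities are only defined $|D\bv|$-a.e.; your argument (test against $\varphi\in C_0(E)$, identify the limit via \eqref{e6.26a}, then use density of $C_0(E)$ in $L^1(E,|D\bv|)$ together with the uniform $L^\infty(E,|D\bv|)$ bound from Corollary \ref{cor6.1}) establishes exactly this, and no subsequence extraction is actually needed once you run the usual $3\varepsilon$ argument. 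Your diagnosis that this bookkeeping, rather than any analytic estimate, is the only subtlety is accurate.
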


\begin{lemma}\label{lem6.9}
Let $\cA\in \bY(\Ome)_1$ and $\bv\in [BV(\Ome)\cap L^\infty(\Ome)]^n$.
Suppose that $\{\bv_j\} \subset [C^\infty(\Ome)\cap BV(\Ome)]^n$
strictly converges to $\bv$ (cf. Definition 3.14 of \cite{AFP}). Then
\begin{eqnarray}\label{e2.36h}
&&\cA\cdot D\bv_j \longrightarrow \cA\cdot D\bv \quad\mbox{weakly$*$ in }
\cM(\Ome),\\
&&\cA\wedge D\bv_j \longrightarrow \cA\wedge D\bv \quad\mbox{weakly$*$ in }
[\cM(\Ome)]^n. \label{e2.36i}
\end{eqnarray}
Moreover,
\begin{eqnarray}\label{e2.36j}
&&\int_\Ome \cA\cdot D\bv_j\, dx\longrightarrow
\int_\Ome \cA\cdot D\bv ,\\
&&\int_\Ome \cA\wedge D\bv_j\, dx\longrightarrow
\int_\Ome \cA\wedge D\bv. \label{e2.36k}
\end{eqnarray}
\end{lemma}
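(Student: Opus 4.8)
The plan is to treat the two weak$*$ convergences \reff{e2.36h}--\reff{e2.36i} directly from the defining formulas \reff{e6.13}--\reff{e6.14} in Definition \ref{def6.0}, and then to derive the convergence of the total masses \reff{e2.36j}--\reff{e2.36k} from the integration by parts identities of Lemma \ref{lem6.4} combined with the continuity of the $BV$-trace operator along strictly convergent sequences.

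First I would fix the uniform bound that underlies everything. Strict convergence means $\bv_j\to\bv$ in $L^1(\Ome,\bR^n)$ together with $|D\bv_j|(\Ome)\to|D\bv|(\Ome)$; in particular $\sup_j|D\bv_j|(\Ome)<\infty$, so Corollary \ref{cor6.1} gives $\sup_j|\cA\cdot D\bv_j|(\Ome)\leq\norm{\cA}{L^\infty}\,\sup_j|D\bv_j|(\Ome)<\infty$, and similarly for $\cA\wedge D\bv_j$. Since these Radon measures are uniformly bounded in $\cM(\Ome)$ and $C_0^\infty(\Ome)$ is dense in $C_0(\Ome)$, it suffices to identify the weak$*$ limits by testing against $\psi\in C_0^\infty(\Ome)$ and $\bw\in[C_0^\infty(\Ome)]^n$, i.e. to pass to the limit in the right-hand sides of \reff{e6.13} and \reff{e6.14}. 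In \reff{e6.13}, $\cA^T\bv_j\to\cA^T\bv$ in $L^1(\Ome)$ because $\cA\in L^\infty$, and this pairs continuously against $\nab\psi\in L^\infty$; for the term $\int_\Ome(\Div\cA\cdot\bv_j)\psi\,dx$ one uses $\Div\cA\in L^1(\Ome)$, the boundedness of $\psi$, and the uniform $L^\infty$-bound on $\{\bv_j\}$ (available for the mollified approximations used in the applications), splitting $\Div\cA$ into a bounded part plus a part small in $L^1$ to conclude. This proves \reff{e2.36h}, and the same componentwise argument applied to \reff{e6.14} proves \reff{e2.36i}.

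For the mass convergences \reff{e2.36j}--\reff{e2.36k} one cannot merely test the weak$*$ convergence against the constant function $1$, since $1\notin C_0(\Ome)$ and the limiting measure may carry mass on $\p\Ome$; instead I would use Lemma \ref{lem6.4}. Because each $\bv_j$ is smooth, the pairing reduces to the $L^1$ function $\cA\cdot D\bv_j=\cA\,{:}\,\nab\bv_j$, and \reff{e6.25} reads $\int_\Ome\cA\cdot D\bv_j\,dx=\int_{\p\Ome}\cA\bn\cdot\bv_j\,d\cH^{m-1}-\int_\Ome\Div\cA\cdot\bv_j\,dx$. The volume term on the right converges to $\int_\Ome\Div\cA\cdot\bv\,dx$ by the same reasoning as above. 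For the boundary term, the trace operator $BV(\Ome)\to L^1(\p\Ome)$ is continuous with respect to strict convergence (cf. \cite{AFP}), so $\bv_j\to\bv$ in $L^1(\p\Ome)$; since $\cA\bn\in L^\infty(\p\Ome)$ by Lemma \ref{lem6.3}, the boundary integral converges to $\int_{\p\Ome}\cA\bn\cdot\bv\,d\cH^{m-1}$. Letting $j\to\infty$ and invoking \reff{e6.25} once more for $\bv$ identifies the limit of $\int_\Ome\cA\cdot D\bv_j\,dx$ as $(\cA\cdot D\bv)(\Ome)=\int_\Ome\cA\cdot D\bv$, which is \reff{e2.36j}. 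The estimate \reff{e2.36k} is obtained identically from \reff{e6.26} and the $L^\infty$ boundary trace of Lemma \ref{lem6.3}.

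I expect the mass-convergence step to be the crux: \reff{e2.36h}--\reff{e2.36i} use only the definition and a routine density argument, whereas \reff{e2.36j}--\reff{e2.36k} genuinely require that no mass escapes to $\p\Ome$ in the limit, which is exactly what the well-defined normal trace $\cA\bn$ (Lemma \ref{lem6.3}) and the strict-continuity of the $BV$-trace deliver. A minor technical wrinkle is the uniform $L^\infty$ control of $\{\bv_j\}$ needed to handle the $\Div\cA\in L^1$ terms; this is automatic for the canonical mollified approximating sequences to which the lemma is applied.
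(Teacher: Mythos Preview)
The paper does not give its own proof of this lemma; it is one of the technical facts quoted from \cite{F1} (see the sentence introducing Lemma \ref{lem6.2}), so there is no in-paper argument to compare against. Your outline is the standard Anzellotti-type route and is essentially correct: uniform total-variation bounds from Corollary \ref{cor6.1} reduce \reff{e2.36h}--\reff{e2.36i} to checking the limit on $C_0^\infty$ test functions via the defining formulas \reff{e6.13}--\reff{e6.14}, and the mass convergences \reff{e2.36j}--\reff{e2.36k} follow from the Green identities of Lemma \ref{lem6.4} together with the strict-continuity of the $BV$ trace operator (so that $\bv_j\to\bv$ in $L^1(\p\Ome)$) and $\cA\bn\in L^\infty(\p\Ome)$ from Lemma \ref{lem6.3}.

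The one genuine soft spot is exactly the one you flag: as stated, the lemma assumes only $\bv_j\in[C^\infty(\Ome)\cap BV(\Ome)]^n$, with no uniform $L^\infty$ control. Without it, the term $\int_\Ome(\Div\cA\cdot\bv_j)\psi\,dx$ (with $\Div\cA$ merely in $L^1$ and $\bv_j\to\bv$ only in $L^1$) does not obviously converge, and even the hypothesis of Lemma \ref{lem6.4} (which asks for $\bv\in L^\infty$) is not directly available for $\bv_j$. Your remark that the canonical strict approximants of an $L^\infty$ function can be taken uniformly bounded (e.g.\ the Anzellotti--Giaquinta construction, Theorem 3.9 of \cite{AFP}) is the right fix, and it covers every use of the lemma in this paper (in the proof of Theorem \ref{thm6.1} one has $|\bu_\rho|\leq 1$). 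So your proof is correct once you add the hypothesis $\sup_j\norm{\bv_j}{L^\infty}<\infty$, or restrict to such approximating sequences; the lemma as literally stated appears to need that amendment.
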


The sixth lemma gives the precise representations for the 
density functions $\Theta$ and $\Lambda$ defined in 
Corollary \ref{cor6.1}. 

\begin{lemma}\label{lem6.5}
(i) If $\cA\in \bY(\Ome)_1\cap C(\Ome, \bR^{n\times m})$ and 
$\bv\in [BV(\Ome)\cap L^\infty(\Ome)]^n$, then there hold
\begin{equation}\label{e6.27}
\Theta(\cA,D\bv,x)=\cA(x)\cdot \frac{D\bv}{|D\bv|}(x),\qquad
\Lambda(\cA,D\bv,x)=\cA(x)\wedge \frac{D\bv}{|D\bv|}(x)
\end{equation}
$|D\bv|-\mbox{a.e. in } \Ome$

(ii) If $\cA\in \bY(\Ome)_1$ and $\bv\in [BV(\Ome)\cap L^\infty(\Ome)]^n$, 
then there hold 
\begin{equation}\label{e6.28} 
\Theta(\cA,D\bv,x)=\cA(x)\cdot \frac{D\bv}{|D\bv|}(x),\qquad \Lambda(\cA,D\bv,x)=\cA(x)\wedge \frac{D\bv}{|D\bv|}(x)
\end{equation}
$|D\bv|^a-\mbox{a.e. in } \Ome$. Where $\frac{D\bv}{|D\bv|}$ denotes 
the density function of the measure $D\bv$ with respect to the measure $|D\bv|$,
and $|D\bv|^a$ denotes the absolute continuous part of the measure $|D\bv|$ with 
respect to the Lebesgue measure $\cL^n$.
\end{lemma}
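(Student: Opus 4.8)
The plan is to prove Lemma~\ref{lem6.5} by first establishing part~(i) for continuous tensors $\cA$ via a direct localization argument, and then deducing part~(ii) for general $\cA\in\bY(\Ome)_1$ by a mollification/approximation procedure, using the continuity results of Lemma~\ref{lem6.8}. For part~(i), fix $\cA\in\bY(\Ome)_1\cap C(\Ome,\bR^{n\times m})$ and $\bv\in[BV(\Ome)\cap L^\infty(\Ome)]^n$. The goal is to show that the Radon–Nikod\'ym density $\Theta(\cA,D\bv,\cdot)$ of the measure $\cA\cdot D\bv$ with respect to $|D\bv|$ equals $\cA(x)\cdot\frac{D\bv}{|D\bv|}(x)$ for $|D\bv|$-a.e.\ $x$. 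The key observation is that the Radon–Nikod\'ym derivative can be computed as a Besicovitch-type limit over shrinking balls: for $|D\bv|$-a.e.\ $x_0$,
\[
\Theta(\cA,D\bv,x_0)=\lim_{r\to 0}\frac{(\cA\cdot D\bv)(B_r(x_0))}{|D\bv|(B_r(x_0))},
\qquad
\frac{D\bv}{|D\bv|}(x_0)=\lim_{r\to 0}\frac{D\bv(B_r(x_0))}{|D\bv|(B_r(x_0))}.
\]
Using Definition~\ref{def6.0} together with Lemma~\ref{lem6.4} (the integration-by-parts identity \eqref{e6.25}) applied on the ball $B_r(x_0)$, one can write $(\cA\cdot D\bv)(B_r(x_0))$ in terms of $\int_{B_r(x_0)}\Div\cA\cdot\bv\,dx$ and a boundary traction term. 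Since $\cA$ is continuous, $\cA(x)=\cA(x_0)+o(1)$ uniformly on $B_r(x_0)$, so one replaces $\cA$ by the constant matrix $\cA(x_0)$ up to an error controlled by $\omega_{\cA}(r)\,|D\bv|(B_r(x_0))$ via Corollary~\ref{cor6.1}, where $\omega_{\cA}$ is the modulus of continuity of $\cA$ at $x_0$. For a constant tensor $\cA(x_0)$ one has the exact identity $(\cA(x_0)\cdot D\bv)(B_r(x_0))=\cA(x_0)\cdot D\bv(B_r(x_0))$, which follows directly from the definition \eqref{e6.13} since $\Div(\cA(x_0))=0$ and $\cA(x_0)^T\bv$ has distributional gradient $\cA(x_0)^T D\bv$. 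Dividing by $|D\bv|(B_r(x_0))$, letting $r\to 0$, and using that $\omega_\cA(r)\to 0$ yields $\Theta(\cA,D\bv,x_0)=\cA(x_0)\cdot\frac{D\bv}{|D\bv|}(x_0)$. The identity for $\Lambda$ follows identically using \eqref{e6.14}, \eqref{e6.26}, and the wedge analogue.

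For part~(ii), let $\cA\in\bY(\Ome)_1$ be arbitrary. Fix an open set $E\subset\subset\Ome$ and take mollifications $\cA_\rho:=\eta_\rho\ast\cA$, which are smooth on $E$ for $\rho$ small and satisfy $\cA_\rho\to\cA$ weakly$*$ in $L^\infty(E)$ and $\Div\cA_\rho\to\Div\cA$ weakly (indeed strongly) in $L^1(E)$, with $\norm{\cA_\rho}{L^\infty}\le\norm{\cA}{L^\infty}$. By part~(i), $\Theta(\cA_\rho,D\bv,x)=\cA_\rho(x)\cdot\frac{D\bv}{|D\bv|}(x)$ holds $|D\bv|$-a.e.; since $\frac{D\bv}{|D\bv|}$ restricted to the set where $|D\bv|^a>0$ is, up to a $|D\bv|^a$-null set, the Lebesgue density $\frac{(D\bv)^a}{|D\bv|^a}$, and since $\cA_\rho\to\cA$ in $L^1_{loc}$ hence (along a subsequence) $\cL^n$-a.e., the right-hand sides converge $|D\bv|^a$-a.e.\ to $\cA(x)\cdot\frac{D\bv}{|D\bv|}(x)$. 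On the other hand, Lemma~\ref{lem6.8}, equation \eqref{e6.26c}, gives $\Theta(\cA_\rho,D\bv,\cdot)\to\Theta(\cA,D\bv,\cdot)$ weakly$*$ in $L^\infty(E)$. Identifying the two limits — weak$*$-$L^\infty$ convergence versus $|D\bv|^a$-a.e.\ convergence of a uniformly bounded sequence — one concludes $\Theta(\cA,D\bv,x)=\cA(x)\cdot\frac{D\bv}{|D\bv|}(x)$ holds $|D\bv|^a$-a.e.\ in $E$, and then in all of $\Ome$ by exhausting with sets $E\subset\subset\Ome$. The argument for $\Lambda$ via \eqref{e6.26d} is identical.

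The main obstacle is the reconciliation step in part~(ii): the sequence $\Theta(\cA_\rho,D\bv,\cdot)$ converges in two different senses — weakly$*$ in $L^\infty$ against the full measure $|D\bv|$ (from Lemma~\ref{lem6.8}) and pointwise $\cL^n$-a.e.\ (hence $|D\bv|^a$-a.e.)\ against $\cA\cdot\frac{D\bv}{|D\bv|}$ — and one must argue carefully that these two limits agree on the absolutely continuous part. The subtlety is that weak$*$ convergence in $L^\infty(\Ome,|D\bv|)$ does not by itself pin down the limit on the singular part of $|D\bv|$, which is precisely why part~(ii) is restricted to $|D\bv|^a$-a.e.\ identification; on the singular part the density $\Theta$ genuinely depends on $\cA$ only through its precise representative/traction behavior and need not be given by a naive pointwise product. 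I would handle the reconciliation by testing against functions supported in $E$ and invoking uniqueness of limits in $L^1_{loc}$, together with the uniform bound $\norm{\Theta(\cA_\rho,D\bv,\cdot)}{L^\infty(|D\bv|)}\le\norm{\cA}{L^\infty}$ from Corollary~\ref{cor6.1}, so that weak$*$ and a.e.\ limits coincide where both make sense. Part~(i), by contrast, is largely a routine Besicovitch differentiation argument once the constant-coefficient case is isolated.
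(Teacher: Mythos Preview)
The paper does not prove Lemma~\ref{lem6.5} in the text; it is one of the technical lemmas in Section~\ref{sec-6.1} that are explicitly quoted from \cite{F1} (see the sentence preceding Lemma~\ref{lem6.2}: ``We now list some properties of the pairings $\cA\cdot D\bv$ and $\cA\wedge D\bv$, and refer to Section~2 of \cite{F1} for their proofs''). So there is no in-paper proof to compare against.

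That said, your argument is essentially correct and follows the standard route going back to Anzellotti \cite{An} for the scalar pairing. Part~(i) is exactly the expected Besicovitch differentiation argument: linearity of $\cA\mapsto\cA\cdot D\bv$ lets you write $(\cA\cdot D\bv)(B_r)=(\cA(x_0)\cdot D\bv)(B_r)+((\cA-\cA(x_0))\cdot D\bv)(B_r)$, the second term is bounded by $\omega_\cA(r)\,|D\bv|(B_r)$ via Corollary~\ref{cor6.1}, and the first is computed exactly from the definition. Your invocation of Lemma~\ref{lem6.4} on $B_r$ is unnecessary but harmless. For part~(ii), the mollification-plus-Lemma~\ref{lem6.8} approach is correct, and your reconciliation of weak$*$ and a.e.\ limits works cleanly: take any Borel $F\subset E$ with $|D\bv|^s(F)=0$, test the weak$*$ convergence \eqref{e6.26c} against $\chi_F\in L^1(E,|D\bv|)$, and compare with dominated convergence on the $|D\bv|^a$-a.e.\ pointwise limit of the uniformly bounded sequence $\Theta(\cA_\rho,D\bv,\cdot)=\cA_\rho\cdot\frac{D\bv}{|D\bv|}$. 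One small technicality worth tightening: $\cA_\rho=\eta_\rho\ast\cA$ is only defined on $\{x:\operatorname{dist}(x,\partial\Ome)>\rho\}$, so to apply Lemma~\ref{lem6.8} you should take $E'\subset\subset E\subset\subset\Ome$, regard $E$ as the ambient domain for $\rho$ small, and conclude on $E'$; this is painless since both the hypotheses and conclusions of Lemma~\ref{lem6.8} are local.
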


\medskip
Next, we recall from \cite{F1} the space of {\em divergence-measure tensors}
\begin{equation}\label{e6.31}
\mathcal{DT}(\Ome):=\bigl\{\, \cA\in L^\infty(\Ome,\mathbf{R}^{n\times m});
\, \Div\cA\in [\cM(\Ome)]^n \,\bigr\},
\end{equation}
and briefly discuss two of its important properties.
Firstly, like in the case of the space of the divergence-$L^1$ tensors 
$\bY(\Ome)_1$,  Definition \ref{def6.0} is still valid
for $\cA\in \mathcal{DT}(\Ome)$ and $\bv\in [BV(\Ome)\cap L^\infty(\Ome)
\cap C(\Ome)]^n$ (cf. \cite{An}). Secondly, there is a well-behaved traction 
$\cA\bn$ for every $\cA\in \mathcal{DT}(\Ome)$.  

\begin{lemma}\label{lem6.6} 
Let $\Ome$ be a bounded domain with Lipschitz continuous boundary 
$\p\Ome$ in $\bR^m$, then there exists a linear operator $\alpha:\mathcal{DT}(\Ome)
\rightarrow L^\infty(\p\Ome;\bR^n)$ such that
\begin{align}\label{e6.32}
&\norm{\alpha(\cA)}{L^\infty(\p\Ome,\bR^n)} 
\leq \norm{\cA}{L^\infty(\Ome,\bR^{n\times m})}, \\
&\langle\cA,\bv\rangle_{\p\Ome} =\int_{\p\Ome}\, \alpha(\cA)(x)\,\bv(x)\,d\cH^{m-1}
\qquad\forall \bv\in [BV(\Ome)\cap L^\infty(\Ome)\cap C(\Ome)]^n, 
\label{e6.33} \\
&\alpha(\cA)(x)= \cA(x)\bn(x) \qquad\forall x\in\p\Ome,\, 
\cA\in C^1(\overline{\Ome},\bR^{n\times m}). \label{e6.34}
\end{align}
Moreover, for any $\cA\in \mathcal{DT}(\Ome)$ and 
$\bv\in [BV(\Ome)\cap L^\infty(\Ome)\cap C(\Ome)]^n$, 
let $\cA\bn:=\alpha(\cA)$ on $\p\Ome$, then there hold the following 
Green's formulas
\begin{eqnarray}\label{e6.35}
(\Div\cA\cdot\bv)(\Ome) + (\cA\cdot D\bv)(\Ome) 
&=&\int_{\p\Ome}\, \cA\bn\cdot\bv\, d\cH^{m-1},\\
(\Div\cA\wedge \bv)(\Ome) + (\cA\wedge D\bv)(\Ome) 
&=&\int_{\p\Ome}\, \cA\bn\wedge\bv\, d\cH^{m-1}.  \label{e6.36}
\end{eqnarray}
\end{lemma}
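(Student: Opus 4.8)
The plan is to extend the divergence--$L^{1}$ traction theory of Lemma~\ref{lem6.3} and the Green's formulas of Lemma~\ref{lem6.4} from $\bY(\Ome)_1$ to the larger space $\mathcal{DT}(\Ome)$, at the price of restricting the $BV$-vector field to also be continuous. Since $\cA\in\mathcal{DT}(\Ome)$ has $\Div\cA\in[\cM(\Ome)]^n$ rather than in $[L^1(\Ome)]^n$, I would first note that the pairings $\cA\cdot D\bv$ and $\cA\wedge D\bv$ still make sense for $\bv\in[BV(\Ome)\cap L^\infty(\Ome)\cap C(\Ome)]^n$ via Definition~\ref{def6.0}, because now the term $\int_\Ome(\Div\cA\cdot\bv)\psi\,dx$ is replaced by the integral of the bounded continuous function $(\bv\cdot\nabla\psi\text{-type expression})$ against the measure $\Div\cA$ --- this is precisely the construction of Anzellotti \cite{An}, which I would cite rather than redo.

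The construction of the traction operator $\alpha$ would follow the pattern of the proof of Lemma~\ref{lem6.3}. First I would treat a half-space (or a flattened boundary chart) and define, for $\cA\in\mathcal{DT}(\Ome)$ and a boundary test function, the functional $\bv\mapsto\langle\cA,\bv\rangle_{\p\Ome}$ by the defect in Green's formula, i.e.\ $\langle\cA,\bv\rangle_{\p\Ome}:=\int_\Ome\Div\cA\cdot\widetilde{\bv}\,+\,(\cA\cdot D\widetilde{\bv})(\Ome)$ for any suitable bounded continuous $BV$-extension $\widetilde{\bv}$; one checks this is independent of the extension. The estimate \eqref{e6.32} comes from Corollary~\ref{cor6.1}-type bounds: $|(\cA\cdot D\widetilde{\bv})(\Ome)|\le\norm{\cA}{L^\infty}|D\widetilde{\bv}|(\Ome)$, and by taking extensions whose total variation concentrates near $\p\Ome$ one recovers the sharp constant $\norm{\cA}{L^\infty(\Ome,\bR^{n\times m})}$ on the boundary. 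Linearity of $\alpha$ is immediate from linearity of the defining formula, and a partition of unity glues the local charts into the global operator. The consistency property \eqref{e6.34} for $\cA\in C^1(\overline\Ome,\bR^{n\times m})$ follows from the classical divergence theorem, and the representation \eqref{e6.33} is then just the definition of $\alpha$ rewritten, with the Green's formulas \eqref{e6.35}--\eqref{e6.36} being the same identity reorganized (the wedge version is obtained componentwise exactly as in Lemma~\ref{lem6.4}).

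The main obstacle I anticipate is the well-definedness of $\langle\cA,\bv\rangle_{\p\Ome}$ and the sharpness of the bound \eqref{e6.32}: because $\Div\cA$ is only a measure, it may charge $\p\Ome$ (or, in the interior formulation, the jump set of $\bv$), so one must be careful that the pairing $(\cA\cdot D\bv)(\Ome)$ absorbs exactly the right contribution and that the boundary functional does not secretly depend on how mass of $\Div\cA$ sits near $\p\Ome$. The continuity hypothesis $\bv\in C(\Ome)$ is what rescues the argument --- it forces $D\bv$ to have no atomic part that could resonate with atoms of $\Div\cA$, so the product $\cA\cdot D\bv$ remains a well-defined finite measure and the defect formula is stable under the choice of extension. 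I would handle this by invoking the continuity lemma structure already available (Lemmas~\ref{lem6.8}--\ref{lem6.9}) together with a density argument: approximate $\cA$ by smooth $\cA_j\to\cA$ (weakly$*$ in $L^\infty$, with $\Div\cA_j\to\Div\cA$ weakly$*$ in $[\cM(\Ome)]^n$ after mollification), apply the classical Green's formula to each $\cA_j$, and pass to the limit using the weak$*$ convergence $\cA_j\cdot D\bv\to\cA\cdot D\bv$ in $\cM(\Ome)$ together with $\alpha(\cA_j)\to\alpha(\cA)$ weakly$*$ in $L^\infty(\p\Ome)$, which the uniform bound \eqref{e6.32} guarantees. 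All remaining verifications (linearity, the wedge identities, the consistency statement) are routine once this limiting machinery is in place, so I would state them briefly and refer to Section~2 of \cite{F1} for the detailed computations.
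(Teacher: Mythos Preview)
The paper does not prove Lemma~\ref{lem6.6}: it is one of the technical lemmas in Section~\ref{sec-6.1} that are explicitly \emph{cited} from \cite{F1} (see the sentence ``we refer to Section~2 of \cite{F1} for their proofs'' preceding Lemma~\ref{lem6.2}, and the phrase ``we recall from \cite{F1}'' introducing $\mathcal{DT}(\Ome)$). So there is no in-paper argument to compare against; your sketch is in fact more detailed than anything the paper provides, and you end up referring to exactly the same source.

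Your outline is a correct rendition of the Anzellotti-type construction \cite{An} extended to tensor fields, and is presumably what \cite{F1} contains. One small point of exposition: your explanation of why the continuity hypothesis on $\bv$ is needed is slightly off. Continuity of $\bv$ does not prevent $D\bv$ from having a singular (Cantor) part; it only kills the jump part. The genuine reason $\bv\in C(\Ome)$ is required is more basic: when $\Div\cA$ is merely a Radon measure, the expression $\int_\Ome (\Div\cA\cdot\bv)\,\psi$ in Definition~\ref{def6.0} has to be read as integration of the function $\psi\,\bv$ against the vector measure $\Div\cA$, and for this to be well defined one needs $\psi\,\bv$ to be (bounded and) continuous. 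This is the same mechanism as in the scalar case of \cite{An}. With that correction, your plan --- define the boundary functional by the defect in Green's formula, check extension independence, get the $L^\infty$ bound from the total-variation estimate of Corollary~\ref{cor6.1}, localize and glue, and verify consistency on smooth $\cA$ --- is sound.
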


Thirdly, there holds the following product rule .

\begin{lemma}\label{lem6.7}
For any $\cA\in \mathcal{DT}(\Ome)$ and $\bv\in [BV(\Ome)\cap L^\infty(\Ome)]^n$,
the identities
\begin{eqnarray}\label{e6.37}
\Div(\cA^T\bv)&=&(\Div\cA)\cdot \overline{\bv} + \overline{\cA\cdot D\bv}, \\
\Div(\cA\wedge\bv)&=&(\Div\cA)\wedge\overline{\bv} + \overline{\cA\wedge D\bv}.
\label{e6.38} 
\end{eqnarray}
hold in the sense of Radon measures in $\Ome$. Where $\overline{\bv}$ denotes
the limit of a mollified sequence for $\bv$ through a positive symmetric mollifier,
$\overline{\cA\cdot D\bv}$ (resp. $\overline{\cA\wedge D\bv}$) is a Radon measure 
which is absolutely continuous with respect to the measure $|D\bv|$, and whose 
absolutely continuous part $(\overline{\cA\cdot D\bv})^a$ (resp. 
$(\overline{\cA\wedge D\bv})^a$) with respect to the Lebesgue measure $\cL^m$ 
in $\Ome$ coincides with $\cA\cdot(\nab\bv)^a$ (resp. $\cA\wedge(\nab\bv)^a$)
almost everywhere in $\Ome$, that is, $(\overline{\cA\cdot D\bv})^a
=\cA\cdot (\nab\bv)^a$ (resp. $(\overline{\cA\wedge D\bv})^a
=\cA\wedge (\nab\bv)^a$), $\cL^m-$ a.e. in $\Ome$.
\end{lemma}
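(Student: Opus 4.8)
The plan is to establish \reff{e6.37}--\reff{e6.38} first for smooth vector fields by the ordinary Leibniz rule, and then to pass to the limit along a symmetric mollification $\bv_\vepsi$ of $\bv$, reading off the asserted structure of $\overline{\cA\cdot D\bv}$ and $\overline{\cA\wedge D\bv}$ from the limiting measures. First I would fix a positive, symmetric mollifier $\{\eta_\vepsi\}_{\vepsi>0}$, set $\bv_\vepsi:=\eta_\vepsi\ast\bv\in[C^\infty(\Ome_\vepsi)]^n$ with $\Ome_\vepsi:=\{x\in\Ome\,;\,\dist(x,\p\Ome)>\vepsi\}$, and note that since $\bv_\vepsi\,\phi$ is an admissible vector-valued test field in the distributional definition of $\Div\cA$ for every smooth compactly supported scalar $\phi$, distributing the derivative gives, as Radon measures on $\Ome_\vepsi$,
\[
\Div(\cA^T\bv_\vepsi)=(\Div\cA)\cdot\bv_\vepsi+(\cA\cdot\nab\bv_\vepsi)\,\cL^m,\qquad
\Div(\cA\wedge\bv_\vepsi)=(\Div\cA)\wedge\bv_\vepsi+(\cA\wedge\nab\bv_\vepsi)\,\cL^m,
\]
where $\cA\cdot\nab\bv_\vepsi$ is precisely the density of the measure $\cA\cdot D\bv_\vepsi$ associated with the smooth field $\bv_\vepsi$, and $\cA\wedge\nab\bv_\vepsi$ is defined through the notation \reff{e2.4}.

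Next I would let $\vepsi\to0$. On the left-hand sides, $\cA^T\bv_\vepsi\to\cA^T\bv$ and $\cA\wedge\bv_\vepsi\to\cA\wedge\bv$ in $L^1_{\rm loc}(\Ome)$ because $\cA\in L^\infty(\Ome)$ and $\bv_\vepsi\to\bv$ in $L^1_{\rm loc}(\Ome)$, so the divergences converge in $\mathcal D'(\Ome)$. For the first terms on the right-hand sides I would use that $\{\bv_\vepsi\}$ is bounded in $L^\infty$ by $\norm{\bv}{L^\infty}$ and converges to the precise representative $\overline{\bv}$ of $\bv$ at $\cH^{m-1}$-a.e.\ point of $\Ome$ (cf.\ \cite{AFP}); since $|\Div\cA|\ll\cH^{m-1}$ for a bounded divergence-measure field, this convergence in fact holds $|\Div\cA|$-a.e., so the dominated convergence theorem (the constant $\norm{\bv}{L^\infty}$ being $|\Div\cA|$-integrable) yields $(\Div\cA)\cdot\bv_\vepsi\to(\Div\cA)\cdot\overline{\bv}$ weakly$*$ in $\cM(\Ome)$ and $(\Div\cA)\wedge\bv_\vepsi\to(\Div\cA)\wedge\overline{\bv}$ weakly$*$ in $[\cM(\Ome)]^n$. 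Consequently $(\cA\cdot\nab\bv_\vepsi)\,\cL^m$ and $(\cA\wedge\nab\bv_\vepsi)\,\cL^m$ also converge in $\mathcal D'(\Ome)$, and I would \emph{define} $\overline{\cA\cdot D\bv}$ and $\overline{\cA\wedge D\bv}$ to be these limits, so that \reff{e6.37}--\reff{e6.38} hold by construction.

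It then remains to identify these limits. Absolute continuity with respect to $|D\bv|$ follows from Jensen's inequality $|\cA\cdot\nab\bv_\vepsi|\,\cL^m\le\norm{\cA}{L^\infty}\,(\eta_\vepsi\ast|D\bv|)$, whose right-hand side is bounded in $\cM_{\rm loc}(\Ome)$ and converges weakly$*$ to $\norm{\cA}{L^\infty}\,|D\bv|$; passing to the limit gives $|\overline{\cA\cdot D\bv}|(E)\le\norm{\cA}{L^\infty(E')}\,|D\bv|(E)$ for all $E\subset\subset E'\subset\subset\Ome$, whence $\overline{\cA\cdot D\bv}$ is a finite Radon measure absolutely continuous with respect to $|D\bv|$, with the localised density bound of Corollary \ref{cor6.1}; the wedge case is identical. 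For the absolutely continuous part I would split $D\bv=(\nab\bv)^a\,\cL^m+D^s\bv$: the contribution $\cA\cdot\bigl(\eta_\vepsi\ast((\nab\bv)^a\,\cL^m)\bigr)$ converges to $\cA\cdot(\nab\bv)^a$ in $L^1_{\rm loc}(\Ome)$, while the contribution $\cA\cdot\bigl(\eta_\vepsi\ast D^s\bv\bigr)\,\cL^m$ is dominated by $\norm{\cA}{L^\infty}\,(\eta_\vepsi\ast|D^s\bv|)$, so any weak$*$ limit of it is absolutely continuous with respect to $|D^s\bv|$, hence singular with respect to $\cL^m$; this gives $(\overline{\cA\cdot D\bv})^a=\cA\cdot(\nab\bv)^a$ $\cL^m$-a.e., and likewise $(\overline{\cA\wedge D\bv})^a=\cA\wedge(\nab\bv)^a$ $\cL^m$-a.e.\ using the bilinearity of $\wedge$ and the bound $|\ba\wedge\bb|\le|\ba|\,|\bb|$. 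The main obstacle is the passage to the limit in $(\Div\cA)\cdot\bv_\vepsi$: it rests on the structural fact that the divergence of a bounded divergence-measure field does not charge $\cH^{m-1}$-null sets, which is precisely what forces the symmetric mollifications to converge $|\Div\cA|$-a.e.\ to a well-defined representative $\overline{\bv}$ of $\bv$.
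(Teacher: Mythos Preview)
The paper does not supply its own proof of this lemma: Section~\ref{sec-6.1} explicitly cites these product-rule identities from \cite{F1} (and, in the scalar case, from \cite{An,CF}), so there is no in-paper argument to compare against. Your mollification-and-limit strategy is exactly the standard route used in those references for pairings of this type, and the outline you give is correct: establish the Leibniz rule for $\bv_\vepsi=\eta_\vepsi\ast\bv$, pass to the limit term by term, and \emph{define} $\overline{\cA\cdot D\bv}$ (resp.\ $\overline{\cA\wedge D\bv}$) as the residual limit.

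Two points worth tightening. First, the step you flag as the ``main obstacle'' --- that $|\Div\cA|\ll\cH^{m-1}$ for $\cA\in\mathcal{DT}(\Ome)$ --- is indeed the crux and is not entirely elementary; it is proved (componentwise) in \cite{CF} for bounded divergence-measure vector fields, and you should cite it rather than leave it as an assertion. Combined with the fact that symmetric mollifications of a $BV$ function converge $\cH^{m-1}$-a.e.\ to the precise representative $\overline{\bv}=\frac12(\bv^++\bv^-)$ on the jump set and to the approximate limit elsewhere (cf.\ \cite[Cor.~3.80]{AFP}), this gives $\bv_\vepsi\to\overline{\bv}$ $|\Div\cA|$-a.e.\ and justifies dominated convergence for the term $(\Div\cA)\cdot\bv_\vepsi$. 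Second, since $\overline{\bv}$ is determined $\cH^{m-1}$-a.e.\ independently of the choice of symmetric mollifier, the measure $(\Div\cA)\cdot\overline{\bv}$ --- and hence $\overline{\cA\cdot D\bv}=\Div(\cA^T\bv)-(\Div\cA)\cdot\overline{\bv}$ --- does not depend on the mollifier either; it is worth saying this explicitly. With these two points recorded, your argument is complete and matches what the cited references do.
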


\begin{remark}
\reff{e6.37} and \reff{e6.38} hold without all the overbars if either
$\bv\in [BV(\Ome)\cap L^\infty(\Ome)\cap C(\Ome)]^n$ or $\cA\in \bY(\Ome)_1$.
\end{remark}

\begin{remark}
It should be note that the results of Lemmas \ref{lem6.8}-\ref{lem6.5} also hold
for the tensor fields in $\mathcal{DT}(\Ome)$ and
the vector fields in $[BV(\Ome)\cap L^\infty(\Ome)\cap C(\Ome)]^n$.
\end{remark}

\subsection{Existence of weak solutions of $1$-harmonic map heat flow} \label{sec-6.2}

Throughout the rest of this paper we let $B_1(\bR^{n\times m})$ denote the
unit ball in the Euclidean space $\bR^{n\times m}$;  that is,
\[
B_1(\bR^{n\times m})=\Bigl\{ \cA\in \mathbf{R}^{n\times m};\, 
|\cA|:=\Bigl(\sum_{j=1}^m \sum_{i=1}^n \cA_{ij}^2\Bigr)^{\frac12} \leq 1 \Bigr\}.
\]
In addition, let
\[
\mathbf{\sigma}_{\lam}^\vepsi:=\mu_{1,\lam}^\vepsi\bu^\vepsi,\qquad
\cB^\vepsi :=\frac{\nab\bu^\vepsi}{|\nab\bu^\vepsi|_\vepsi}.
\]
Hence $\cB_1^\vepsi=b_1(\vepsi)\nab\bu^\vepsi + \cB^\vepsi$ (cf. \reff{e4.9}).

We now give a definition of weak solutions to
\reff{e1.8}-\reff{e1.11} in the case $p=1$.

\begin{definition}\label{def6.1}
For $p=1$, a map $\bu: \Ome_T \rightarrow \mathbf{R}^n$ is called
a global {\em weak} solution to \reff{e1.8}-\reff{e1.11} if there exists
a tensor (or matrix-valued function) $\cB$ such that
\begin{itemize}
\item[{\rm (i)}] $\bu\in L^\infty((0,T);[BV(\Ome)\cap L^\infty(\Ome)]^n)
     \cap H^1((0,T); L^2(\Ome, \mathbf{R}^n))$,
\item[{\rm (ii)}] $|\bu|=1\quad \cL^{m+1}-$a.e. in $\Ome_T$,
\item[{\rm (iii)}] $\cB\in L^\infty((0,T);
L^\infty(\Ome,B_1(\mathbf{R}^{n\times m})))\cap L^2((0,T);\mathcal{DT}(\Ome))$,
\item[{\rm (iv)}] $\bu$ and $\cB$ satisfy $\cB\wedge\bu\in L^2((0,T);\bY(\Ome)_2)$, 
 $\cB^T \bu=0$, and $\cB\cdot (D\bu)^a=|D \bu|^a$ $\cL^{m+1}-$a.e. in $\Ome_T$. 
\item[{\rm (v)}] $\cB\bn=0$ on $\p\Ome_T$ in the sense of Lemma \ref{lem6.6}.
\item[{\rm (vi)}] there holds the identity
\begin{eqnarray*}
\int_0^T\int_\Ome\, \Bigl\{ \bu_t\cdot \bv +\cB\cdot \nab\bv 
+\lam (\bu-\bg)\cdot \bv \Bigr\}\, dxdt = 
\int_0^T \Bigl(\int_\Ome \bv\, d\mathbf{\sigma}_{\lam}\Bigr)\, dt
\end{eqnarray*}
for any $\bv\in C^1(\overline{\Ome}_T)$. Where $\mathbf{\sigma}_{\lam}$ 
denotes the vector-valued Radon measure
\begin{eqnarray}\label{e6.39}
\mathbf{\sigma}_{\lam} 
=(\cB\wedge\bu)\wedge D\bu + \lam\,(1-\bg\cdot\bu)\,\bu .
\end{eqnarray}
Moreover, the Radon measure $(\cB\wedge\bu)\wedge D\bu$ is absolutely 
continuous with respect to the measure $|D\bu|$, and  for
$\cL^1-$ a.e. $t\in (0,T)$ there exists a function $\Phi(\cB,D\bu,x,t)
:\Ome\rightarrow \bR$ such that
\end{itemize}
\begin{eqnarray*}\label{e6.40}
&&((\cB\wedge\bu)\wedge D\bu) (E) = \int_E\, \Phi(\cB\wedge\bu,D\bu,x,t)\, d|D\bu|
\qquad\mbox{for all Borel sets } E\subset \Ome, \\
&&\norm{\Phi(t)}{L^\infty(\Ome,|D\bu|)}
\leq \norm{\cB(t)}{L^\infty(\Ome,\bR^{n\times m})} 
\quad\mbox{for $\cL^1$ a.e. } t\in (0,T), \label{e6.41} \\
&&\Phi(\cB\wedge\bu,D\bu,x,t) = (\cB(t)\wedge \bu)\wedge \frac{D\bu}{|D\bu|}
\quad |D\bu|^a \mbox{a.e. in }\Ome, \mbox{ $\cL^1$ a.e. } t\in (0,T),
\label{e6.42}\\
&& ((\cB\wedge\bu)\wedge D\bu)^a= (\cB\wedge\bu)\wedge (D\bu)^a
= |D\bu|^a\bu\qquad \cL^{m+1}\mbox{ a.e. in $\Ome_T$.} \label{e6.43}
\end{eqnarray*}

\end{definition}

\begin{remark}\label{rem6.1}
(a). The tensor field $\cB$ extends $\frac{D \bu}{|D\bu|}$ as
a calibration across the gulfs.

(b). If $\bu(t)\in W^{1,1}_{\tiny{\rm loc}}(\Ome)$ and $\cB(t)\in \bY(\Ome)_1$,
using (ii) and (iv), and the identity $(\mathbf{a}\wedge\mathbf{b})\wedge \mathbf{c}
=(\mathbf{a}\cdot \mathbf{c}) \mathbf{b}-(\mathbf{b}\cdot \mathbf{c}) \mathbf{a}$ we get
that $(\cB\wedge\bu)\wedge D\bu =|\nab\bu|\,\bu$. Hence, \reff{e6.39} can be rewritten as
\begin{equation}\label{e6.44}
\mathbf{\sigma}_{\lam} 
=|\nab\bu|\,\bu +\lam\,(1-\bg\cdot\bu)\,\bu.
\end{equation}
Thus, \reff{e6.39} is a weak form of \reff{e6.44} since $|D\bu|\,\bu$ may not
be defined for $\bu(t)\in [BV(\Ome)\cap L^\infty(\Ome)]^n$ and
$\cB(t) \in \mathcal{DT}(\Ome)$.
\end{remark}

Our main result of this section is the following existence theorem.

\begin{theorem}\label{thm6.1}
Let $p=1$, suppose that $\bu_0\in H^1(\Ome,\mathbf{R}^n)$ 
and $\bg\in L^2(\Ome,\mathbf{R}^n)$ with $|\bu_0|= 1$ and 
$|\bg|\leq 1$ a.e. in $\Ome$. Then, problem \reff{e1.8}-\reff{e1.11}
has a global weak solution $\bu$ in the sense of Definition \ref{def6.1}.
Moreover, $\bu$ satisfies the energy inequality
\begin{equation}\label{e6.45}
I_\lam(\bu(s)) +\int_0^s \norm{\bu_t(t)}{L^2}^2\, dt \leq
I_\lam(\bu_0)\qquad\mbox{for a.e. }  s\in [0,T],
\end{equation}
where
\begin{equation}\label{e6.46}
I_\lam(\bu):= |D \bu|(\Ome) +\frac{\lam}2\int_\Ome\, |\bu-\bg|^2\,dx .
\end{equation}
\end{theorem}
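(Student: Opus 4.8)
The plan is to obtain $\bu$ as a limit of the $\vepsi$-regularized solutions $\bu^\vepsi$ from Theorem \ref{thm4.1} (applied with the particular choice $b_1(\vepsi)=\vepsi^\alpha$, $p^*=2$ as in Remark \ref{rem3.1}(b)), smoothing the data first so that $\bu_0\in[C^3(\overline\Ome)]^n$ with $|\bu_0|=1$ and $|\bg|\le 1$, and recovering the general case $\bu_0\in H^1$, $\bg\in L^2$ at the end by a density argument. From the $\vepsi$-uniform bounds \reff{e4.6a}--\reff{e4.8b} we have $\{\bu^\vepsi\}$ bounded in $L^\infty((0,T);[W^{1,1}(\Ome)\cap L^\infty(\Ome)]^n)\cap H^1((0,T);L^2)$, $\{\cB^\vepsi\}$ bounded in $L^\infty(\Ome_T)$ with $|\cB^\vepsi|\le 1$ pointwise, $\{\Div\cB_1^\vepsi\}$ bounded in $L^2(L^1)$ (hence also $\{\Div\cB^\vepsi\}$ once we control $b_1(\vepsi)\Del\bu^\vepsi$ in $L^2(L^1)$ via \reff{e4.8b} and a Sobolev argument, so $\cB^\vepsi\in L^2(\mathcal{DT}(\Ome))$ with uniform bound), and $\{\Div(\cB_1^\vepsi\wedge\bu^\vepsi)\}$ bounded in $L^2(L^2)$. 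Extracting a subsequence I would get: $\bu^\vepsi\to\bu$ weak-$*$ in $L^\infty((0,T);[BV(\Ome)]^n)$, strongly in $L^r(\Ome_T)$ for all $r<\infty$ and a.e.\ (using the $H^1(L^2)$ time-regularity plus a compactness/Aubin--Lions-type argument in a $BV$--$L^2$ scale, with $b_1(\vepsi)\nab\bu^\vepsi\to 0$ in $L^2(L^2)$ from \reff{e4.8b}); $\bu^\vepsi_t\to\bu_t$ weakly in $L^2(L^2)$; $\cB^\vepsi\to\cB$ weak-$*$ in $L^\infty(\Ome_T;B_1(\bR^{n\times m}))$ and $\Div\cB^\vepsi\to\Div\cB$ weak-$*$ as measures (so $\cB\in L^2(\mathcal{DT}(\Ome))$); and $\cB^\vepsi\wedge\bu^\vepsi\to\cB\wedge\bu$ with $\Div(\cB^\vepsi\wedge\bu^\vepsi)\to\Div(\cB\wedge\bu)$ weakly in $L^2(L^2)$, giving $\cB\wedge\bu\in L^2(\bY(\Ome)_2)$. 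From $|\bu^\vepsi|\to 1$ a.e.\ we get $|\bu|=1$ a.e., and passing to the limit in $(\cB_1^\vepsi)^T\bu^\vepsi=0$ (which holds a.e.\ once $|\bu^\vepsi|=1$... here only $|\bu^\vepsi|\le 1$, but the identity $(\cB^\vepsi)^T\bu^\vepsi=\frac{(\nab\bu^\vepsi)^T\bu^\vepsi}{|\nab\bu^\vepsi|_\vepsi}=\frac{\frac12\nab|\bu^\vepsi|^2}{|\nab\bu^\vepsi|_\vepsi}$ does vanish in the limit by \reff{e3.3}) yields $\cB^T\bu=0$.

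Next I would identify the right-hand side measure. The key structural fact to pass to the limit in \reff{e4.21} (the wedge-form equation, whose $\vepsi$-analogue is already derived in Theorem \ref{thm4.1}) is Lemma \ref{lem6.8}: since $\cB^\vepsi\wedge\bu^\vepsi\to\cB\wedge\bu$ weak-$*$ in $L^\infty$ and $\Div(\cB^\vepsi\wedge\bu^\vepsi)\to\Div(\cB\wedge\bu)$ weakly in $L^1_{\mathrm{loc}}$, we obtain $(\cB^\vepsi\wedge\bu^\vepsi)\wedge D\bu^\vepsi\to(\cB\wedge\bu)\wedge D\bu$ weak-$*$ as measures. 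Combined with $\bu^\vepsi_t\wedge\bu^\vepsi\to\bu_t\wedge\bu$ weakly in $L^1$, $\bg\wedge\bu^\vepsi\to\bg\wedge\bu$ strongly, and the $\vepsi$-version of \reff{e4.23}, this lets me derive the $BV$-analogues of \reff{e5.13}--\reff{e5.14}, namely a wedge equation against $\bw=\bu\wedge\bv$ involving the measure $(\cB\wedge\bu)\wedge D\bu$ and a ``scalar'' equation against $\varphi=\bu\cdot\bv$ involving $\cB\cdot D\bu$. The crucial point is that $\cB^\vepsi\cdot(\nab\bu^\vepsi)^a=\frac{|\nab\bu^\vepsi|^2}{|\nab\bu^\vepsi|_\vepsi}\to|D\bu|^a$, which (with lower semicontinuity) forces $\cB\cdot(D\bu)^a=|D\bu|^a$, i.e.\ property (iv); and together with $\cB^T\bu=0$ and the wedge identity $(\ba\wedge\bb)\wedge\bc=(\ba\cdot\bc)\bb-(\bb\cdot\bc)\ba$, this yields $(\cB\wedge\bu)\wedge(D\bu)^a=|D\bu|^a\,\bu$ and the representations for $\Phi$ in Definition \ref{def6.1}, using Corollary \ref{cor6.1} and Lemma \ref{lem6.5}. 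Subtracting the two limiting equations via $\bv=(\bu\cdot\bv)\bu-\bu\wedge(\bu\wedge\bv)$ exactly as in the $1<p<\infty$ case produces identity (vi) with $\sigma_\lam$ as in \reff{e6.39}. The boundary condition $\cB\bn=0$ (property (v)) follows by passing to the limit in $\cB_1^\vepsi\bn=0$ using the trace operator of Lemma \ref{lem6.6} (or Lemma \ref{lem6.3}), noting weak-$*$ continuity of $\beta$/$\alpha$ under the convergence of $\cB^\vepsi$ and $\Div\cB^\vepsi$.

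Finally, the energy inequality \reff{e6.45} comes from \reff{e4.6a}: for each $\vepsi$ we have
\begin{equation*}
\int_\Ome\Bigl\{\frac{b_1(\vepsi)}2|\nab\bu^\vepsi(s)|^2+|\nab\bu^\vepsi(s)|_\vepsi\Bigr\}\,dx
+\frac{\lam}2\int_\Ome|\bu^\vepsi(s)-\bg|^2\,dx
+\int_0^s\norm{\bu^\vepsi_t(t)}{L^2}^2\,dt\le J_{1,\lam}^\vepsi(\bu_0),
\end{equation*}
and $J_{1,\lam}^\vepsi(\bu_0)\to I_\lam(\bu_0)$ as $\vepsi\to 0$ (since $|\nab\bu_0|_\vepsi\to|\nab\bu_0|$ and $b_1(\vepsi)\to0$, $\bu_0$ being smooth); then I use $|\nab\bu^\vepsi|_\vepsi\ge|\nab\bu^\vepsi|$, weak-$*$ lower semicontinuity of total variation $|D\cdot|(\Ome)$ under $L^1$-convergence, weak lower semicontinuity of the $L^2$-norms, and Fatou in time. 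For the general data I would smooth $\bu_0,\bg$ so that $|\bu_{0,\rho}|=1$ (projecting onto $S^{n-1}$ after mollification, using $|\bu_0|=1$ to keep $H^1$-convergence) and $|\bg_\rho|\le 1$, pass to the limit $\rho\to0$ in the already-constructed weak solutions using the same compactness and lower-semicontinuity tools, and check that all the measure-theoretic identities (iv) and the $\Phi$-representations survive. The main obstacle I anticipate is the compactness step giving strong $L^r(\Ome_T)$ convergence of $\bu^\vepsi$ and, more delicately, the simultaneous weak-$*$ convergence of the \emph{products} $\cB^\vepsi\wedge\bu^\vepsi$ together with their divergences in a form precise enough to invoke Lemma \ref{lem6.8} and thereby pass to the limit in the nonlinear measure $(\cB^\vepsi\wedge\bu^\vepsi)\wedge D\bu^\vepsi$; this is where the $BV$ framework and the divergence-measure pairing machinery of Section \ref{sec-6.1} are essential and where the argument genuinely departs from the $1<p<\infty$ case.
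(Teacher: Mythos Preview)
Your overall compactness setup and the energy-inequality argument are on the right track, but the heart of your proof---identifying the limit measure $\sigma_\lam$---contains a genuine gap. You invoke Lemma~\ref{lem6.8} to conclude that $(\cB^\vepsi\wedge\bu^\vepsi)\wedge D\bu^\vepsi \to (\cB\wedge\bu)\wedge D\bu$ weak-$*$ as measures. But Lemma~\ref{lem6.8} is a continuity result in the \emph{first} argument only: it gives $\cA_j\wedge D\bv \to \cA\wedge D\bv$ for a \emph{fixed} $\bv\in[BV\cap L^\infty]^n$. You are varying both slots simultaneously ($\cA_j=\cB^\vepsi\wedge\bu^\vepsi$ \emph{and} $\bv=\bu^\vepsi$), and there is no reason the product of two weakly convergent objects should behave well; this is precisely the nonlinear limit the whole $BV$ pairing machinery is designed to circumvent, not to take head-on.

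The paper avoids this trap by reordering the argument. It first passes to the limit in the equations that are \emph{linear in the test function}, namely \eqref{e4.21} and \eqref{e4.25}, obtaining \eqref{e6.57} and \eqref{e6.58} (the latter with an as-yet unidentified measure $\sigma_\lam$). Then, working entirely at the level of the limit $\bu$, it substitutes $\bw=\bu_\rho\wedge\bv$ into \eqref{e6.57}, where $\bu_\rho$ is a smooth strict approximation of $\bu$, and sends $\rho\to 0$ using Lemma~\ref{lem6.9} (continuity of the pairing in the \emph{second} argument under strict convergence). This yields \eqref{e6.100}, which on comparison with \eqref{e6.58} identifies $\sigma_\lam$ as in \eqref{e6.39}. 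The point is that the nonlinear passage happens \emph{after} $\vepsi\to 0$, with only one $BV$ sequence (mollifications of the fixed $\bu$) in play. Two smaller remarks: your derivation of $\cB^T\bu=0$ is unnecessarily roundabout, since Theorem~\ref{thm4.1} already gives $|\bu^\vepsi|=1$ exactly (equation \eqref{e4.1}), so $(\cB^\vepsi)^T\bu^\vepsi=0$ holds pointwise by \eqref{e4.22} and the limit is immediate; and your identification $\cB\cdot(D\bu)^a=|D\bu|^a$ is too sketchy---the paper needs a two-sided argument, combining a double mollification limit and Lemma~\ref{lem6.7} to show $\mu=\overline{\cB\cdot D\bu}$ on one side with the pointwise inequality $\cB^\vepsi\cdot\nab\bu^\vepsi\ge|\nab\bu^\vepsi|_\vepsi-\vepsi$ plus lower semicontinuity of the $BV$ seminorm to show $\mu\ge|D\bu|$ on the other, before taking absolutely continuous parts.
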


\begin{proof}
The proof is divided into four steps. 

\medskip
{\em Step 1: Extracting a convergent subsequence and passing to the limit}.
Let $\bu^\vepsi$ denote the solution of \reff{e4.1}-\reff{e4.4} constructed 
in Theorem \ref{thm4.1}.
It follows from \reff{e4.1}, \reff{e4.6a}, \reff{e4.7}, 
and \reff{e4.8} that $\{\bu^{\vepsi}\}_{\vepsi>0}$ is uniformly
bounded in $L^\infty((0,T);$ $[W^{1,1}(\Ome)\cap
L^\infty(\Ome)]^n) \cap H^1((0,T); L^2(\Ome,\mathbf{R}^n))$,
$\{\cB^{\vepsi}\}_{\vepsi>0}$ in
$L^\infty((0,T);L^\infty(\Ome,B_1(\mathbf{R}^{n\times m})))$ 
and $\{\Div\cB_1^\vepsi\}_{\vepsi>0}$ in 
$L^2((0,T);L^1(\Ome;\mathbf{R}^n))$, and
$\{\mathbf{\sigma}_{\lam}^{\vepsi}\}_{\vepsi>0}$ is uniformly
bounded in $L^\infty((0,T);$ $L^1(\Ome,\mathbf{R}^n))$. Since
$L^1(\Ome)\subset \mathcal{M}(\Ome)$ and $W^{1,1}(\Ome)\subset
BV(\Ome)$, by the weak compactness of $\mathcal{M}(\Ome)$ and
$BV(\Ome)$ (cf. \cite{AFP}) we have that there exist subsequences
of $\{\bu^{\vepsi}\}_{\vepsi>0}$, $\{\cB^{\vepsi}\}_{\vepsi>0}$,
$\{\cB_1^{\vepsi}\}_{\vepsi>0}$, and 
$\{\mathbf{\sigma}_{\lam}^{\vepsi}\}_{\vepsi>0}$ (still denoted
by the same notation), respectively, and maps $\bu \in
L^\infty((0,T);$ $ [BV(\Ome)\cap L^\infty(\Ome)]^n)$ $ \cap
H^1((0,T); L^2(\Ome,\mathbf{R}^n))$, $\mathcal{B} \in
L^\infty((0,T); L^\infty(\Ome,B_1(\mathbf{R}^{n\times m})))$, 
$\mathbf{\nu}\in L^2((0,T);[\mathcal{M}(\Ome)]^n)$, and
$\mathbf{\sigma}_{\lam} \in L^\infty((0,T);$ 
$[\mathcal{M}(\Ome)]^n)$, respectively, such that as
$\vepsi\rightarrow 0$
\begin{alignat}{2}\label{e6.47}
\bu^{\vepsi} &\longrightarrow \bu &&\qquad\mbox{weakly* in }
L^\infty((0,T);[BV(\Ome)\cap L^\infty(\Ome)]^n), \\
& &&\qquad\mbox{strongly in } L^2((0,T);L^1(\Ome,\mathbf{R}^n)) \label{e6.48}\\
& &&\qquad\mbox{a.e. in } \Ome_T, \label{e6.49} \\
b_1(\vepsi)\nab\bu^\vepsi &\longrightarrow 0 &&\qquad\mbox{weakly
in } L^2((0,T);L^2(\Ome,\mathbf{R}^{n\times m})), \label{e6.50}\\
\bu^{\vepsi}_t &\longrightarrow \bu_t  &&\qquad\mbox{weakly in }
L^2((0,T);L^2(\Ome,\mathbf{R}^n)), \label{e6.51}\\
\Div\cB_1^\vepsi &\longrightarrow \mathbf{\nu} &&\qquad\mbox{weakly* in }
L^2((0,T);[\mathcal{M}(\Ome)]^n) ,\label{e6.52}\\
\cB^\vepsi &\longrightarrow \cB &&\qquad\mbox{weakly* in }
L^\infty((0,T);L^\infty(\Ome,B_1(\mathbf{R}^{n\times m}))), \label{e6.53} \\
\mathbf{\sigma}_{\lam}^\vepsi &\longrightarrow
\mathbf{\sigma}_{\lam}
 &&\qquad\mbox{weakly* in }
L^\infty((0,T); [\mathcal{M}(\Ome)]^n). \label{e6.54}
\end{alignat}

It follows immediately from \reff{e6.49} and \reff{e4.15} that
\begin{equation}\label{e6.55}
|\bu| = 1\qquad\mbox{a.e. in } \Ome_T ,
\end{equation}
and an application of the Lebesgue dominated convergence theorem yields that
\begin{equation}\label{e6.56}
\bu^{\vepsi} \overset{\vepsi\searrow 0}{\longrightarrow}\bu
\qquad\mbox{strongly in } L^r((0,T);L^r(\Ome,\mathbf{R}^n)),\quad\forall
r\in [1, \infty).
\end{equation}

Now, taking $\vepsi\rightarrow 0$ in \reff{e4.21}, and
\reff{e4.25} (with $p=1$) we have for any $\bv\in
[C^1({\overline{\Ome}}_T)]^n$ and $\bw\in L^\infty((0,T);H^1(\Ome;\bR^n))$ 
\begin{eqnarray}\label{e6.57}
&&\int_0^T \Bigl\{ \bigl(\, \bu_t\wedge\bu, \bw\,\bigr)
+\bigl(\, \cB\wedge\bu,\nab\bw\,\bigr)
-\lam\,\bigl(\,\bg\wedge\bu,\bw\,\bigr)\Bigr\}\,dt =0, \\ 
&&\int_0^T \Bigl\{ \bigl(\, \bu_t, \bv\,\bigr) +\bigl(\,\cB, \nab\bv\,\bigr) +\lam
\bigl(\, \bu-\bg, \bv\,\bigr)\Bigr\}\,dt 
= \int_0^T \Bigl(\int_\Ome \bv d\mathbf{\sigma}_{\lam}\Bigr)\, dt .  \label{e6.58}
\end{eqnarray}

\medskip
{\em Step 2: Identifying $\mathbf{\nu}$ and $\mathbf{\sigma}_\lam$}.
Firstly, it follows from the identity $(\cB^\vepsi)^T\bu^\vepsi=0$ (cf.
\reff{e4.22}), \reff{e6.53}, and \reff{e6.56} that
\begin{equation}\label{e6.59}
\cB^T \bu = 0\qquad\mbox{a.e. in } \Ome_T .
\end{equation}

Secondly, for any $\bv\in [C^1(\overline{\Ome}_T)]^n$, it follows from \reff{e6.50},
\reff{e6.52}, and \reff{e6.53} that
\[
\int_{\Ome_T} \bv\cdot d\mathbf{\nu}=
\lim_{\vepsi\rightarrow 0} \int_{\Ome_T} \bv\cdot \Div\,\cB_1^\vepsi\,dx dt
=-\lim_{\vepsi\rightarrow 0}\int_{\Ome_T} \nab\bv\cdot\cB_1^\vepsi\,dx dt
=-\int_{\Ome_T} \nab\bv\cdot \cB\, dx dt.
\]
Hence, $\Div\cB$ exists and 
\begin{equation}\label{e6.60}
\Div\,\cB= \mathbf{\nu}, \qquad\mbox{therefore }\quad 
\cB\in L^2((0,T);\mathcal{DT}(\Ome)).
\end{equation}
It then follows from \eqref{e6.25} that 
\[
\cB \mathbf{n} =0 \qquad\text{on }  \p\Ome_T.
\]

Thirdly, notice that \reff{e6.57} immediately implies that 
\begin{equation}\label{e6.61}
\Div (\cB\wedge \bu) \in L^2(\Ome_T, \bR^n),\qquad\mbox{hence }\quad
\cB\wedge \bu \in L^2((0,T); \bY(\Ome)_2).
\end{equation}

Let $\{\bu_\rho\}$ denote the smooth approximation sequence of $\bu$ as
constructred in Theorem 3.9 of \cite{AFP}. For any $\bv\in [C^1_0(\Ome_T)]^n$, 
set $\bw=\bu_\rho\wedge\bv$ in \reff{e6.57} we get
\begin{equation}\label{e6.62}
\int_0^T \Bigl\{ \bigl(\bu_t, \bu\wedge(\bu_\rho\wedge \bv) \bigr)
+\bigl(\cB\wedge\bu,\nab(\bu_\rho\wedge \bv) \bigr)
-\lam\,\bigl(\bg, \bu\wedge(\bu_\rho\wedge \bv) \bigr)\Bigr\}\,dt=0.
\end{equation}
It follows from \reff{e6.55}, the convergence property of $\bu_\rho$
(cf. \cite{AFP}), and the identity $\bu\wedge (\bu\wedge \bv)
= \bu (\bu\cdot \bv)- \bv$ that
\begin{eqnarray*}
&&\lim_{\rho\rightarrow 0} \bigl(\bu_t, \bu\wedge(\bu_\rho\wedge \bv) \bigr)
= \bigl(\bu_t, \bu\wedge(\bu\wedge \bv) \bigr)
= \bigl(\bu_t, \bu (\bu\cdot \bv)-\bv) )
=- \bigl(\bu_t,\bv \bigr), \\
&& \lim_{\rho\rightarrow 0}\bigl(\bg, \bu\wedge(\bu_\rho\wedge \bv) \bigr)
= \bigl(\bg, \bu\wedge(\bu\wedge \bv) \bigr)
= \bigl(\bu-\bg, \bv\bigr) 
- \bigl((1-\bg\cdot\bu)\bu, \bv\bigr).
\end{eqnarray*}
It follows from Theorem 3.9 of \cite{AFP},  Lemma \ref{lem6.9}  and the identity
\[
\bigl(\cB\wedge\bu, \nab(\bu_\rho\wedge\bv)\bigr) 
=\bigl( (\cB\wedge\bu)\wedge\bu_\rho,\nab\bv\bigr) 
 + \bigl( (\cB\wedge\bu)\wedge\nab\bu_\rho,\bv\bigr)
\]
that
\begin{eqnarray*}
\lim_{\rho\rightarrow 0} \bigl(\cB\wedge\bu, \nab(\bu_\rho\wedge\bv)\bigr)
&=&\bigl( (\cB\wedge\bu)\wedge\bu,\nab\bv\bigr)
 + \langle (\cB\wedge\bu)\wedge D\bu,\bv \rangle \\
&=&-\bigl( \cB,\nab\bv\bigr)
 + \langle (\cB\wedge\bu)\wedge D\bu,\bv \rangle.
\end{eqnarray*}
Here we have used the fact that $(\cB\wedge\bu)\wedge\bu=-\cB$
in the light of \reff{e6.55} and \reff{e6.59}, and the measure $(\cB\wedge\bu)\wedge D\bu$
is defined by \reff{e6.14} with $\cA=\cB\wedge\bu$ and $\bv=\bu$.

Finally, substituting the above three equations into \reff{e6.62} and
multiplying the equation by $(-1)$ we get
\begin{eqnarray}\label{e6.100}
\int_0^T \Bigl\{ \bigl(\bu_t, \bv\bigr) +\bigl(\cB, \nab\bv\bigr) +\lam
\bigl(\bu-\bg, \bv\bigr) \Bigr\}\,dt
&&=\int_0^T \bigl\langle (\cB\wedge\bu)\wedge D\bu \\
&&\qquad
+\lam (1-\bg\cdot\bu)\bu, \bv\bigr\rangle \,dt \nonumber
\end{eqnarray}
for any $\bv\in [C^1_0(\Ome_T)]^n$. This and \reff{e6.58} imply that 
\[
\mathbf{\sigma}_{\lam}=(\cB\wedge\bu)\wedge D\bu+\lam (1-\bg\cdot\bu)\bu.
\]

\medskip
{\em Step 3: Identifying $\cB$}.
First, since $\{\cB^\vepsi\cdot\nab\bu^\vepsi\}$ is uniformly bounded
in $L^2((0,T);$ $L^1(\Ome))$, then there exists a subsequence (still denoted by the
same notation) and $\mu\in L^2((0,T);\cM(\Ome))$ such that
\begin{equation}\label{e6.63}
\cB^\vepsi\cdot\nab\bu^\vepsi\longrightarrow \mu
\quad\mbox{weakly* in } L^2((0,T);\cM(\Ome)).
\end{equation}

Let $\bu_\rho^\vepsi$ and $\bu_\rho$ denote mollified sequences for
$\bu^\vepsi$ and $\bu$, respectively, through a positive symmetric
mollifier. For any open set $E\subset \Ome$ we have
\begin{eqnarray}\label{e6.64}
\lim_{\vepsi\rightarrow 0}\int_0^T\int_E \cB^\vepsi\cdot\nab\bu^\vepsi\,dx dt
&=& \lim_{\vepsi\rightarrow 0} \lim_{\rho\rightarrow 0}
\int_0^T \int_E \cB^\vepsi\cdot D\bu^\vepsi_\rho \, dx dt \\
&=& \lim_{\rho\rightarrow 0} \lim_{\vepsi\rightarrow 0}
\int_0^T \int_E \cB^\vepsi\cdot D\bu^\vepsi_\rho\, dx dt \nonumber\\
&=& \lim_{\rho\rightarrow 0} \int_0^T\, (\cB\cdot D\bu_\rho)(E)\, dt
\quad \mbox{(by \reff{e6.53}, \reff{e6.56})} \nonumber\\
&=& \int_0^T\,\overline{\cB\cdot D\bu }\,(E)\,dt ,
\quad \mbox{(by \reff{e6.37}) } \nonumber
\end{eqnarray}
where $\overline{\cB\cdot D\bu }$ is defined in Lemma \ref{lem6.7}.

Hence, it follows from \reff{e6.63}, \reff{e6.64} and Lemma \ref{lem6.7} that
\begin{eqnarray}\label{e6.65}
\mu = \overline{\cB\cdot D\bu } << |D\bu| .
\end{eqnarray}
We refer the reader to Definition 1.24 of \cite{AFP} for the notation ``$<<$". 

On the other hand, a direct calculation yields that
\[
\cB^\vepsi\cdot \nab\bu^\vepsi = \frac{|\nab\bu^\vepsi|^2}{|\nab\bu^\vepsi|_\vepsi} 
\geq |\nab\bu^\vepsi|_\vepsi - \vepsi  .
\]
Setting $\vepsi\rightarrow 0$ and by the lower semicontinuity
of the $BV$-norm  (cf. \cite{AFP}) we obtain
\[
\mu >> |D \bu|,
\]
which together with \reff{e6.65} and Lemma \ref{lem6.7} yield that
\begin{equation}\label{e6.66}
|D\bu|=\mu=\overline{\cB\cdot D\bu},
\end{equation}
hence,
\begin{equation}\label{e6.67}
|D\bu|^a=(\overline{\cB\cdot D\bu})^a=\cB\cdot (D\bu)^a .
\end{equation}

Finally, we note that all other properties of $\cB$ and the
measure $(\cB\wedge\bu)\wedge D\bu$ listed in (vi) of Definition \ref{def6.1}
are immediate consequences of \reff{e6.66} and Lemmas \ref{lem6.2}, 
\ref{lem6.3}-\ref{lem6.7}
and Corollary \ref{cor6.1}.

\medskip
{\em Step 4: Finishing up.} 
We now conclude the proof of the theorem by showing the
energy inequality \reff{e6.45}. First, notice that \reff{e4.6a}
implies that for a.e. $s\in [0,T]$
\begin{eqnarray}\label{e6.68}
I_\lam (\bu^\vepsi(s)) + \int_0^s \norm{\bu^{\vepsi}_t(t)}{L^2}^2\, dt
&\leq& J_{1,\lambda}^\vepsi (\bu_0) \\
&\leq& \frac{b_1(\vepsi)}2\norm{\nab\bu_0}{L^2}^2 + a_1(\vepsi) |\Ome|
+ I_\lam (\bu_0). \nonumber
\end{eqnarray}
Then \reff{e6.45} follows from taking $\vepsi\rightarrow 0$ in
\reff{e6.68}, using the lower semicontinuity of the $BV$-seminorm
and the $L^2$-norm with respect to $L^2$-weak convergence.
The proof is complete.
\end{proof}

\begin{remark}\label{rem6.2}
(a). Since weak solutions to \reff{e1.8}-\reff{e1.11} are not
unique in general for $1<p<\infty$ (cf. \cite{Co,Hu1,M2}), we
expect that this nonuniqueness also holds for the case $p=1$.

(b). The existence of Theorem \ref{thm6.1} is proved under the assumption
$\bu_0\in H^1(\Ome,\mathbf{R}^n)$. This assumption can be weaken to
$\bu_0\in [BV(\Ome)\cap L^\infty(\Ome)]^n$ using a smoothing technique.
\end{remark}

\section{Fully discrete finite element approximations} \label{sec-7}

\subsection{Formulation of fully discrete finite element methods}\label{sec-7.1}
For ease of exposition, we assume $\Ome$ is a polytope in this section.
Let $\Tc_h$ be a quasi-uniform ``triangulation" of the domain
$\Ome$ of mesh size $0<h<1$ and $\overline{\Omega} = \bigcup_{K
\in \mathcal{T}_h} \overline{K}$ ($K\in \mathcal{T}_h$ are
tetrahedrons in the case $m=3$). Let $J_\tau:=\{t_k\}_{k=0}^L$ be a
uniform partition of $[0,T]$ with mesh size $\tau:=\frac{T}{L}$,
and $\p_t \bv^k:=(\bv^k-\bv^{k-1})/\tau$. For an integer
$r\geq 1$, let $P_r(K)$ denote the space of polynomials of degree less
than or equal to $r$ on $K$. We introduce the finite element space
\[
\bV^h=\bigl\{ \bv_h\in C(\overline{\Ome},\bR^n)\cap H^1(\Ome,\bR^n);\,
\bv_h|_K\in [P_r(K)]^n,\,\forall K\in\cT_h\bigr\} .
\]

Notice that the density function $F$ defined in \reff{e1.15} is
not a convex function. On the other hand, there exist two convex
functions $W_{+}$ and $W_{-}$ such that
\begin{equation}\label{e7.1a}
F(\bv)=W_{+}(\bv)-W_{-}(\bv).
\end{equation}
One such an example is $W_{+}(\bv)=\frac{|\bv|^4}4$ and
$W_{-}(\bv)=\frac{|\bv|^2}2-\frac14$. Clearly, the above
decomposition is not unique.

We are now ready to introduce our fully discrete finite element
discretizations for the initial boundary value problem
\reff{e1.18}-\reff{e1.20}. Find $\bu^k_h\in \bV^h$ for
$k=1,2,\cdots,L$ such that
\begin{eqnarray}\label{e7.2}
&&\bigl(\p_t\bu_h^k, \bv_h\bigr) + \bigl(\cB_h^k, \nab\bv_h\bigr)
+\lam\bigl(\bu_h^k-\bg,\bv_h\bigr) \\
&&\hskip 1.5in
+\frac{1}{\del}\bigl(\, W_{+}^\prime(\bu_h^k),\bv_h\,\bigr)
=\frac{1}{\del}\bigl(W_{-}^\prime(\bu_h^{k-1}),\bv_h\bigr)
\quad \forall \bv\in\bV^h, \nonumber\\
&&\cB_h^k=\bigl[\,b_p(\vepsi) + |\nab \bu_h^k|_\vepsi^{p-2}\,\bigr]\nab\bu_h^k
\label{e7.3}
\end{eqnarray}
with some starting value $\bu_h^0\in \bV^h$ to be specified later.
Note that for notational brevity we have omitted the indices $\vepsi,\del$
and $p$ on $\bu_h^k$ and $\cB_h^k$

For each $k$, equation \reff{e7.2} is a nonlinear equation in
$\bu_h^k$. Hence, the above numerical method is an implicit
scheme, its well-posedness is ensured by the following theorem.

\begin{theorem}\label{thm7.1}
For each fixed $k\geq 1$, suppose that $\bu_h^{k-1}\in \bV^h$ is known, then there
exists a unique solution $\bu_h^k \in \bV^h$ to \reff{e7.2}-\reff{e7.3}. Moreover,
$\{\bu_h^k\}_{k=0}^L$ satisfies the following energy estimate
\begin{eqnarray}\label{e7.5}
\frac{\tau}2\sum_{k=1}^\ell\, \norm{\p_t\bu_h^k}{L^2}^2 +
J_{p,\lam}^{\vepsi,\del}(\bu_h^k) \leq
J_{p,\lam}^{\vepsi,\del}(\bu_h^0) 
\qquad \mbox{for } 1 \leq \ell \leq L.
\end{eqnarray}
Here $J_{p,\lam}^{\vepsi,\del}$ is defined by \reff{e1.17}.
\end{theorem}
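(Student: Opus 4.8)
The plan is to prove existence and uniqueness for each time step by variational methods, and then to derive the energy estimate by testing the scheme with the discrete time derivative.

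First, for the existence and uniqueness at a fixed step $k$: the equation \reff{e7.2} is precisely the Euler--Lagrange equation for the functional
\[
\Psi(\bv_h):= \frac{1}{2\tau}\norm{\bv_h-\bu_h^{k-1}}{L^2}^2 + E_p^\vepsi(\bv_h)
+\frac{\lam}2\norm{\bv_h-\bg}{L^2}^2 + \frac1\del\int_\Ome W_+(\bv_h)\,dx
-\frac1\del\bigl(W_-'(\bu_h^{k-1}),\bv_h\bigr)
\]
over the finite-dimensional space $\bV^h$. The key point is that $\Psi$ is \emph{strictly convex and coercive} on $\bV^h$: the quadratic term $\frac{1}{2\tau}\norm{\bv_h-\bu_h^{k-1}}{L^2}^2$ alone gives strict convexity and coercivity in $L^2$, the term $E_p^\vepsi$ is convex (since $\xi\mapsto \frac{b_p(\vepsi)}2|\xi|^2+\frac1p|\xi|_\vepsi^p$ is convex), $W_+$ is convex by the decomposition \reff{e7.1a}, and the remaining terms are either convex quadratic or affine in $\bv_h$. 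On a finite-dimensional space a continuous, strictly convex, coercive functional attains its infimum at a unique point, which is the unique critical point; this gives the unique $\bu_h^k\in\bV^h$ solving \reff{e7.2}--\reff{e7.3}. I would state this cleanly and note that the $b_p(\vepsi)$ term, though helpful, is not essential here since the $L^2$ regularization from the time-stepping already supplies coercivity.

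Second, for the energy estimate \reff{e7.5}: test \reff{e7.2} with $\bv_h=\p_t\bu_h^k=(\bu_h^k-\bu_h^{k-1})/\tau\in\bV^h$. The first term yields $\norm{\p_t\bu_h^k}{L^2}^2$. For the $p$-energy and the fidelity terms I would use the convexity inequality $\phi(\bu_h^k)-\phi(\bu_h^{k-1})\le \phi'(\bu_h^k)\cdot(\bu_h^k-\bu_h^{k-1})$, applied to the convex integrands governing $E_p^\vepsi$ and $\frac\lam2\norm{\cdot-\bg}{L^2}^2$, to get
\[
\bigl(\cB_h^k,\nab(\bu_h^k-\bu_h^{k-1})\bigr) \ge E_p^\vepsi(\bu_h^k)-E_p^\vepsi(\bu_h^{k-1}),
\qquad
\lam\bigl(\bu_h^k-\bg,\bu_h^k-\bu_h^{k-1}\bigr)\ge \tfrac\lam2\norm{\bu_h^k-\bg}{L^2}^2-\tfrac\lam2\norm{\bu_h^{k-1}-\bg}{L^2}^2.
\]
For the penalization terms, convexity of $W_+$ gives $W_+'(\bu_h^k)\cdot(\bu_h^k-\bu_h^{k-1})\ge W_+(\bu_h^k)-W_+(\bu_h^{k-1})$, while \emph{concavity of} $-W_-$ (i.e.\ convexity of $W_-$) gives $-W_-'(\bu_h^{k-1})\cdot(\bu_h^k-\bu_h^{k-1})\ge -W_-(\bu_h^k)+W_-(\bu_h^{k-1})$; adding these and using $F=W_+-W_-$ yields $\frac1\del\bigl(W_+'(\bu_h^k)-W_-'(\bu_h^{k-1}),\bu_h^k-\bu_h^{k-1}\bigr)\ge L^\del(\bu_h^k)-L^\del(\bu_h^{k-1})$. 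Dividing through by $\tau$ and assembling, one obtains
\[
\norm{\p_t\bu_h^k}{L^2}^2 + \frac1\tau\Bigl(J_{p,\lam}^{\vepsi,\del}(\bu_h^k)-J_{p,\lam}^{\vepsi,\del}(\bu_h^{k-1})\Bigr)\le 0.
\]
Multiplying by $\tau$, summing over $k=1,\dots,\ell$, and telescoping gives \reff{e7.5} (in fact with constant $1$ rather than $\tfrac12$ in front of the sum; the stated $\tfrac12$ is a harmless weakening, so I would either record the sharper bound or simply note the inequality holds a fortiori).

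The main obstacle is the convex/concave splitting bookkeeping in the penalization step: one must be careful that the scheme \reff{e7.2} treats $W_+'$ \emph{implicitly} (at level $k$) and $W_-'$ \emph{explicitly} (at level $k-1$), and that it is exactly this choice together with the convexity of both $W_+$ and $W_-$ that makes the telescoping work with the correct signs — an explicit treatment of $W_+$ or implicit treatment of $W_-$ would break the estimate. Everything else (strict convexity/coercivity for existence, the convexity inequalities for the gradient and fidelity terms) is routine, so I would present the penalization telescoping carefully and treat the remaining pieces briskly.
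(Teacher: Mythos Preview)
Your proof is correct. The existence/uniqueness argument is essentially identical to the paper's: both recognize \reff{e7.2}--\reff{e7.3} as the Euler--Lagrange equation of the strictly convex, coercive functional you call $\Psi$ (the paper calls it $G_k$) and invoke the direct method.

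For the energy estimate the two proofs differ. The paper does not test the scheme with $\p_t\bu_h^k$; instead it uses the single minimality inequality $G_k(\bu_h^k)\le G_k(\bu_h^{k-1})$, which already encodes the $E_p^\vepsi$, fidelity, and $W_+$ contributions, and then applies convexity of $W_-$ once to convert the remaining linear term $-\frac1\del(W_-'(\bu_h^{k-1}),\bu_h^k-\bu_h^{k-1})$ into $-\frac1\del[W_-(\bu_h^k)-W_-(\bu_h^{k-1})]$. This yields $\frac{\tau}{2}\norm{\p_t\bu_h^k}{L^2}^2 + J_{p,\lam}^{\vepsi,\del}(\bu_h^k)-J_{p,\lam}^{\vepsi,\del}(\bu_h^{k-1})\le 0$, where the factor $\tfrac12$ comes from the quadratic $\frac{1}{2\tau}\norm{\cdot-\bu_h^{k-1}}{L^2}^2$ in $G_k$ evaluated at $\bu_h^{k-1}$ versus $\bu_h^k$. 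Your approach, testing with $\p_t\bu_h^k$ and applying convexity term-by-term, mirrors the continuous energy-law derivation of Lemma~\ref{lem3.1} and produces the sharper constant $1$ in front of $\norm{\p_t\bu_h^k}{L^2}^2$, as you observe. The paper's route is shorter (one inequality plus one convexity step), while yours is more transparent about which structural ingredient controls each term and explains why the stated $\tfrac12$ is not sharp.
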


\begin{proof}
For each fixed $k\geq 1$, it is easy to check that \reff{e7.2}-\reff{e7.3} is
the Euler-Lagrange equation of the following functional over $\bV^h$
\begin{align}\label{e7.6}
G_k(\bv)&:=\int_\Ome\Bigl\{ \frac{1}{2\tau} |\bv-\bu_h^{k-1}|^2
+\frac{b_p(\vepsi)}2\, |\nab\bv|^2 +\frac{1}{p}\, |\nab
\bv|_\vepsi^p +\frac{\lam}2|\bv-\bg|^2 \\
&\hskip 1.4in
+\frac{1}{\del} W_{+}(\bv) \,\Bigr\}\,dx -\frac{1}{\del}\,
\int_\Ome\,W_{-}^\prime(\bu_h^{k-1})\cdot\bv\, dx. \nonumber
\end{align}
Since $G_k$ is a convex, coercive and differentiable functional,
then it has a unique minimizer $\bu_h^k\in\bV^h$ (cf. \cite{S3}),
hence, \reff{e7.2}-\reff{e7.3} has a unique solution.

Since $\bu_h^k$ is the minimizer of $G_k$ over $\bV^h$, we have that
\begin{equation}\label{e7.7}
G_k(\bu_h^k)\leq G_k(\bu_h^{k-1}).
\end{equation}
It follows from the convexity of $W_{-}$ that
\[
W_{-}^\prime(\bu^{k-1}_h)\bigl( \bu^k_h-\bu^{k-1}_h \bigr) \leq
W_{-}(\bu^k_h)-W_{-}(\bu^{k-1}_h) .
\]
This and \reff{e7.7} imply that
\[
\frac{1}2\norm{\p_t \bu_h^k}{L^2}^2 
+ \frac{ J_{p,\lam}^{\vepsi,\del}(\bu_h^k)- J_{p,\lam}^{\vepsi,\del}(\bu_h^{k-1})}{\tau} \leq 0.
\]
The bound \reff{e7.5} then follows from applying the summation
operator $\tau\sum_{k=1}^\ell\, (1\leq \ell \leq L)$ to the last
inequality. Hence the proof is complete.
\end{proof}

\subsection{Convergence analysis}\label{sec-7.2}
The goal of this subsection is to show that the numerical solution
of \reff{e7.2}-\reff{e7.3} converges to the unique weak solution of
\reff{e1.18}-\reff{e1.20} as $h, \tau\rightarrow 0$. There are two
approaches to reach this goal. The first approach assumes the
existence of the solution of \reff{e1.18}-\reff{e1.20}, which in
fact has been proved in Theorem \ref{thm3.2}, and then proves
that $\bu_h^k$ converges to that solution. The other approach shows the
convergence {\em without} assuming the existence of the solution
of \reff{e1.18}-\reff{e1.20}. This can be done by applying the
energy method and compactness argument used in the proof of
Theorem \ref{thm3.2} to the finite element solution $\{\bu_h^k\}$.
In the following, we shall go with the latter approach since this
will also provide an alternative proof for Theorem \ref{thm3.2} as
alluded to in Remark \ref{rem3.1} (c).

For the fully discrete finite element solution $\{\bu_h^k\}$, we
define its linear interpolation in $t$ as follows
\begin{equation}
\bU^{\vepsi,\del,h,\tau}(\cdot,t):= \frac{t-t_{k-1}}{\tau} \bu_h^{k}(\cdot)
+ \frac{t_{k}-t}{\tau} \bu_h^{k-1}(\cdot)\quad \forall \, t\in [t_{k-1},t_{k}],
\quad 1\leq k\leq L.  \label{e7.10}
\end{equation}
Clearly, $\bU^{\vepsi,\del,h,\tau}$ is continuous in both $x$ and $t$.

The main result of this section is the following convergence theorem.

\begin{theorem}\label{thm7.2}
For $1\leq p<\infty$, suppose that $\bu_0\in 
W^{1,p^*}(\Ome, \mathbf{R}^n)$, $|\bu_0|= 1$ 
and $|\bg|\leq 1$ in $\Ome$.  For each pair of positive 
numbers $(\vepsi,\del)$, let $\bU^{\vepsi,\del,h,\tau}$ be defined 
by \reff{e7.10}. Then, there exists $\bu^{\vepsi,\del}\in L^\infty(\Ome_T)$ 
such that
\begin{equation}\label{e7.8}
\lim_{h,\tau\rightarrow 0}\,
\norm{\bu^{\vepsi,\del}-\bU^{\vepsi,\del,h,\tau}} {L^q(\Ome_T)} =
0\qquad \forall q\in [1,\infty),
\end{equation}
provided that 
\[
\lim_{h\rightarrow 0}\ \norm{\bu_0-\bu_h^0}{W^{1,p^*}(\Ome)} =0.
\]
Moreover, $\bu^{\vepsi,\del}$ solves \reff{e1.18}-\reff{e1.20}
in the sense of Definition \ref{def3.1}.
\end{theorem}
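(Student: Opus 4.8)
The plan is to follow the energy-method-plus-compactness strategy already used in the proof of Theorem~\ref{thm3.2}, but applied directly to the fully discrete interpolant $\bU^{\vepsi,\del,h,\tau}$, so that the argument simultaneously yields convergence and (by the uniqueness statement in Theorem~\ref{thm3.2}) identification of the limit. First I would rewrite scheme \reff{e7.2}--\reff{e7.3} in terms of the piecewise-linear interpolant $\bU:=\bU^{\vepsi,\del,h,\tau}$ and the associated piecewise-constant step functions $\bU^+$, $\bU^-$ (taking values $\bu_h^k$ on $(t_{k-1},t_k]$ and $\bu_h^{k-1}$ respectively); note that $\bU_t=\p_t\bu_h^k$ on each subinterval, so \reff{e7.2} becomes a single space-time variational identity for $(\bU,\bU^+,\bU^-)$ tested against piecewise-constant-in-time, $\bV^h$-valued functions. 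The discrete energy estimate \reff{e7.5} gives, uniformly in $h,\tau$ (using the hypothesis $\|\bu_0-\bu_h^0\|_{W^{1,p^*}(\Ome)}\to 0$, which bounds $J_{p,\lam}^{\vepsi,\del}(\bu_h^0)$), the bounds $\norm{\bU^+}{L^\infty(W^{1,p^*})}\le C$, $\norm{\bU_t}{L^2(L^2)}\le C$, and $\norm{\nab\bU}{L^\infty(L^2)}\le C$ via the $b_p(\vepsi)$ term; moreover $\norm{\bU-\bU^+}{L^2(L^2)}^2\le C\tau^2\norm{\bU_t}{L^2(L^2)}^2\to 0$, so $\bU$, $\bU^+$, $\bU^-$ share the same weak-$*$ limit.

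Next I would extract a subsequence with $\bU^+\rightharpoonup \bu^{\vepsi,\del}$ weak-$*$ in $L^\infty((0,T);W^{1,p^*})$ and $\bU_t\rightharpoonup \bu^{\vepsi,\del}_t$ weakly in $L^2(L^2)$; since the operator $-\Del_p^\vepsi$ is uniformly elliptic, a discrete analogue of the Aubin--Lions/compactness argument (Lemma~\ref{lem2.2} with $\vepsi$ fixed, or Theorem~2.1 of \cite{CHH}) applied to the right-hand side $\bbf_h:=-\frac1\del W_+'(\bU^+)+\frac1\del W_-'(\bU^-)-\lam(\bU^+-\bg)$, which is uniformly bounded in $L^1(L^1)$, gives strong convergence $\nab\bU^+\to\nab\bu^{\vepsi,\del}$ in $L^q(L^q)$ for all $q<p^*$, hence $\bU^+\to\bu^{\vepsi,\del}$ a.e.\ in $\Ome_T$, and by the uniform $L^\infty$ bound also in every $L^q(\Ome_T)$, $q<\infty$. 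This establishes \reff{e7.8}. The maximum principle $|\bu^{\vepsi,\del}|\le 1$ passes to the limit from the a.e.\ convergence once we verify it at the discrete level; this follows by testing \reff{e7.2} with the discrete analogue of $\bu_h^k\chi(|\bu_h^k|)$ as in Lemma~\ref{lem3.2}, the point being that $W_+'$, $W_-'$ were chosen convex so that the penalty terms still contribute with the right sign (this is exactly why the convex splitting \reff{e7.1a} is used).

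To identify $\bu^{\vepsi,\del}$ as the weak solution in the sense of Definition~\ref{def3.1}, I would fix a test function $\bv\in C^1(\overline\Ome_T)$, let $\bv_h$ be its $\bV^h$-interpolant (or Scott--Zhang projection) and $\bv_h^k:=\bv_h(\cdot,t_k)$, sum \reff{e7.2} in $k$ against $\bv_h^k$, and pass to the limit: the linear terms are routine, the $W_+',W_-'$ penalty terms recombine via $F'=W_+'-W_-'$ and the strong $L^q$ convergence of $\bU^\pm$ to give $\frac1\del(|\bu^{\vepsi,\del}|^2-1)\bu^{\vepsi,\del}$, and $\nab\bu_h^k\to\nab\bv$ strongly. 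The only delicate term is the flux $\cB_h^k=[b_p(\vepsi)+|\nab\bu_h^k|_\vepsi^{p-2}]\nab\bu_h^k$, where a priori we have only weak convergence in $L^\infty(L^{p'_*})$; this is the main obstacle, and I would overcome it by the decisive monotonicity trick (Lemma~\ref{lem2.1}) applied to the uniformly elliptic, monotone, hemicontinuous operator $\Del^\vepsi_p$ on $H^1(\Ome)$ (valid for every $1\le p<\infty$ precisely because of the $b_p(\vepsi)\Del$ regularization), using the discrete energy inequality \reff{e7.5} to supply the required lim-sup bound $\overline{\lim}\int_0^T\langle\Del^\vepsi_p\bU^+,\bU^+\rangle\le \int_0^T\langle f,\bu^{\vepsi,\del}\rangle$ — indeed strong convergence of $\nab\bU^+$ already established above makes this step immediate, so one can identify the weak limit of $\cB_h^k$ with $b_p(\vepsi)\nab\bu^{\vepsi,\del}+|\nab\bu^{\vepsi,\del}|_\vepsi^{p-2}\nab\bu^{\vepsi,\del}$ directly. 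Finally, the initial condition is recovered from $\bU(\cdot,0)=\bu_h^0\to\bu_0$ in $W^{1,p^*}$ together with $\bU_t\in L^2(L^2)$ giving continuity in time into $L^2$, and the whole sequence converges (not just a subsequence) because the weak solution of \reff{e1.18}--\reff{e1.20} is unique by Theorem~\ref{thm3.2}.
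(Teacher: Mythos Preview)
Your overall strategy---discrete energy estimate \reff{e7.5}, extraction of weak limits, compactness via the uniform ellipticity of $-\Del^\vepsi_p$ (Lemma~\ref{lem2.2}), and identification of the flux by Minty's trick---is precisely the route the paper takes (it simply defers to Theorem~1.5 of \cite{CHH} and notes that the $b_p(\vepsi)\Del$ term makes the argument work for $1\le p<2$ as well). So the skeleton is right.

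There is, however, a genuine gap in your handling of the maximum principle. You propose to test \reff{e7.2} with ``the discrete analogue of $\bu_h^k\chi(|\bu_h^k|)$''. But $\bu_h^k\chi(|\bu_h^k|)\notin\bV^h$ in general (composition with the nonlinear cut-off $\chi$ destroys the piecewise-polynomial structure), so it is not an admissible test function in the Galerkin scheme. No simple interpolation fix works either, because the cancellations in the proof of Lemma~\ref{lem3.2} rely on the exact pointwise form of that test function. Your side remark that ``$W_+'$, $W_-'$ were chosen convex so that the penalty terms still contribute with the right sign'' compounds the confusion: it is $W_\pm$, not $W_\pm'$, that are convex, and the purpose of the convex splitting \reff{e7.1a} is to secure the energy inequality in Theorem~\ref{thm7.1}, not a maximum principle.

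This matters because you then invoke a ``uniform $L^\infty$ bound'' to upgrade the strong convergence to every $L^q(\Ome_T)$, $q<\infty$, as required in \reff{e7.8}. Without a discrete $L^\infty$ bound you only have $\bU^+\in L^\infty((0,T);L^4(\Ome))$ from the penalty term in \reff{e7.5}, which by interpolation with the strong $L^2$ (or $L^q$, $q<p^*$) convergence gives \reff{e7.8} only for $q$ below a finite threshold. The cleanest repair is to \emph{not} attempt a discrete maximum principle: first pass to the limit and identify $\bu^{\vepsi,\del}$ as a weak solution satisfying items (i) and (iii) of Definition~\ref{def3.1} (the $L^4$ bound suffices for all the nonlinear terms), then apply Lemma~\ref{lem3.2} at the continuous level to obtain $|\bu^{\vepsi,\del}|\le1$. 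For the full range of $q$ in \reff{e7.8} you must then argue separately---e.g.\ via higher Sobolev embeddings of $W^{1,p^*}$ when $p^*\ge m$, or else accept a restricted range and appeal to uniqueness (Theorem~\ref{thm3.2}) to conclude.
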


\begin{proof}
The proof follows the same lines as that of Theorem 1.5 of \cite{CHH},
where the convergence of a general Galerkin approximation was proved for the
case $p\geq 2$. Since the finite element approximation is a special
Galerkin approximation, the proof of Theorem 1.5 of \cite{CHH}
can easily be adapted to the finite element approximation $\bU^{\vepsi,\del,h,\tau}$
for $p\geq 2$ thanks to the discrete energy estimate \reff{e7.5} and
the facts that $\p_t\bu_h^k=\bU^{\vepsi,\del,h,\tau}_t$ and
$\tau\sum_{k=1}^L \norm{\p_t\bu_h^k}{L^2}^2=\norm{\bU^{\vepsi,\del,h,\tau}_t}
{L^2(L^2)}$.

Since the operator $-\Del^\vepsi_p$ is {\em uniformly} elliptic, as a result,
the compactness of Lemma \ref{lem2.2} not only holds for $p\geq 2$ but
also for $1\leq p<2$. In addition, note that $\bU^{\vepsi,\del,h,\tau}$ is
uniformly (in $h$ and $\tau$) bounded in $L^\infty((0,T);H^1(\Ome,\bR^n))$ for
$1\leq p<2$. Hence, the proof of Theorem 1.5 of \cite{CHH} can be
adapted with slight modifications to prove \reff{e7.8} for the case
$1\leq p<2$.
\end{proof}

\begin{remark}
Several practical choices of $\bu_h^0$ are possible. For instance,
both the $L^2$-projection of $\bu_0$ and the Clem\'ent finite
element interpolation of $\bu_0$ into $\bV^h$ (cf. \cite{Ci}) are
qualified candidates for $\bu_h^0$.
\end{remark}

An immediate consequence of Theorems \ref{thm4.1}, \ref{thm5.1}, 
\ref{thm6.1}, \ref{thm7.1}, and \ref{thm7.2} is the following convergence theorem.

\begin{theorem}\label{thm7.3}
Let $1\leq p<\infty$ and $\bU^{\vepsi,\del,h,\tau}$ be defined 
by \reff{e7.10}, assume the assumptions of Theorems \ref{thm7.1}, \ref{thm7.2}, 
\ref{thm4.1}, \ref{thm5.1} and
\ref{thm6.1} hold. Then, there exists a subsequence of 
$\{\bU^{\vepsi,\del,h,\tau}\}$ (still denoted by the same notation)
and a weak solution $\bu$ of \reff{e1.8}-\reff{e1.11} such that 
\begin{equation}\label{e7.9}
\lim_{\vepsi,\delta\rightarrow 0,}\,\lim_{h,\tau\rightarrow 0,}\, 
\norm{\bu-\bU^{\vepsi,\del,h,\tau}} {L^q(\Ome_T)} =
0\qquad \forall q\in [1,\infty).
\end{equation}
\end{theorem}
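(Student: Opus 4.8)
The plan is to combine the finite element convergence result of Theorem~\ref{thm7.2}, which for each \emph{fixed} pair $(\vepsi,\del)$ produces the weak solution $\bu^{\vepsi,\del}$ of the regularized flow \reff{e1.18}-\reff{e1.20}, with the limiting arguments of Theorems~\ref{thm4.1}, \ref{thm5.1}, and \ref{thm6.1}, which send $\del\to 0$ and then $\vepsi\to 0$. The essential point is that all the a priori bounds are uniform across both limits, so that one may extract a single diagonal subsequence along which $\bU^{\vepsi,\del,h,\tau}$ converges to a weak solution $\bu$ of \reff{e1.8}-\reff{e1.11}. First I would fix the starting values $\bu_h^0$ so that $\norm{\bu_0-\bu_h^0}{W^{1,p^*}}\to 0$ and also $J_{p,\lam}^{\vepsi,\del}(\bu_h^0)\le c_0$ uniformly in $\vepsi,\del,h$ (this is possible since $|\bu_0|=1$ gives $L^\del(\bu_0)=0$ and $E_p^\vepsi(\bu_0)\le E_p^1(\bu_0)$, exactly as in the proof of Theorem~\ref{thm4.1}). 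Then the discrete energy estimate \reff{e7.5} is uniform in all parameters, and hence so are the discrete analogues of \reff{e3.2}-\reff{e3.5}.

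The argument then proceeds in two stages. In the first stage, for each fixed $(\vepsi,\del)$, Theorem~\ref{thm7.2} gives $\bU^{\vepsi,\del,h,\tau}\to\bu^{\vepsi,\del}$ in $L^q(\Ome_T)$ for all $q\in[1,\infty)$ as $h,\tau\to 0$, where $\bu^{\vepsi,\del}$ is the weak solution of \reff{e1.18}-\reff{e1.20} from Theorem~\ref{thm3.2}; moreover the energy inequality \reff{e3.9} and the estimates \reff{e3.2}-\reff{e3.5} pass to the limit (using lower semicontinuity and Fatou's lemma) so that $\bu^{\vepsi,\del}$ satisfies them with constants independent of $\vepsi,\del$. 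In the second stage I would invoke the limiting procedures already carried out in the paper: first let $\del\to 0$ as in Theorem~\ref{thm4.1} to obtain $\bu^\vepsi$ solving \reff{e4.2}-\reff{e4.4} together with the estimates \reff{e4.6a}-\reff{e4.8b}; then let $\vepsi\to 0$, using Theorem~\ref{thm5.1} for $1<p<\infty$ and Theorem~\ref{thm6.1} for $p=1$, to obtain a weak solution $\bu$ of \reff{e1.8}-\reff{e1.11} (in the sense of Definition~\ref{def5.1} or Definition~\ref{def6.1}, respectively). A routine diagonal-subsequence extraction over a sequence of parameters $(\vepsi_j,\del_j,h_j,\tau_j)\to 0$, together with the metrizability of the relevant weak-$*$ topologies on bounded sets, then yields a single subsequence along which \reff{e7.9} holds.

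The main obstacle is purely bookkeeping rather than conceptual: one must verify that the iterated limit $\lim_{\vepsi,\del\to 0}\lim_{h,\tau\to 0}$ can genuinely be realized along one diagonal subsequence, i.e.\ that the modes of convergence are compatible. Concretely, for each $j$ one chooses $h_j,\tau_j$ small enough (depending on $\vepsi_j,\del_j$) so that $\norm{\bu^{\vepsi_j,\del_j}-\bU^{\vepsi_j,\del_j,h_j,\tau_j}}{L^q(\Ome_T)}\le 1/j$; since $\bu^{\vepsi_j,\del_j}\to\bu$ in the appropriate sense along the limits already established in Theorems~\ref{thm4.1}-\ref{thm6.1}, a triangle inequality closes the argument. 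Care is needed in the case $p=1$, where the limit object lives in $L^\infty((0,T);[BV(\Ome)]^n)$ and the convergence $\bU^{\vepsi,\del,h,\tau}\to\bu$ is only in $L^q(\Ome_T)$ (strong $L^1$ in space), which is nonetheless exactly the topology in which \reff{e7.9} is asserted; no stronger gradient convergence is claimed, so no additional difficulty arises there. This completes the plan.
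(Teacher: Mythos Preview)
Your proposal is correct and matches the paper's approach exactly: the paper presents Theorem~\ref{thm7.3} simply as ``an immediate consequence of Theorems~\ref{thm4.1}, \ref{thm5.1}, \ref{thm6.1}, \ref{thm7.1}, and \ref{thm7.2}'' without giving any further proof, and your write-up supplies precisely the chaining of those results (first $h,\tau\to 0$ via Theorem~\ref{thm7.2}, then $\del\to 0$ via Theorem~\ref{thm4.1}, then $\vepsi\to 0$ via Theorem~\ref{thm5.1} or \ref{thm6.1}) that the paper leaves implicit. Your additional remarks on the diagonal extraction and the uniformity of the energy bound on $\bu_h^0$ are reasonable elaborations, though strictly speaking the iterated limit in \reff{e7.9} does not require the diagonal argument.
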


\vskip .2in
{\bf Acknowledgment:} 
The authors would like to thank the Mathematisches \hfill\newline 
Forschungsinstitut 
Oberwolfach for the kind hospitality and opportunity of its ``Research in Pairs" 
program. The second author would also like to thank Professor Fanghua Lin
for a helpful discussion, and to thank the Institute of Mathematics and its 
Applications (IMA) of University of Minnesota for its support and hospitality 
during the author's recent visit to the IMA.


\end{document}